\documentclass[a4paper,english,11pt]{smfart}
\usepackage[dvips]{graphicx}
\usepackage{amssymb}
\usepackage{amsmath}
\usepackage[utf8]{inputenc}
\usepackage[T1]{fontenc}
\usepackage[francais]{babel}
\usepackage{graphicx}
\usepackage{amsmath}
\usepackage{amsthm}
\usepackage{amsfonts}
\usepackage{amssymb}
\usepackage{enumerate}
\usepackage{vaucanson-g}
\usepackage{graphicx}
\usepackage{setspace}
\frenchbsetup{StandardLists=true}
\usepackage{enumitem}

\newtheorem{thm}{Theorem}[section]
\newtheorem{prop}[thm]{Proposition}
\newtheorem{lem}[thm]{Lemma}
\newtheorem{coro}[thm]{Corollary}

\theoremstyle{definition}
\newtheorem{defi}[thm]{Definition}

\theoremstyle{remark} 
\newtheorem{rem}[thm]{Remark}
\theoremstyle{definition}

\newcounter{constant}
\newcommand{\newconstant}[1]{\refstepcounter{constant}\label{#1}}
\newcommand{\useconstant}[1]{c_{\ref{#1}}}
\newcommand{\defconstant}[1]{ \newconstant{c_{#1}}\expandafter\newcommand\csname c#1\endcsname{\useconstant{c_{#1}}} }  
\newcounter{Constant}
\newcommand{\newConstant}[1]{\refstepcounter{Constant}\label{#1}}
\newcommand{\useConstant}[1]{C_{\ref{#1}}}
\newcommand{\defConstant}[1]{ \newConstant{C_{#1}}\expandafter\newcommand\csname C#1\endcsname{\useConstant{C_{#1}}} }

\newcommand{\mL}{{\mathcal L}}

\newcommand{\Q}{\overline{\mathbb Q}}

\newcommand{\y}{{\boldsymbol{y}}}
\newcommand{\C}{\mathbb{C}}
\newcommand{\Z}{\mathbb{Z}}

\newcommand{\N}{\mathbb{N}}
\newcommand{\R}{\mathcal R}
\newcommand{\M}{\mathcal M}

\newcommand{\U}{\overline U}
\newcommand{\K}{\mathcal K}
\newcommand{\A}{\mathcal A}

\newcommand{\J}{\mathcal J}

\newcommand{\E}{\mathcal E}

\newcommand{\be}{{\boldsymbol{e}}}
\newcommand{\x}{{\boldsymbol{x}}}

\newcommand{\bK}{{\rm \bf{K}}}

\newcommand{\z}{{\boldsymbol{z}}}
\newcommand{\lambd}{{\boldsymbol{\lambda}}}
\newcommand{\f}{{\boldsymbol f}}

\renewcommand{\S}{\mathcal{S}}
\renewcommand{\k}{{\boldsymbol{k}}}
\renewcommand{\j}{{\boldsymbol{j}}}

\newcommand{\balpha}{{\boldsymbol{\alpha}}}
\newcommand{\X}{\boldsymbol{X}}
\newcommand{\bt}{\boldsymbol{t}}
\newcommand{\Y}{\boldsymbol{Y}}

\newcommand{\omeg}{{\boldsymbol{\omega}}}
\newcommand{\bet}{{\boldsymbol{\eta}}}

\newcommand{\bmu}{{\boldsymbol{\mu}}}
\newcommand{\bnu}{{\boldsymbol{\nu}}}

\newcommand{\w}{{\boldsymbol{w}}}

\newcommand{\bpsi}{{\boldsymbol{\psi}}}
\newcommand{\bphi}{{\boldsymbol{\phi}}}
\newcommand{\bchi}{{\boldsymbol{\chi}}}
\newcommand{\gamm}{{\boldsymbol{\gamma}}}

\newcommand{\kapp}{{\boldsymbol{\kappa}}}
\newcommand{\btau}{{\boldsymbol{\tau}}}
\newcommand{\bomega}{{\boldsymbol{\omega}}}
\newcommand{\bbeta}{{\boldsymbol{\beta}}}

\newcommand{\p}{{\mathfrak{p}}}

\newcommand{\g}{{\boldsymbol g}}

\newcommand{\vphi}{\varphi}
\newcommand{\bbe}{{\overline{\be}}}
\newcommand{\kk}{{\overline{\k}}}
\numberwithin{equation}{section} 

\title[Mahler's method in several variables I]{Mahler's method in several variables I: The theory of  regular singular systems}

\author{Boris Adamczewski}
\address{
Univ Lyon, Universit\'e Claude Bernard Lyon 1\\
 CNRS UMR 5208, Institut Camille Jordan \\
 F-69622 Villeurbanne Cedex, France}
\email{Boris.Adamczewski@math.cnrs.fr}

\author{Colin Faverjon}
\email{colin.faverjon@riseup.net}
\date{}

\thanks{This project has received funding from the European Research Council (ERC) under the European Union's Horizon 2020 research and innovation programme 
under the Grant Agreement No 648132. }
\begin{document}

\begin{abstract}  
This is the first part of a work devoted to the study of linear Mahler systems in several variables from the 
perspective of transcendence and algebraic independence.  
We prove two main results concerning systems that are regular singular at the origin.   

Given some vector of analytic solutions of 
such a Mahler system, say $(f_1(\z),\ldots,f_m(\z))\in \Q\{\z\}^m$, and some suitable algebraic point $\balpha$, we first show that 
any homogeneous algebraic relation over $\Q$ between the complex numbers $f_1(\balpha),\ldots,f_m(\balpha)$  
can be lifted to a similar algebraic relation over $\Q(\z)$ between the functions $f_1(\z),\ldots,f_m(\z)$.   
This phenomenon was first brought to light in the framework of linear differential equations by Nesterenko and Shidlovskii, Beukers, 
and Andr\'e. More recently, Philippon and the authors also established a similar result for linear Mahler systems in one variable. 

Our second main result highlights the fact that the values of Mahler functions associated with sufficiently different matrix 
transformations behave independently.  
More precisely, we show that the ideal formed by the algebraic relations between the values at algebraic points of Mahler functions associated 
with different systems is generated by the pure algebraic relations, that is, the algebraic relations  between the values of the  
functions occurring in each system. Though results in the same spirit were conjectured by van der Poorten in the 1980s, 
only very sporadic examples, due independently to Ku. Nishioka and Masser, have been obtained so far. 

Our interest in Mahler's method comes from the possible applications of these results 
to old problems involving automata theory and which concern the expansion of both natural numbers and real numbers   
in integer bases. In particular, problems which involve finite automata and base change.  
Such applications are studied in the companion paper \cite{AF3}.  
\end{abstract}

\bibliographystyle{abbvr}
\maketitle
\setcounter{tocdepth}{2}
\tableofcontents

\section{Introduction}\label{Introduction}

Any non-trivial algebraic (resp., linear) relation over  
$\overline{\mathbb Q}(z)$ between given analytic functions $f_1(z),\ldots,f_n(z)\in \overline{\mathbb Q}\{z\}$,   
leads by specialization at a given algebraic point $\alpha$ to a non-trivial algebraic (resp., linear) relation over 
$\overline{\mathbb Q}$ between the complex numbers $f_1(\alpha),\ldots,f_n(\alpha)$, assuming that these functions 
are well-defined at $\alpha$. 
As discussed in \cite{Ma1969b}, we cannot expect the converse assertion to be true in general, 
but there are a few known instances where it holds true.     
In each case, some additional structure is required and   
the analytic functions under consideration must satisfy some functional equations, 
such as  a system of linear differential equations or of linear difference equations.

Mahler's equations provide an example of such a framework.  
Let $f_1(\z),\ldots,f_m(\z)\in \overline{\mathbb Q}\{\z\}$ be multivariate analytic functions 
which converge in 
some neighborhood of the origin and which are related by a system of functional equations of the form   
\eqref{eq:mahler}. 
Let $\balpha\in \Q^n$ be such that these functions are well-defined at $\balpha$.   
 Mahler's method aims at transferring results about the absence of algebraic (resp., linear) relations over 
$\overline{\mathbb Q}(\z)$ between the functions $f_1(\z),\ldots,f_m(\z)$  to the absence of algebraic 
(resp., linear) relations over $\overline{\mathbb Q}$ between the complex numbers 
$f_1(\balpha),\ldots,f_m(\balpha)$. This problem goes back to the pioneering work of 
Mahler \cite{Ma29,Ma30a,Ma30b} at the end of the 1920s. In fact,  
a large part of transcendental number theory is concerned with similar questions.   

Throughout this paper, we focus on the so-called \emph{regular singular} linear Mahler systems 
introduced in Definition \ref{def: regular singular}, that is, 
those which  are conjugated, through 
a ramified meromorphic gauge transform, to a system associated with a constant invertible matrix.  
We develop a general theory for these systems 
from the perspective of transcendence and algebraic independence. 
Our interest in Mahler's method comes from the possible applications 
of these results to old problems concerning the expansion of both natural and real numbers in  
integer bases. In particular, old problems which involve finite automata and base change.  
Such applications will be discussed in the companion paper \cite{AF3}.  
Unfortunately, the fact that our results are restricted to regular singular systems affects the generality of 
their application to these problems. In this regard, it would be of great interest to extend the results of this paper   
to the case of general linear Mahler systems.  

\subsection{Mahler's transformations and linear Mahler systems}

Let $n$ be a positive integer and $T=(t_{i,j})_{1\leq i,j\leq n}$ be an $n\times n$ matrix with non-negative 
integer coefficients. We let $T$ act on $\mathbb C^n$ by  
$$
T \balpha = (\alpha_1^{t_{1,1}}\alpha_2^{t_{1,2}}\cdots \alpha_n^{t_{1,n}},\ldots,\alpha_1^{t_{n,1}}\alpha_2^{t_{n,2}}\cdots \alpha_n^{t_{n,n}})\, , 
$$ 
where $\balpha=(\alpha_1,\ldots,\alpha_n)$. In order to avoid confusion, 
we use $T(\balpha)$ to denote the usual matrix product.  
We will also consider $T$ as acting on monomials 
associated with a $n$-tuple of indeterminates $\z=(z_1,\ldots,z_n)$.  
We let  $\Q$ denote the field of algebraic numbers which embeds into the field $\mathbb C$ 
of complex numbers.  We let $\Q^\star$ denote the set $\Q\setminus \{0\}$. 
A \emph{linear $T$-Mahler system}, or simply a \emph{Mahler system}, is a system of functional equations of the form 

\begin{equation}
\label{eq:mahler}
\left(\begin{array}{c} f_1(T\z) \\ \vdots \\ f_m(T\z) \end{array} \right) = 
A(\z)\left(\begin{array}{c} f_1(\z) \\ \vdots \\ f_m(\z) 
\end{array} \right)\, ,
\end{equation}
where $A(\z)\in{\rm GL}_m(\Q(\z))$. 

Let $\mathbb K$ be a subfield of the complex numbers. 
We let $\mathbb K\{\z\}$ denote the set of multivariate power series in $\z$, with coefficients in $\mathbb K$, 
and which are convergent in some neighborhood of $\boldsymbol 0$.   
In the sequel, we refer to the elements of $\mathbb K\{\z\}$ as being \emph{analytic}, and to 
the elements of $\widehat\bK_1$, the field of fractions of $\mathbb \Q\{\z\}$, as being \emph{meromorphic}. 
More generally, given a positive integer $d$, we let $\widehat\bK_d$ denote the field of fractions of $\Q\{\z^{1/d}\}$, where 
$\z^{1/d}=(z_1^{1/d},\ldots,z_n^{1/d})$. 
We also set 
$$
\widehat\bK:= \cup_{d\geq 1} \widehat\bK_d\, .
$$
We define now the regular singular Mahler systems.

\begin{defi}
\label{def: regular singular}
A system of the form \eqref{eq:mahler} is said to be \emph{regular singular} at the origin, 
or for short regular singular, 
if there exists a 
matrix $\Phi(\z) \in {\rm GL}_m(\widehat\bK)$ 
such that  $\Phi(T\z)A(\z)\Phi^{-1}(\z) \in {\rm GL}_m(\Q)$.
\end{defi}

A similar definition in the case of linear Mahler systems in one variable 
can be found in \cite{Ro}. According to Loxton and van der Poorten \cite{LvdP82}, 
if the matrix $A(\z)$ is well-defined and non-singular at $\boldsymbol 0$, then the corresponding 
Mahler system is regular singular. 

\subsection{Previous results} 

A well-known feature of Mahler's method is that, 
independently of the choice of the matrix $A(\z)$ defining the system \eqref{eq:mahler}, 
some quite natural restrictions on the transformation matrix $T$ and 
on the point $\balpha$ are required. Such conditions already appeared in the 
work of Mahler.

\begin{defi}\label{def:admissible}
Let $T$ be an $n\times n$ matrix with non-negative integer coefficients and $\balpha\in(\Q^\star)^n$. 
The pair $(T,\balpha)$ is said to be \emph{admissible} if there exist two real numbers $\rho>1$ and $c>0$ such that 
the following three conditions hold true. 

\begin{enumerate}[label=(\alph*)]
\item \label{condition: A} 
The entries of the matrix $T^k$ are bounded by $c\rho^k$, for every positive integer $k$.  

\item \label{condition: B} Set $T^k{\boldsymbol \alpha}:=(\alpha_1^{(k)},\ldots,\alpha_n^{(k)})$. 
Then 
$$
\log \vert \alpha_i^{(k)}\vert \leq -c\rho^k\, ,
$$
for all positive integers $k$ and all integers $i$, $1\leq i\leq n$.  

\item \label{condition: C} If $f({\z})$ is any non-zero element of $\mathbb C\{{\z}\}$, then there 
are infinitely many integers $k$ such that $f(T^k\boldsymbol \alpha)\not=0$. 
\end{enumerate}
\end{defi}

\begin{rem} When $n=1$, the operator $T$ takes the simple 
form $z\mapsto z^q$, where $q\geq 2$ is an integer. In that case, it is easy to check that Conditions (a)--(c) 
are always satisfied  for every algebraic number $\alpha$ with $0<\vert \alpha\vert <1$.   
In particular, the non-vanishing Condition (c) follows immediately from the identity theorem.
\end{rem}

In addition to the admissibility of the pair $(T, {\boldsymbol \alpha})$, a further restriction has to be 
imposed on the point $\balpha$, 
namely that it be a regular point. 
The latter depends both on the matrices $A(\z)$ and $T$. 

\begin{defi}\label{def:reg}
A $n$-tuple $\balpha:=(\alpha_1,\ldots,\alpha_n) \in (\Q^\star)^n$ is \emph{regular} with respect to the Mahler system 
\eqref{eq:mahler} if the matrix $A(\z)$ is well-defined and invertible at $T^k\balpha$ for all non-negative integers $k$. 
\end{defi}

\subsubsection{The case $n=1$} 
In 1990, Ku. Nishioka \cite{Ni90} established the equality 
\begin{equation}\label{eq: degtr}
{\rm tr.deg}_{\Q}(f_1(\alpha),\ldots,f_m(\alpha))= {\rm tr.deg}_{\mathbb C(z)}(f_1(z),\ldots,f_m(z)) \, 
\end{equation}
for all matrices $A(z)$ and all regular algebraic points $\alpha$ in the open unit disc of $\mathbb C$.    
The great advantage here is that Nishioka's theorem 
applies to all Mahler systems and not only to the regular singular ones.
More recently, Philippon \cite{PPH} refines Nishioka's theorem by proving a result analogous to our 
Theorem \ref{thm: permanence}. Some striking consequences of Philippon's theorem, concerning automatic numbers and 
the transcendence of values of Mahler functions at 
algebraic points, are given by the authors in \cite{AF1,AF2}. For instance, it is proved that there exists an algorithm 
that performs the following task: Given any Mahler function $f(z)\in \Q\{z\}$ and any algebraic number $\alpha$, it   
decides whether $f(\alpha)$ is algebraic or transcendental. 

\subsubsection{The case $n\geq 2$} In contrast, and despite many attempts in this direction, 
no general result has been proved so far when $n\geq 2$.   In 1982, Loxton and van der Poorten \cite{LvdP82} 
published a paper claiming that 
\begin{equation}\label{eq:degretranscendance}
{\rm tr.deg}_{\Q}(f_1(\balpha),\ldots,f_m(\balpha))= {\rm tr.deg}_{\mathbb C(\z)}(f_1(\z),\ldots,f_m(\z)) \,
\end{equation}
when the matrix $A(\boldsymbol 0)$ is well-defined and non-singular, the pair $(T,\balpha)$ is admissible and $\balpha$ is a 
regular algebraic point.  
Unfortunately, some argument in their proof is flawed. This is reported, for instance, by  Ku. Nishioka in \cite{Ni90}. 
To date, Mahler's method in several variables has been applied successfully only for the two following 
much more restricted classes of matrices.   

\begin{itemize}

\item[$\bullet$] First, Kubota \cite{Ku77} proved in 1977 that Equality \eqref{eq:degretranscendance} holds 
true when the matrix $A(\z)$ is 
\emph{almost diagonal}, that is, when 
the functions $f_i(\z)$ satisfy equations of the form 
\begin{equation}\label{eq:degre1}
\left(\begin{array}{c} 1\\ f_1(T\z) \\ \vdots \\ f_m(T\z)  \end{array} \right) = 
\left(\begin{array}{c|ccc} 
1 & 0\cdots &  & 0\\
\hline 
 b_1(\z)& a_1(\z)&&
\\  \vdots && \ddots &
\\ b_m(\z) &&& a_m(\z) 
 \end{array}\right)\left(\begin{array}{c} 1\\ f_1(\z) \\ \vdots \\ f_m(\z) 
\end{array} \right)\, 
\end{equation}
where $a_i(\z), b_i(\z) \in \Q(z)$, with no pole at $\boldsymbol 0$, and $\ a_i(\boldsymbol 0) \neq 0\, $.  

\medskip

\item[$\bullet$] Then, Nishioka \cite{Ni96}  proved in 1996 that Equality \eqref{eq:degretranscendance} holds true when the 
matrix $A(\z)$ is \emph{almost constant}, that is,  
for systems of the form  
\begin{equation}\label{eq:constante}
\left(\begin{array}{c} 1\\ f_1(T\z) \\ \vdots \\ f_m(T\z)  \end{array} \right) = 
\left(\begin{array}{c|ccc} 
1 & 0&\cdots   & 0
\\ 
\hline
b_1(\z) &&& 
\\
\vdots   & &B & 
\\ b_m(\z) &&& 
 \end{array}\right)\left(\begin{array}{c} 1\\ f_1(\z) \\ \vdots \\ f_m(\z) 
\end{array} \right)\, 
\end{equation}
where $B\in {\rm GL}_m(\Q)$, and $b_1(\z),\ldots,b_m(\z)$ are rational functions with no pole at $\boldsymbol 0$.  
\end{itemize}

In these two examples, the matrix defining the system is always assumed to 
be well-defined and non-singular at $\boldsymbol 0$.  
In particular, the corresponding Mahler systems are regular singular. 

\section{Main results}\label{sec: main}

As a first contribution, we provide a complete proof of the main result claimed by Loxton and 
van der Poorten in \cite{LvdP82}.  Furthermore, we refine the conclusion given 
by the quantitative Equality \eqref{eq:degretranscendance}, by proving 
that any algebraic relation over $\Q$ between the values $f_1(\balpha),\ldots,f_m(\balpha)$  can be lifted to 
a similar algebraic relation over $\Q(\z)$ between the functions $f_1(\z),\ldots,f_m(\z)$.  
We stress that such a qualitative refinement is a key for applications. 

\begin{thm}[Lifting]
\label{thm: permanence}
Let $f_1(\z),\ldots,f_m(\z)\in \Q\{\z\}$ be solutions to a regular singular Mahler system of type \eqref{eq:mahler}. 
Let us assume furthermore that $\balpha\in  (\Q^\star)^n$ is a regular point and 
that the pair $(T,\balpha)$ is admissible. Then for any homogeneous polynomial $P \in \Q[X_1,\ldots,X_m]$ such that 
$$
P(f_{1}(\balpha),\ldots,f_{m}(\balpha)) = 0\,,
$$
there exists a polynomial $Q\in\Q[\z,X_1,\ldots,X_m]$, homogeneous in the variables $X_1,\ldots,X_m$, 
such that 
\begin{eqnarray*}
Q(\z,f_{1}(\z),\ldots,f_{m}(\z)) = 0 & \mbox{ and } &
Q(\balpha,X_1,\ldots,X_m)=P(X_1,\ldots,X_m).
\end{eqnarray*}
\end{thm}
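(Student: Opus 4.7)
The strategy is to reduce to the case of a constant matrix by using the regular singular hypothesis, prove the lifting for constant systems by iterating the functional equation, and transfer the resulting relation back through the gauge transform.

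\emph{Step 1: Gauge reduction.} By Definition \ref{def: regular singular}, I fix $\Phi(\z) \in \mathrm{GL}_m(\widehat{\bK})$ with $B := \Phi(T\z) A(\z) \Phi^{-1}(\z) \in \mathrm{GL}_m(\Q)$, and set $\g(\z) := \Phi(\z) \f(\z)$, so that $\g(T\z) = B \g(\z)$. Since $\Phi$ is a priori only ramified meromorphic, I fix $d$ so that the entries of $\Phi$ and $\Phi^{-1}$ belong to $\widehat{\bK}_d$ and clear denominators by a nonzero element of $\Q\{\z^{1/d}\}$. By admissibility condition (c), infinitely many iterates $T^k\balpha$ kill none of these denominators, and regularity of $\balpha$ ensures that $\f(\balpha)$ and $\f(T^k\balpha)$ are related by the invertible matrix $A(T^{k-1}\balpha)\cdots A(\balpha)$. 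Lifting a relation at $\balpha$ therefore reduces, after a linear change of variables in $X_1,\ldots,X_m$, to lifting a relation at $T^k\balpha$ for $k$ large enough that $\Phi(T^k\balpha)$ is well-defined and invertible. From this point on I may assume $\Phi(\balpha)$ is well-defined and invertible; any homogeneous $P$ with $P(\f(\balpha)) = 0$ then becomes $\tilde P(\g(\balpha)) = 0$, where $\tilde P(\X) := P(\Phi(\balpha)^{-1}\X)$ is again homogeneous of the same degree.

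\emph{Step 2: Lifting for the constant system.} Iterating yields $\g(T^k\balpha) = B^k \g(\balpha)$, so the vanishing $\tilde P(\g(\balpha)) = 0$ produces a family of polynomials $\tilde P_k(\X) := \tilde P(B^{-k}\X)$ vanishing at the points $\g(T^k\balpha)$. Combined with admissibility bounds (a)--(b), which control both the growth of the iterates of $T$ and the exponential decay of $T^k\balpha$, this places us in the arithmetic regime where a Siegel--Shidlovskii style construction, in the spirit of Nesterenko--Shidlovskii, Beukers, Andr\'e and Philippon, can be carried out. The plan is to construct from $\tilde P$ an auxiliary homogeneous polynomial $Q(\z,\X) \in \Q[\z,\X]$ of suitably bounded bidegree and height such that $Q(\z, \g(\z))$ vanishes to high order at $\boldsymbol 0$, and then to exploit the Mahler iterates $Q(T^k\z, B^k\X)$: their joint vanishing at $\balpha$, forced by the analytic-arithmetic zero estimate obtained from (a)--(b), eventually compels $Q(\z, \g(\z)) \equiv 0$ identically. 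Finally, the non-vanishing condition (c) applied to the coefficients of $Q$ lets me extract a lift normalised so that $Q(\balpha,\X) = \tilde P(\X)$.

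\emph{Step 3: Transfer back.} Substituting $\g(\z) = \Phi(\z) \f(\z)$ into $Q(\z, \g(\z)) = 0$ yields a homogeneous identity on $\f(\z)$ with coefficients in $\widehat{\bK}$; multiplying by a polynomial that does not vanish at $\balpha$ gives $Q'(\z,\X) \in \Q[\z,\X]$ with $Q'(\z, \f(\z)) = 0$ and $Q'(\balpha,\X)$ a nonzero scalar multiple of $P(\X)$, which I then rescale. The main obstacle will be Step 2, namely implementing the Siegel--Shidlovskii construction in several variables: the matrix $T$ need not be diagonalisable, its iterates can mix coordinates in intricate ways, and the admissibility estimates only guarantee exponential (not geometric) control of $T^k\balpha$, so the classical one-variable zero estimate of \cite{PPH} has to be genuinely upgraded to a multivariable analytic-arithmetic statement. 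A secondary difficulty, which is precisely what conditions (a)--(c) are designed to overcome, is bookkeeping the ramified meromorphic gauge matrix $\Phi$ along the orbit $(T^k\balpha)_{k\geq 0}$ so that all specializations remain well-defined throughout the argument.
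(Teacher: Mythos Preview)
Your outline has the right architecture---gauge transform, auxiliary function construction, transfer back---and this is indeed the skeleton of the paper's argument (Section~6, specialised to $r=1$). But Step~2 as written is not a proof: it is a declaration of intent to run a Siegel--Shidlovskii style construction, with explicit acknowledgment that ``the main obstacle will be Step~2.'' The whole content of the theorem lives there, and the paper devotes the bulk of Section~6 to it precisely because the na\"{\i}ve implementation fails. Loxton and van der Poorten attempted exactly the construction you describe in \cite{LvdP82}, and their proof has a gap: the ``index'' they use to track the order of vanishing of the auxiliary function does not satisfy the multiplicativity property their argument requires (see the discussion opening Section~6.3). Your sketch does not indicate how you would avoid this, and invoking \cite{PPH,NS96,Beu06,An3} does not help---those treat one-variable Mahler or differential systems, where the difficulty does not arise.

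Two further differences are worth noting. First, the paper does \emph{not} pass to $\g=\Phi\f$ and work with a constant system; instead it keeps $\f$ and uses the identity $A_\k(\balpha)=\Phi(\balpha)B^\k\Phi(T^\k\balpha)^{-1}$ to recognise the iterated coefficients $\tau_{j,\k}$ as values of multivariate exponential polynomials $\psi_j(\k,\z)$ evaluated at $T^\k\balpha$ (Lemma~\ref{lem: connect}). This structure is what feeds into Condition~(C) and makes the vanishing lemma (Lemma~\ref{lem: vanishpol}) work. Second, the fix for the Loxton--van der Poorten gap is to formalise the field $\mathbb L=\C(\z,\Phi(\z))$ as a Noetherian quotient ring $\A$ and replace the ad hoc index by honest valuations $\nu_i$ attached to the primes over $(\z)$ (Section~6.3); the auxiliary function is then built so that $\nu_i(E(\bchi(\k),\z))$ is large (Lemma~\ref{lem:fonctionauxiliaire}), and Lemma~\ref{lem:nulliteF} extracts the desired high-order vanishing. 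Your Step~3 also underestimates the work needed when $\Phi$ is genuinely ramified meromorphic: clearing denominators introduces a factor $\Delta(\z)$ whose values $|\Delta(T^\k\balpha)|$ must be bounded below, and the paper invokes a result of Corvaja--Zannier \cite{CZ05} for this (Section~\ref{sec:ramified}).
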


Similar results  
have first been obtained in the framework of linear differential equations by Nesterenko and Shidlovskii \cite{NS96}, 
by Beukers \cite{Beu06} using some results of Andr\'e \cite{An1,An2} on the theory of $E$-operators,  
and then by Andr\'e \cite{An3}. In the case $n=1$, the authors already establish Theorem \ref{thm: permanence}  
for general linear Mahler systems in \cite{AF2}, as a consequence of the slightly weaker non-homogeneous version 
due to Philippon \cite{PPH}. 

As a straightforward application, we deduce the following result. 

\begin{coro}
We continue with the assumptions of Theorem \ref{thm: permanence}. If the functions 
$f_1(\z),\ldots,f_m(\z)$ are linearly independent over $\Q(\z)$, then the numbers 
$f_{1}(\balpha),\ldots,f_{m}(\balpha)$ are linearly independent over $\Q$. 
\end{coro}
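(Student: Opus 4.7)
The plan is to argue by contraposition: assuming that $f_1(\balpha),\ldots,f_m(\balpha)$ are linearly dependent over $\Q$, I will produce a nontrivial $\Q(\z)$-linear dependence between $f_1(\z),\ldots,f_m(\z)$, contradicting the hypothesis. The whole argument is essentially a direct specialization of Theorem \ref{thm: permanence} to the linear (degree-one) case.

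First, I would translate the assumed dependence into the polynomial language of Theorem \ref{thm: permanence}. Write $\sum_{i=1}^m \lambda_i f_i(\balpha) = 0$ for some $\lambda_1,\ldots,\lambda_m \in \Q$ not all zero, and set
$$
P(X_1,\ldots,X_m) := \sum_{i=1}^m \lambda_i X_i \in \Q[X_1,\ldots,X_m].
$$
This $P$ is homogeneous of degree one, and by construction $P(f_1(\balpha),\ldots,f_m(\balpha))=0$. The hypotheses of Theorem \ref{thm: permanence} (regular singular system, admissible pair, regular point $\balpha$) are exactly the ones we inherit from the corollary's statement, so the theorem applies.

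Next, I would apply Theorem \ref{thm: permanence} to this $P$ to obtain $Q \in \Q[\z,X_1,\ldots,X_m]$, homogeneous in $X_1,\ldots,X_m$, with
$$
Q(\z,f_1(\z),\ldots,f_m(\z))=0 \quad \text{and} \quad Q(\balpha,X_1,\ldots,X_m)=P(X_1,\ldots,X_m).
$$
Since specialization in the $\z$ variables preserves the degree of homogeneity in $(X_1,\ldots,X_m)$, and $P$ is of degree one in the $X_i$, the lifted polynomial $Q$ must itself be of degree one in $(X_1,\ldots,X_m)$. Hence we may write $Q(\z,X_1,\ldots,X_m) = \sum_{i=1}^m q_i(\z) X_i$ with $q_i(\z) \in \Q[\z]$, and the two conclusions above translate to $\sum_{i=1}^m q_i(\z) f_i(\z) = 0$ together with $q_i(\balpha)=\lambda_i$ for each $i$.

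Finally, since not all $\lambda_i$ vanish, not all polynomials $q_i(\z)$ vanish, so this last identity is a nontrivial $\Q(\z)$-linear relation between $f_1(\z),\ldots,f_m(\z)$, contradicting the assumed linear independence over $\Q(\z)$. There is no real obstacle here: the content of the corollary lies entirely in Theorem \ref{thm: permanence}, and the only point to check is the routine observation that the lifted polynomial $Q$ inherits its degree of homogeneity from $P$, which forces it to be a $\Q[\z]$-linear form in the $X_i$'s.
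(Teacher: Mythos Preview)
Your proof is correct and is exactly the straightforward contrapositive argument the paper has in mind; the paper does not spell out a proof but simply calls the corollary ``a straightforward application'' of Theorem~\ref{thm: permanence}, and your derivation is precisely that application.
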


\begin{rem} 
Theorem \ref{thm: permanence} also applies to non-homogeneous relations, for we can always  
turn an inhomogeneous relation into an homogeneous one by adding the constant function 
$f_{0}\equiv 1$ to the system and replace the matrix $A(\z)$ by 
$$\left(\begin{array}{c|ccc} 
1 &  &0&
\\ \hline 
\\ 
 0 &  &A(\z) &
 \\ &&&
\end{array}\right)\, .$$
\end{rem}

Let us turn to our second main result. In 1987, van der Poorten \cite{vdP87} claimed that  
 Mahler's method could be generalized in such a way that it would become possible to consider simultaneously 
 several linear systems of type \eqref{eq:mahler} associated with sufficiently different transformations $T_i$. 
In order to guarantee some uniform speed of convergence to $\boldsymbol 0$ for the orbits of algebraic points $\balpha_i$ 
under the different transformations $T_i$, van der Poorten suggested  iterating each matrix to different powers. 
We then leave the classical Mahler method, which considers the action from $\N$ on $\C^n$ induced by a single transformation $T$, 
to consider an action of $\N^r$ onto some $\C^{n_1+\cdots +n_r}$ induced by several transformations $T_1,\ldots,T_r$.  
He also pointed out several striking consequences that would follow from such a theory. 
However, only very sporadic examples \cite{Ni94,Mas99} have been obtained so far in this direction.   
In 1994, Nishioka \cite{Ni94} studied this problem for the almost diagonal Mahler systems of the form \eqref{eq:degre1}.   
In particular, she deduced from her main theorem the following result. 
Given a non-zero algebraic number $\alpha$, $\vert \alpha\vert <1$, 
and a set of quadratic irrational numbers $(\omega_i)_{i\in\mathcal I}$ such that $\mathbb Q(\omega_i)\not=\mathbb Q(\omega_j)$ if 
$i\not= j$, the complex numbers that belong to the sets 
$$
\left(\sum_{k=0}^\infty \alpha^{d^k}\,\right)_{k  \geq 2}\, ,\;\; \left(\prod_{k=0}^\infty (1-\alpha^{d^k})\right)_{k  \geq 2}\,  
\,,\;\; \left(\sum_{k=0}^\infty \lfloor k\omega_i\rfloor \alpha^k\right)_{i\in \mathcal I} 
$$
are all algebraically independent over $\Q$. 

We develop here a similar theory that applies to a much larger class of Mahler systems, transformation matrices, 
and algebraic points.  
In this respect,  Theorem \ref{thm: purity} solves van der Poorten's problem in a very satisfactory way for 
all regular singular systems.  Before stating this result, we introduce some notation. 
Given a finite set of complex numbers $\mathcal E:= \{\zeta_1,\ldots,\zeta_m\}$,  we let 
$$
{\rm Alg}_{\Q}(\mathcal E):= \left\{P(X_1,\ldots,X_m)\in\Q[X_1,\ldots,X_m] : P(\zeta_1,\ldots,\zeta_m)=0\right\} \,
$$ 
denote the ideal of algebraic relations over $\Q$ between the elements of $\mathcal E$. 
Now, we consider several sets of complex numbers 
$$\mathcal  E_1=\{\zeta_{1,1},\ldots,\zeta_{1,m_1}\},\ldots, \mathcal  E_r
=\{\zeta_{r,1},\ldots,\zeta_{r,m_r}\} \,,$$
and we set $\mathcal E=\cup \mathcal E_i$. Let  
$P\in {\rm Alg}_{\Q}(\mathcal E)$. If the polynomial $P$ belongs to the extended ideal 
${\rm Alg}_{\Q}(\mathcal E_i\mid \mathcal E)$ generated by 
${\rm Alg}_{\Q}(\mathcal E_i)$ in $\Q[X_1,\ldots,X_M]$, where $M=m_1+\cdots+m_r$, 
then we say that $P$ is a \emph{pure algebraic relation} with respect to $\mathcal E_i$. 
Our second main result then reads as follows. 

\begin{thm}[Purity--Independent transformations]
\label{thm: purity}
Let $r\geq 2$ be an integer. For every integer $i$, $1\leq i \leq r$, we consider a regular singular Mahler system   
\begin{equation}\stepcounter{equation}
\label{eq:mahleri}\tag{\theequation .i}
\left(\begin{array}{c} f_{i,1}(T_i\z_i) \\ \vdots \\ f_{i,m_i}(T_i\z_i) \end{array} \right) = 
A_i(\z_i)\left(\begin{array}{c} f_{i,1}(\z_i) \\ \vdots \\ f_{i,m_i}(\z_i) 
\end{array} \right) 
\end{equation}
where $A_i(\z_i)$ belongs to ${\rm GL}_{m_i}(\Q(\z_i))$, $\z_i:=(z_{i,1},\ldots,z_{i,n_i})$ is a family of indeterminates, 
$T_i$ is an $n_i\times n_i$ matrix,  
with non-negative  integer coefficients and with spectral radius 
$\rho(T_i)$. For every $i$, $1\leq i\leq r$,  let us consider 
$$\mathcal E_i\subseteq \{f_{i,1}(\balpha_i),\ldots,f_{i,m_i}(\balpha_i)\}$$  
and set $\mathcal E:=\cup_{i=1}^r\mathcal E_i$. 
Suppose that 

\begin{enumerate}

\item[\rm{(i)}]  for every $i$, $\balpha_i\in(\Q^\star)^{n_i}$ is a regular point 
with respect to the system \eqref{eq:mahleri} and the pair $(T_i,\balpha_i)$ is admissible, and

\item[\rm{(ii)}]   for every pair $(i,j)$, $i\not=j$, $\log \rho(T_i)/\log \rho(T_j)\not\in \mathbb Q$. 

\end{enumerate}
Then
$$
{\rm Alg}_{\Q}(\mathcal E) = \sum_{i=1}^r {\rm Alg}_{\Q}(\mathcal E_i\mid \mathcal E)\,.
$$
\end{thm}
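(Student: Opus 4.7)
The plan is to reduce Theorem \ref{thm: purity} to an application of Theorem \ref{thm: permanence} applied to a suitably chosen \emph{combined} Mahler system. For a tuple of positive integers $(k_1,\ldots,k_r)$, we form the block-diagonal system in the variables $\z=(\z_1,\ldots,\z_r)$, with transformation $T=\mathrm{diag}(T_1^{k_1},\ldots,T_r^{k_r})$ and matrix $A(\z)=\mathrm{diag}(A_1^{[k_1]}(\z_1),\ldots,A_r^{[k_r]}(\z_r))$, where $A_i^{[k]}(\z_i):=A_i(T_i^{k-1}\z_i)\cdots A_i(\z_i)$ is the $k$-fold iterate of the $i$-th matrix, so that the collection $(f_{i,j}(\z_i))_{i,j}$ forms a global analytic solution. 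This combined system is regular singular because each block is: if $\Phi_i(T_i\z_i)A_i(\z_i)\Phi_i^{-1}(\z_i)=B_i\in\mathrm{GL}_{m_i}(\Q)$, then the block-diagonal gauge $\mathrm{diag}(\Phi_i)$ conjugates $A(\z)$ to $\mathrm{diag}(B_i^{k_i})$.

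The main obstacle is verifying, for a suitable choice of $(k_1,\ldots,k_r)$, the admissibility of the combined pair $(T,(\balpha_1,\ldots,\balpha_r))$ in the sense of Definition \ref{def:admissible}. Conditions (a) and (b) force the spectral radii $\rho(T_i)^{k_i}$ to be mutually comparable, so that the uniform decay $\log|\alpha_{i,j}^{(Nk_i)}|\leq -c\rho^N$ holds with the common radius $\rho=\max_i\rho(T_i)^{k_i}$; the hypothesis $\log\rho(T_i)/\log\rho(T_j)\not\in\mathbb{Q}$ enters at this stage through a simultaneous Diophantine approximation of Kronecker type, producing infinitely many tuples along which the quantities $k_i\log\rho(T_i)$ are arbitrarily close. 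The subtler non-vanishing Condition (c) is the real crux, since for a fixed tuple the orbit $\{(T_1^{Nk_1}\balpha_1,\ldots,T_r^{Nk_r}\balpha_r)\}_{N\geq1}$ is a single one-parameter sequence that could \emph{a priori} lie in the zero locus of some nonzero analytic function. The irrationality assumption must be leveraged here, presumably by varying the tuple within the set of good Diophantine approximations so that, for any given nonzero analytic function relevant to the argument, some admissible orbit avoids its zero set infinitely often.

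With admissibility secured, Theorem \ref{thm: permanence} applied to a homogenized form of a given $P\in\mathrm{Alg}_{\Q}(\mathcal E)$ (reduction to the homogeneous case proceeds as in the remark after Theorem \ref{thm: permanence}) yields a polynomial $Q\in\Q[\z,X_{1,1},\ldots,X_{r,m_r}]$ with $Q(\z,(f_{i,j}(\z_i))_{i,j})\equiv 0$ and $Q(\balpha,X)=P(X)$. The final step is to decompose $Q$ as a sum of pure functional relations, one per system. This relies on the following algebraic fact: because the functions $(f_{i,j}(\z_i))_j$ for distinct $i$ depend on disjoint groups of variables, the ideal of functional relations over $\Q(\z)$ among the full family is generated by the ideals $\mathrm{Alg}_{\Q}(\{f_{i,j}(\z_i)\}_j)\subset\Q[X_{i,1},\ldots,X_{i,m_i}]$ extended to $\Q(\z)[X]$. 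Since any such pure generator automatically vanishes at the numerical values $\zeta_{i,j}=f_{i,j}(\balpha_i)$, specialization at $\z=\balpha$ expresses $P$ as an element of $\sum_i\mathrm{Alg}_{\Q}(\mathcal E_i\mid\mathcal E)$, as required.
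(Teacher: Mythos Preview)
Your plan has a structural gap at the admissibility step that cannot be repaired within the framework of Theorem~\ref{thm: permanence}. For the block-diagonal pair $(T,\balpha)$ with $T=\mathrm{diag}(T_1^{k_1},\ldots,T_r^{k_r})$ to be admissible in the sense of Definition~\ref{def:admissible}, Lemma~\ref{lem:ClasseM} forces $T\in\M$, and in particular Condition~(iii) (via the normal-form analysis) requires all irreducible diagonal blocks to share the same spectral radius, i.e.\ $\rho(T_1)^{k_1}=\cdots=\rho(T_r)^{k_r}$. But hypothesis~(ii) says precisely that the $\log\rho(T_i)$ are pairwise $\mathbb Q$-linearly independent, so no such tuple $(k_1,\ldots,k_r)$ of positive integers exists. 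Thus \emph{no} fixed combined system of your shape can ever satisfy Definition~\ref{def:admissible}, and Theorem~\ref{thm: permanence} is simply not applicable. Your suggestion to ``vary the tuple within the set of good Diophantine approximations'' is the right instinct, but it takes you outside the single-$T$ setting of Theorem~\ref{thm: permanence}: one now needs to iterate along a set $\K\subset\N^r$ of tuples staying at bounded distance from the line $\mathbb R\Theta$, $\Theta=(1/\log\rho(T_i))_i$, which is exactly what the paper formalises in Theorem~\ref{thm: families} with its generalised Conditions~(A), (B), (C). The corresponding vanishing Condition~(C) is genuinely harder than~(c) and is established separately via Theorem~\ref{thm:lemmedezero} and Section~\ref{sec: admgen}.

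A second, smaller gap: even granting a lifting theorem, your last step is too quick. Writing the lifted functional relation $Q$ as a $\Q(\z)[\X]$-combination of pure relations $R_i\in\mathrm{Alg}_{\Q(\z_i)}(\f_i(\z_i))$ and then specialising at $\z=\balpha$ does \emph{not} automatically land in $\sum_i\mathrm{Alg}_{\Q}(\mathcal E_i\mid\mathcal E)$: the coefficients $S_i(\z,\X)$ in $Q=\sum_i S_iR_i$ may have poles at $\balpha$, and the specialised $R_i(\balpha_i,\X_i)$ need not lie in $\mathrm{Alg}_{\Q}(\mathcal E_i)$ when $R_i$ has $\z_i$-dependent coefficients. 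The paper circumvents this by first reducing (via Kronecker powers of the systems) to \emph{multilinear} relations, proving a clean decomposition in that case (Lemma~\ref{lem:pur-fonc} and Proposition~\ref{prop:pur-lin}), and handling the specialisation through a careful basis argument (Lemma~\ref{lem:independancelineaire}).
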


In other words, the algebraic relations between all elements of $\mathcal E$ are generated by the 
pure algebraic relations with respect to each subset $\mathcal E_i$. We stress that Theorem \ref{thm: purity} is 
a strong statement about algebraic independence.  

\begin{coro}\label{cor:2}
We continue with the assumptions of Theorem \ref{thm: purity}. Furthermore, we assume that  
for every $i$, $1\leq i\leq r$, all complex numbers that belong to the set $\mathcal E_i$ are algebraically independent  over $\Q$. 
Then all complex numbers that belong to the set $\mathcal E$ are algebraically independent over $\Q$. 
\end{coro}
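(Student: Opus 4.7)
The plan is to derive Corollary \ref{cor:2} as an immediate consequence of Theorem \ref{thm: purity}, by specializing the conclusion of that theorem under the added hypothesis that each $\mathcal{E}_i$ is algebraically independent over $\Q$.

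First I would translate the hypothesis into ideal-theoretic language. The algebraic independence of the elements of $\mathcal{E}_i$ over $\Q$ is equivalent to the vanishing $\mathrm{Alg}_{\Q}(\mathcal{E}_i)=(0)$ inside the polynomial ring in $|\mathcal{E}_i|$ variables attached to $\mathcal{E}_i$. By the very definition of the extended ideal $\mathrm{Alg}_{\Q}(\mathcal{E}_i\mid \mathcal{E})$ — namely the ideal of $\Q[X_1,\ldots,X_M]$ generated by the image of $\mathrm{Alg}_{\Q}(\mathcal{E}_i)$ under the natural inclusion of polynomial rings — it follows at once that
$$
\mathrm{Alg}_{\Q}(\mathcal{E}_i\mid \mathcal{E}) \;=\; (0) \qquad \text{for every } i, \; 1\le i\le r.
$$

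Next I would invoke Theorem \ref{thm: purity}, whose standing hypotheses are precisely those retained in the corollary, to obtain
$$
\mathrm{Alg}_{\Q}(\mathcal{E}) \;=\; \sum_{i=1}^{r} \mathrm{Alg}_{\Q}(\mathcal{E}_i\mid \mathcal{E}) \;=\; (0).
$$
Since the vanishing of the ideal of algebraic relations is by definition the algebraic independence of the generators, we conclude that the elements of $\mathcal{E}$ are algebraically independent over $\Q$, which is the desired statement.

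There is essentially no genuine obstacle here; the corollary is a direct reading of Theorem \ref{thm: purity} in the case where each block of generators is already algebraically independent. The only point requiring mild care is bookkeeping of variables: one fixes once and for all an ordering of $\mathcal{E}$ extending the orderings of the $\mathcal{E}_i$, so that the inclusions $\Q[X_1,\ldots,X_{m_i}]\hookrightarrow \Q[X_1,\ldots,X_M]$ used to form the extended ideals are unambiguous. All the substantive work resides in Theorem \ref{thm: purity} itself.
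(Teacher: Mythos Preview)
Your proposal is correct and matches the paper's approach: the corollary is stated without proof in the paper precisely because it is the immediate specialization of Theorem~\ref{thm: purity} to the case where each $\mathrm{Alg}_{\Q}(\mathcal{E}_i)$ vanishes. Your translation of algebraic independence into the vanishing of the relation ideals, and the observation that the extended ideals then vanish as well, is exactly the intended one-line argument.
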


\medskip

This paper is organized as follows. Clearly, the strength of Theorems \ref{thm: permanence} and \ref{thm: purity} 
strongly depends on our ability to provide simple and natural conditions that ensure the admissibility of 
pairs $(T, {\boldsymbol \alpha})$. 
This problem is addressed in Section \ref{sec: admissibility1}, where concrete and optimal conditions are 
given. A well-known feature of Mahler's method (and also of transcendence theory in general) 
is the great importance of proving \emph{vanishing theorems}, that is, of finding general conditions 
that allow to guarantee non-vanishing conditions of type (c). 
In the case of a single transformation, Masser \cite{Mas82} solved this problem in a rather definitive way. 
More recently, Corvaja and Zannier \cite{CZ05} used the Subspace Theorem to prove a general result 
concerning the vanishing at $S$-units of analytic multivariate power series with algebraic coefficients. 
Based on this result, we prove in Section \ref{sec: vanishing} our own vanishing theorem, that applies to the study 
of several Mahler systems associated with different transformations. This is a key ingredient for proving Theorem \ref{thm: purity}.  
In Section \ref{sec: families}, we state Theorem \ref{thm: families}, which is an axiomatic result concerning 
the algebraic relations between values of functions belonging to several Mahler systems. It is restricted to algebraic points satisfying 
some ad hoc admissibility conditions called (A), (B), and (C), which replace Conditions (a), (b), and (c) in this more general framework. 
Theorem \ref{thm: families} is proved in Section \ref{sec: mainproof}, 
while concrete and optimal conditions for admissibility in Theorem \ref{thm: families} are obtained in Section \ref{sec: admgen}.   
Theorems \ref{thm: permanence} and \ref{thm: purity} are then derived from these results  
in Section \ref{sec: final}. 

\subsubsection*{Notation} We fix here some notation that we use in this paper. 
Given a field $\mathbb K$, we let denote by $\mathbb K^\star$ the set $\mathbb K\setminus\{0\}$. 
Let $d$ be a positive integer. If $\balpha:=(\alpha_1,\ldots,\alpha_d)\in (\mathbb K^\star)^d$ and 
$\k:=(k_1,\ldots,k_d)\in\mathbb Z^d$, then $\balpha^{\k}$ stands for $\alpha_1^{k_1}\cdots \alpha_d^{k_d}$. 
Given a $d$-tuple of natural numbers 
$\k:=(k_1,\ldots,k_d)$, we set $\vert \k\vert =k_1+\cdots +k_d$. 
The maximum norm of $\C^d$ and the maximum norm of $\M_d(\C)$ are both denoted by the same symbol $\Vert\cdot\Vert$. 
We let $H(\cdot)$ denote the \emph{absolute Weil height} over the projective space $\mathbb P^d(\Q)$.  
Given $\boldsymbol \beta = (\beta_1,\ldots,\beta_d) \in \Q^d$, we also write $H(\boldsymbol \beta)$ instead of $H(\beta_1:\cdots:\beta_d:1)$. 

\section{Admissibility conditions for Theorems \ref{thm: permanence} and \ref{thm: purity}}\label{sec: admissibility1}

Conditions (a), (b), and (c) in Definition \ref{def:admissible} are necessary in order to apply Mahler's method (cf. \cite{Ma29}). 
Though they appear quite naturally in transcendance proofs, it is not easy, at first glance, to see how to check them. 
We provide here a simple characterization of matrices and algebraic points satisfying these conditions, gathering and slightly completing 
results of Masser, Kubota, Loxton and van der Poorten. 


\begin{defi} 
Let $T$ be an $n\times n$ matrix with non-negative integer coefficients and with spectral radius $\rho$. 
We say that $T$ belongs to the class $\M$ if it satisfies the following three conditions.  

\smallskip

\begin{itemize}
\item[(i)] It is non-singular. 

\smallskip

\item[(ii)] None of its eigenvalues  are a root of unity.

\smallskip

\item[(iii)] There exists an eigenvector with positive coordinates associated with the eigenvalue $\rho$. 
\end{itemize}
\end{defi}

In particular, a matrix in the class $\M$ has a spectral radius $\rho > 1$. 

\begin{rem}
Let us consider $r$ Mahler systems associated with transformations $T_1,\ldots,T_r$ in $\M$, all having the 
same spectral radius $\rho$. Then the diagonal matrix $T:={\rm diag}(T_1,\ldots,T_r)$ also belongs to the class $\M$. 
More generally, if  $T_1$ and $T_2$ are two matrices in $\M$ whose spectral radii are multiplicatively dependent, say 
$\rho(T_1)^p=\rho(T_2)^q$, then the matrix 
$$
T := \left(\begin{array}{cc} T_1^p & 0 \\ 0 & T_2^q \end{array} \right) \, 
$$
also belongs to the class $\M$. Thus, if one considers $r$ Mahler systems, associated with transformation 
matrices $T_1\ldots,T_r\in \M$, and with pairwise
multiplicatively dependent spectral radius, it is possible to 
gather them into a bigger Mahler system whose transformation matrix also belongs to the class $\M$, 
and then to apply  Theorem \ref{thm: permanence}. 
\end{rem}

Given a one-variable Mahler system associated with a matrix $A(z)$, we could always consider the same twice 
system but with different variables. 
That is, the system associated with the matrix
$$
\left(\begin{array}{cc} A(z_1) & 0 \\ 0 & A(z_2) \end{array} \right) \,.
$$
This shows that some kind of minimal independence between the coordinates of the point $\balpha=(\alpha_1,\alpha_2)$ 
is required in order to apply Mahler's method. This leads to the following natural definition. 

\begin{defi}
An algebraic point $\balpha\in (\Q^\star)^n$ is  said to be {\it $T$-independent} 
if there is no non-zero $n$-tuple of integers $\bmu$ for which $(T^k\balpha)^\bmu=1$ for all $k$ 
in an arithmetic progression.  
\end{defi}

With these definitions, we can gather results of Kubota \cite{Ku77}, Loxton and van der Poorten 
\cite{LvdP77,LvdP77II}, and mainly Masser \cite{Mas82}, to give the following useful characterization 
of the notion of admissibility.  

\begin{thm}
\label{thm:masser}
Let $T$ be an $n\times n$ matrix with non-negative integer coefficients and $\balpha\in (\Q^\star)^n$.  
Then the pair $(T,\balpha)$ is admissible if and only if  
 $T$ belongs to the class $\M$,  $\lim_{k\to \infty}T^k\balpha=\boldsymbol 0$ and $\balpha$ is $T$-independent. 
 \end{thm}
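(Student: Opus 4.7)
The plan is to prove both implications of the equivalence. Throughout let $\rho := \rho(T)$. One direction follows from classical Perron--Frobenius theory together with elementary auxiliary-polynomial constructions; the other reduces its non-vanishing assertion to Masser's theorem.

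For the direction $(\Leftarrow)$, suppose $T \in \M$, $T^k\balpha \to \boldsymbol 0$, and $\balpha$ is $T$-independent. The existence of a strictly positive eigenvector for $\rho$, combined with non-negativity of $T$ and the absence of root-of-unity eigenvalues, forces (by a Perron--Frobenius analysis) that $\rho$ is the only eigenvalue of modulus $\rho$ and that it carries no non-trivial Jordan block; hence $\|T^k\| = O(\rho^k)$, which is condition~(a). Setting $\bu := \log|\balpha|$, one has $\log|T^k\balpha| = T^k\bu$, and the same spectral information together with $T^k\balpha \to \boldsymbol 0$ shows that $T^k\bu/\rho^k$ converges to a non-zero negative multiple of the Perron eigenvector, which yields~(b). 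Condition~(c) is exactly Masser's vanishing theorem \cite{Mas82} applied in this setting.

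For the direction $(\Rightarrow)$, assume $(T,\balpha)$ is admissible. Condition~(b) immediately gives $T^k\balpha\to\boldsymbol 0$. The remaining structural properties follow from the common mechanism that any linear-algebraic obstruction produces a non-zero analytic function vanishing at all sufficiently large $T^k\balpha$, contradicting~(c). Concretely, if $\balpha$ is not $T$-independent via $\bmu\in\mathbb Z^n\setminus\{0\}$ on a progression $a+b\mathbb N$, then writing $\bmu=\bmu^+-\bmu^-$ with disjoint supports, the non-zero polynomial $g(\z) := \z^{\bmu^+}-\z^{\bmu^-}$ vanishes at each $T^{a+b\ell}\balpha$, and $f(\z):=\prod_{j=0}^{b-1}g(T^j\z)\in\mathbb C\{\z\}$ is a non-zero analytic function vanishing at every $T^k\balpha$ with $k\geq a$; if $T$ were singular, an integer left-kernel vector $\bmu$ gives $(T^k\balpha)^\bmu=1$ for all $k\geq 1$ and the same argument with $b=1$ applies; if $\lambda^N=1$ were an eigenvalue of $T$, an integer $\bmu$ with $\bmu^\top T^N=\bmu^\top$ yields $(T^{kN}\balpha)^\bmu=\balpha^\bmu$, and the modified polynomial $\z^{\bmu^+}-\balpha^\bmu\z^{\bmu^-}$ fed through the same product construction again contradicts~(c). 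Finally, for the positive eigenvector: combining (a) and (b), the sequence $-T^k\bu/\rho^k$ is bounded above and componentwise bounded below by $c\boldsymbol 1$, and passing to a suitable limit (taking Ces\`aro averages over residues of $k$ modulo the order of any oscillatory eigenvalue of $T$, if needed) extracts a strictly positive vector in the $\rho$-eigenspace.

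The main obstacle is the implication \emph{$T\in\M$ plus $T$-independence plus $T^k\balpha\to\boldsymbol 0$ implies~(c)}: this is exactly Masser's vanishing theorem from \cite{Mas82}, whose proof is genuinely non-elementary, combining multiplicity estimates for analytic functions with diophantine lower bounds on the heights of the orbit $\{T^k\balpha\}$. I would invoke it rather than reprove it, as was presumably done by the authors. Every other implication reduces to linear algebra, Perron--Frobenius considerations, and the explicit polynomial constructions above.
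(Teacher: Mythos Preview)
Your overall architecture matches the paper's: both reduce $(\Leftarrow)$ to Masser's vanishing theorem for~(c) and handle~(a),~(b) by elementary means, and both extract the three defining properties of $\M$ from~(a),~(b),~(c) in the $(\Rightarrow)$ direction. Your explicit polynomial constructions for $T$-independence and the root-of-unity obstruction are more detailed than the paper, which simply cites Kubota and says ``easy to see''; these are fine.

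There is, however, a genuine error in your $(\Leftarrow)$ argument for (a) and (b). You assert that $T\in\M$ forces $\rho$ to be the \emph{only} eigenvalue of modulus $\rho$; this is false. Take $T=\begin{pmatrix}0&2\\2&0\end{pmatrix}$: it is non-singular, has the positive eigenvector $(1,1)$ for $\rho=2$, its eigenvalues $2,-2$ are not roots of unity, so $T\in\M$; yet $-2$ also has modulus $\rho$. Consequently your claim that $T^k\bu/\rho^k$ \emph{converges} fails in general (in the example it oscillates between $\bu$ and its coordinate-swap), so your derivation of~(b) breaks down as written. The paper's Lemma~\ref{lem:U(T)} avoids any spectral-gap claim: it simply scales the positive eigenvector $\bmu$ so that $\bmu<L(\balpha)$, writes $L(\balpha)=\bmu+\bnu$ with $\bnu>0$, and uses $T^k\bnu\geq 0$ to get $(T^kL(\balpha))_i\geq\rho^k\mu_i$, which is~(b) directly. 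For~(a) the paper just cites Loxton--van der Poorten; the correct spectral statement (which does follow from the normal-form analysis) is that every eigenvalue of modulus $\rho$ is \emph{semisimple}, not that $\rho$ is unique on its circle.

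For the positive-eigenvector part of $(\Rightarrow)$ you take a different route from the paper. The paper argues via the Gantmacher normal form, showing that~(a) and~(b) force the block spectral-radius conditions that characterise existence of a positive Perron eigenvector. Your Ces\`aro-averaging idea can be made to work, but it needs one more ingredient you left implicit: condition~(a) gives that $(T/\rho)^k$ is \emph{bounded}, hence every eigenvalue of $T/\rho$ on the unit circle is semisimple, so the Ces\`aro means of $(T/\rho)^k$ converge to a projection onto the $\rho$-eigenspace; applying this to any accumulation point of $-T^k\bu/\rho^k$ (which lies in $\{\bw:w_i\geq c\}$ by~(b)) yields a strictly positive $\rho$-eigenvector. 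With that detail supplied, your argument is a legitimate alternative to the normal-form computation.
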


\begin{rem}
Note that it is easy to check whether or not a matrix belongs to the class $\M$.  
Furthermore, if $\alpha_1,\ldots,\alpha_n\in \mathbb C^\star$ are multiplicatively independent complex numbers,  
then $\balpha=(\alpha_1,\ldots,\alpha_n)$ is \emph{a fortiori} $T$-independent.  So, Theorem \ref{thm:masser} makes 
Theorems \ref{thm: permanence} and \ref{thm: purity} very easy to apply concretely.  
\end{rem}

Let us denote by $\mathcal U(T)$ the set of points $\balpha$ of $(\mathbb C^\star)^n$ such that Condition (b) holds. 
Loxton and van der Poorten \cite{LvdP77,LvdP77II} stated that when $T$ belongs to $\mathcal M$, 
the set $\mathcal U(T)$ is a punctured neighborhood of the origin. 
We provide here a proof of the following refinement.   

\begin{lem}\label{lem:U(T)}
Let $T\in\M$, then 
$$
\mathcal U(T)= \left\{\balpha \in  (\C^\star)^n : \lim_{k\to\infty}T^k\balpha = \boldsymbol 0\right\}\, .
$$
In particular, $\mathcal U(T)$  is open in $(\C^\star)^n$, and contains the punctured open unit disk 
of $\mathbb C^n$ with the maximum norm thereon.
\end{lem}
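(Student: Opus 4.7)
The first step is to pass to logarithmic coordinates. Setting $\bl := (\log|\alpha_1|,\ldots,\log|\alpha_n|)\in \mathbb R^n$, the monomial action of $T$ on $\balpha$ translates into the ordinary matrix action on $\bl$: one has
$$
\log|\alpha_i^{(k)}| = (T^k \bl)_i
$$
for all $k\geq 0$ and all $i$. In these coordinates, the condition $T^k\balpha\to\boldsymbol 0$ reads $T^k\bl\to-\boldsymbol\infty$ componentwise, while Condition~(b) becomes $(T^k\bl)_i\leq -c\rho^k$ for all $i$ and all $k$. The inclusion $\mathcal U(T)\subseteq\{\balpha : T^k\balpha\to\boldsymbol 0\}$ is then immediate, since Condition~(b) forces $|\alpha_i^{(k)}|\leq e^{-c\rho^k}\to 0$ when $\rho>1$.

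The content lies in the reverse inclusion. I would exploit the two defining properties of $T\in\M$: non-negativity of its entries, and the existence of a strictly positive eigenvector $\boldsymbol r$ with $T\boldsymbol r = \rho\boldsymbol r$ for $\rho=\rho(T)>1$. Non-negativity gives \emph{monotonicity}: $\boldsymbol u\leq \boldsymbol v$ componentwise implies $T\boldsymbol u\leq T\boldsymbol v$. Assuming $T^k\bl\to-\boldsymbol\infty$, one may pick $k_0$ large enough that $T^{k_0}\bl\leq -\boldsymbol r$ componentwise---this is possible precisely because $\boldsymbol r$ has strictly positive components. For $k\geq k_0$, the monotonicity combined with the eigen-equation then yields
$$
T^k\bl \;=\; T^{k-k_0}(T^{k_0}\bl)\;\leq\; T^{k-k_0}(-\boldsymbol r) \;=\; -\rho^{k-k_0}\boldsymbol r,
$$
so $\log|\alpha_i^{(k)}|\leq -(r_{\min}\rho^{-k_0})\rho^k$ for every $i$ and every $k\geq k_0$, where $r_{\min}=\min_i r_i>0$. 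One then adjusts the constant $c$ (or equivalently shifts $\balpha$ to $T^{k_0}\balpha$) to absorb the finitely many initial values $k<k_0$ into Condition~(b).

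Finally, the same characterization will handle openness and the punctured unit disk. The strict componentwise inequality $T^{k_0}\bl<-\boldsymbol r$ is an open condition on $\balpha\in(\C^\star)^n$, since each $\log|\alpha_i|$ depends continuously on $\balpha$; thus $\mathcal U(T)$ is a countable union of open sets and is itself open. When $|\alpha_i|<1$ for every $i$, $\bl<\boldsymbol 0$ componentwise, so one may take $\epsilon := \min_i(-l_i/r_i)>0$ to get $\bl\leq -\epsilon\boldsymbol r$; applying $T^k$ and using $T^k\boldsymbol r=\rho^k\boldsymbol r$ then gives directly $T^k\bl\leq -\epsilon\rho^k\boldsymbol r$, which is Condition~(b) with $c=\epsilon r_{\min}$. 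The main conceptual obstacle is recognizing that the monotonicity of $T$ on $\mathbb R^n$, together with the positive Perron direction, is exactly the right mechanism to propagate a single sufficiently deep iterate into a uniform exponential decay rate; the remaining estimates are then elementary.
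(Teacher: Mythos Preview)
Your proof is correct and follows essentially the same approach as the paper's: both pass to logarithmic coordinates, exploit the positive Perron eigenvector together with the monotonicity of $T$ coming from non-negativity of its entries, show first that the punctured unit polydisk lies in $\mathcal U(T)$, and then handle the general case by iterating until the orbit enters the polydisk. You are somewhat more explicit than the paper about the openness claim (the paper leaves it implicit in the characterization), and you spell out the trivial inclusion $\mathcal U(T)\subseteq\{T^k\balpha\to\boldsymbol 0\}$, but the substance is identical.
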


\begin{proof}
Let us first show that $\mathcal U(T)$ contains the punctured open unit disk of $(\C^\star)^n$. 
Let $\balpha:=(\alpha_1,\ldots,\alpha_n) \in (\C^\star)^n$ such that $\Vert\balpha\Vert< 1$. Set 
$$
  L(\balpha) := \left( - \log |\alpha_{1}|,\ldots,-\log |\alpha_{n}| \right) > 0 \, .
$$
By assumption, $T$ has a positive eigenvector $\bmu$ associated with $\rho(T)$. 
We can assume that all coordinates of $\bmu$ are smaller than those of $L(\balpha)$. 
We set $\bnu = L(\balpha) - \bmu > 0$. Following \cite[Lemma 3]{LvdP77}, we get that  
$$
- \log \Vert T^{k}\balpha\Vert  = \Vert  T^{k}(L(\balpha)) \Vert  = \Vert T^{k}(\bmu) 
+ T^{k}(\bnu)\Vert  \geq \Vert  T^{k}(\bmu)\Vert  = \rho^{k}\Vert \bmu\Vert  \, ,
$$
for all $k\in \mathbb N$ for $T^k\bnu$ has positive coordinates. Condition (b) is thus satisfied inside the 
unit open disk of  $(\C^\star)^n$. 
Now if $\balpha \in (\C^\star)^n$ is such that $\lim_{k\to\infty}T^k\balpha =\boldsymbol 0$, then there exists $k_0$ such 
that $T^{k_0}\balpha \in \mathcal U(T)$. 
It follows that $\balpha$ also belongs to $\mathcal U(T)$.
\end{proof}

Now we show that only matrices in the class  $\M$ can be admissible. 

\begin{lem}
\label{lem:ClasseM}
Let us assume that there exists an algebraic point $\balpha$ such that the pair $(T,\balpha)$ is admissible.  
Then $T$ belongs to $\M$.
\end{lem}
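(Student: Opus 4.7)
The plan is to check separately each of the three defining conditions (i), (ii), (iii) of the class $\M$. In each case we will show that if the condition on $T$ fails then we can produce a non-zero polynomial vanishing on $T^k\balpha$ for all but finitely many $k$, contradicting Condition~(c) of admissibility. Suppose first that $T$ is singular. Then $T^T$ has a non-trivial kernel, so we can pick $\bmu\in\Z^n\setminus\{\boldsymbol 0\}$ with $T^T\bmu=\boldsymbol 0$. From the monomial identity $(T\balpha)^\bmu=\balpha^{T^T\bmu}=1$ we deduce by iteration that $(T^k\balpha)^\bmu=1$ for every $k\geq 1$. Writing $\bmu=\bmu^+-\bmu^-$ as the difference of its positive and negative parts (which have disjoint supports), the polynomial $f(\z)=\z^{\bmu^+}-\z^{\bmu^-}$ is non-zero, as its two monomials are distinct, and vanishes at every $T^k\balpha$, $k\geq 1$: this contradicts~(c).

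Next, suppose $T$ has an eigenvalue that is a $d$-th root of unity. Then $1$ is an eigenvalue of $(T^d)^T$, so some $\bmu\in\Z^n\setminus\{\boldsymbol 0\}$ satisfies $(T^d)^T\bmu=\bmu$. Since $T^r$ and $T^d$ commute, $(T^r)^T\bmu$ is again a fixed vector of $(T^d)^T$, and one computes $(T^{dk+r}\balpha)^\bmu=(T^r\balpha)^\bmu=:\gamma_r\in\Q^\star$ for every $k\geq 0$ and every $r\in\{0,\ldots,d-1\}$. The non-zero polynomial
$$
f(\z)=\prod_{r=0}^{d-1}\bigl(\z^{\bmu^+}-\gamma_r\z^{\bmu^-}\bigr)
$$
then vanishes at every $T^k\balpha$, contradicting~(c) again.

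To establish the existence of a positive eigenvector for $\rho$, set $\v:=(-\log|\alpha_1|,\ldots,-\log|\alpha_n|)\in\mathbb R^n$. Condition~(b) reads $(T^k\v)_i\geq c\rho^k$ for all $i$ and all $k\geq 1$, while Condition~(a) forces the spectral radius of $T$ to be at most $\rho$ and every eigenvalue of modulus $\rho$ to be semi-simple (no non-trivial Jordan block). Decompose
$$
\v=\v_\rho+\sum_{i\geq 1}\v_{\lambda_i}+\v_<,
$$
where $\v_\rho$ is the projection onto $\ker(T-\rho I)$, the $\v_{\lambda_i}$ are the projections onto the eigenspaces for the remaining modulus-$\rho$ eigenvalues $\lambda_i$, and $\v_<$ lies in the sum of generalised eigenspaces for eigenvalues of modulus $<\rho$. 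Then
$$
\frac{T^k\v}{\rho^k}=\v_\rho+\sum_{i\geq 1}\Bigl(\frac{\lambda_i}{\rho}\Bigr)^k\v_{\lambda_i}+\frac{T^k\v_<}{\rho^k},
$$
where the last summand tends to $\boldsymbol 0$. Taking Ces\`aro averages over $k=1,\ldots,K$ annihilates each geometric sum $\tfrac{1}{K}\sum_{k=1}^K(\lambda_i/\rho)^k$, since $\lambda_i/\rho$ lies on the unit circle and differs from $1$, so these Ces\`aro averages of $T^k\v/\rho^k$ converge to $\v_\rho$. Because every term in the average has all coordinates $\geq c$, so does $\v_\rho$; and by semi-simplicity of $\rho$, $\v_\rho$ is a genuine eigenvector of $T$ with positive coordinates.

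The main obstacle is this third step: one has to combine the growth bound~(a), which controls the Jordan structure on the circle of radius $\rho$, with the positivity lower bound~(b), using a Ces\`aro averaging argument to absorb the contribution of the other possible eigenvalues of modulus $\rho$ without having to invoke any refined Perron--Frobenius information on their arguments. By contrast, steps (i) and (ii) are softer: each reduces to exhibiting a single non-zero polynomial that vanishes along the whole orbit of $\balpha$ under $T$.
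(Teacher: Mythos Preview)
Your proof is correct. For parts (i) and (ii) you give the standard explicit-polynomial arguments that the paper attributes to Kubota, so those coincide in spirit with the paper's treatment.

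For part (iii) you take a genuinely different route. The paper invokes the Perron--Frobenius normal form of a non-negative matrix (the block lower-triangular decomposition into irreducible pieces) and the known criterion for the existence of a positive eigenvector in terms of the spectral radii of the diagonal blocks; it then rules out each bad configuration separately, using~(b) to exclude a diagonal block with too small spectral radius and~(a) to exclude a sub-diagonal block with full spectral radius. Your argument bypasses this structure theory entirely: you read off from~(a) that the spectral radius is at most $\rho$ and that all eigenvalues on the circle of radius $\rho$ are semi-simple, then use the spectral decomposition of $\v$ and a Ces\`aro average to kill the contributions of the non-$\rho$ eigenvalues on that circle, leaving a vector $\v_\rho\in\ker(T-\rho I)$ whose coordinates inherit the uniform positive lower bound from~(b). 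This is cleaner and more self-contained: it uses only elementary linear algebra, and in particular does not appeal to Condition~(c) at all for this step (the paper's argument for its normal-form condition~(1) does). The one point you leave implicit, namely that $\rho$ actually equals $\rho(T)$, follows immediately once you know $\v_\rho\neq 0$ (so $\rho$ is an eigenvalue) combined with $\rho(T)\leq\rho$.
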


\begin{proof}
Kubota \cite{Ku77} already noticed that if $T$ has zero or a root of unity as an eigenvalue, then there do not 
exist any point $\balpha$ satisfying Condition (c). We show now that $\rho = \rho(T)$ and, 
using Conditions (a) and (b), that $T$ has eigenvector with positive coordinates associated with the eigenvalue $\rho$. 
Let us recall some classical results about matrices with non-negative integer coefficients (see for instance \cite{Grantmacher}).  
A matrix  $T$ with non-negative coefficients is said to be \emph{irreducible} if there is no permutation such that 
$T$ takes the form 
$$
\left(\begin{array}{cc} A & 0 \\ B & C \end{array} \right)\, ,
$$
where $A$ and $C$ are square matrices. By Frobenius' theorem \cite[Chapter III, Theorem 2]{Grantmacher}, if 
$T$ is irreducible, then (iii) holds. Furthermore, if $T$ has exactly $h$ eigenvalues $\lambda_1,\ldots,\lambda_h$ of modulus 
$\rho(T)$, then $\lambda_i^h=\rho(T)$ for every $i$, $1 \leq i \leq h$. When $h=1$, $T$ is said to be {\it primitive}. 
Every matrix  $T$ with non-negative integer coefficients can be written, up to permutation, in the form 
\begin{equation}
\label{normalform}
T=\left(\begin{array}{cccccc} 
T_1 & & & & & \\
 & \ddots &  & &0 &\\
  0  &  & T_\kappa & & &  \\ 
 S_{1,1} & \cdots & S_{1,\kappa} & T_{\kappa+1} &  &\\
 \vdots & & & \ddots& \ddots &\\
 S_{\nu,1} & & \cdots&  &  & T_{\kappa+\nu}\end{array} \right) \,,
\end{equation}
where $T_1,\ldots,T_{\kappa+\nu}$ are irreducible square matrices, and such that  for each $i$, $1\leq i \leq \nu$, at least one 
of the matrices $S_{i,j}$, $1 \leq j < i$ is non-zero. This expansion is called the \emph{normal form} of $T$ and is unique, 
up to permutations of the blocks $T_1,\ldots,T_\kappa$, the blocks $T_{\kappa+1},\ldots,T_{\kappa+\nu}$, and also of 
the indices inside each block \cite[Chapter 4]{Grantmacher}. Following \cite[Chapter 3, Theorem 6]{Grantmacher}, 
$T$ satisfies Condition (iii) if and only if its normal form satisfies the following two conditions. 
\begin{enumerate}[label=\roman*)]
\item[(1)] $\rho(T_1)=\cdots=\rho(T_\kappa)=\rho(T)$,

\smallskip

\item[(2)] $\rho(T_{\kappa+i})<\rho(T)$ for $1 \leq i \leq \nu$.
\end{enumerate}
Let us first show that $\rho=\rho(T)$. Condition (a) ensures that the coefficients of $T^k$ are in $\mathcal O(\rho^k)$. 
This implies that $\rho(T) \leq \rho$. Let us denote by $L$ the map 
$$
L : \z \mapsto (- \log |z_1|,\ldots,-\log |z_n|)\, 
$$
and $\x = L(\balpha)$. By construction, one has 
$$
L(T^k\balpha) =  T^k(\x) \, .
$$
Condition (b) ensures that $\Vert T^k(\x)\Vert  \geq \gamma \rho^k$ for some positive number $\gamma$. 
Thus $\rho \leq \rho(T)$ which gives that $\rho = \rho(T)$. 
We now show that the normal form of $T$ satisfies Conditions (1) 
and (2). We argue by contradiction. We assume that (1) is not satisfied. 
Then there is a matrix $T_i$, $1\leq i \leq \kappa$, such that $\rho(T_i) < \rho(T)$. 
Without loss of generality, we can assume that $i=1$. 
Let us denote by $\x_1$ the restriction of $\x$ to the coordinates of the block $T_1$, and by $\y$ the projection of $\x_1$ 
on the eigenspace associated to the eigenvalue  $\rho(T_1)$ of $T_1$. The matrix $T_1$ being irreducible, one has  
$$
T_1^{hk}(\x_1) = \rho(T_1)^{hk}\y + \mathcal O(\rho'^{hk})\, 
$$
for some $\rho' < \rho(T_1)$. This contradicts Condition (c). We thus have 
$\rho(T_1)=\cdots=\rho(T_\kappa)=\rho$. 
Let us now assume that  $\rho(T_{\kappa+j})=\rho(T)$ for some $j$, $1 \leq j\leq \nu$. 
For the sake of simplicity, 
we assume that the normal form of $T$ is 
$$
\left(\begin{array}{cc}
T_1 & 0
\\ S & T_2
\end{array}
\right)\, ,
$$
where  $S \neq 0$, $\rho(T_1)=\rho(T_2)$, and $T_1, T_2$ are irreducible. 
The proof is similar to this case when there are more blocks. 
Raising $T$ to the power of $h$ if necessary, we can assume that $h=1$.  The matrices $T_1$ et $T_2$ are thus 
primitive. For every positive integer $k$, one has 
$$
T^k=\left(\begin{array}{cc}
T_1^k & 0
\\ \sum_{j=0}^{k-1} T_2^jST_1^{k-j-1} & T_2^k
\end{array}
\right)\, .
$$
As $T_1$ and $T_2$ are primitive and $S$ is non-zero, the matrix $\sum_{j=0}^{k-1} T_2^jST_1^{k-j-1}$ 
has a coefficient, that is asymptotically equivalent to $\gamma k\rho^k$ as $k$ tends to infinity, for some positive number 
$\gamma$. This contradicts Condition (a). 
A similar argument yields the general case. 
\end{proof}

\begin{proof}[Proof of Theorem \ref{thm:masser}] 
Let us assume that $(T,\balpha)$ is admissible. By Lemma \ref{lem:ClasseM}, $T$ belongs to $\M$. Then Condition (b) implies that  
$\lim_{k\to\infty}T^k\balpha=0$. On the other hand, it is easy to see that Condition (c) implies that $\balpha$ is $T$-independent. 

Conversely, let us assume that $T$ belongs to $\M$, and that $\balpha\in(\Q^\star)^n$ is $T$-independent and 
satisifes $\lim_{k\to\infty}T^k\balpha=0$. By Lemma \ref{lem:U(T)}, $\balpha\in \mathcal U(T)$. Following Loxton and van der 
Poorten \cite{LvdP77}, since the matrix $T\in\M$ and $\balpha\in \mathcal U(T)$, Conditions (a) and (b) are satisfied with $\rho = \rho(T)$. 
Finally, Masser's vanishing theorem \cite{Mas82} implies that Condition (c) holds since $\balpha$ is $T$-independent. 
Hence, the 
pair $(T,\balpha)$ is admissible.  
\end{proof}

\section{A new vanishing theorem}\label{sec: vanishing}

As already mentioned in the introduction, a well-known feature of Mahler's method  
is the great importance of finding natural and general conditions that ensure   
non-vanishing conditions of type (c). Of course, our goal is to obtain a \emph{vanishing theorem} 
that can be applied to transformation matrices and points which are as general as possible. 
Our contribution to this problem is Theorem \ref{thm:lemmedezero}. 

In the case of a single transformation, 
after first results of Mahler, Kubota, Loxton and van der Poorten, 
Masser \cite{Mas82} solved this problem in a rather definitive way.   
However, in order to deal with several Mahler systems associated 
with different transformations, a more general vanishing theorem is needed. 
First results of this type were proved 
by Ku. Nishioka \cite{Ni94} and again by Masser \cite{Mas99}. 
More recently, Corvaja and Zannier \cite[Theorem 3]{CZ05} 
used the subspace theorem to prove  
a general theorem about the vanishing at $S$-units of analytic multivariate power series with algebraic coefficients. 
These authors already noticed that their result could be applied to Mahler's method.  
Though it is restricted to power series with algebraic coefficients, 
 the vanishing theorem of Corvaja and Zannier is very flexible. 
In this section, this flexibility is used to derive from their result our own vanishing theorem.  

In the framework of Mahler's method, several vanishing theorems have been formulated 
by saying that a non-zero multivariate power series cannot vanish  
at all points in some well-structured large sets, the latter are obtained by the iteration of the transformation matrix 
and usually involve arithmetic progressions.    
In order to prove Theorem \ref{thm: purity}, we need to replace these \og well-structured sets\fg{} by 
sets which remain large but offer more flexibility.  
We use the notion of  a piecewise syndetic set, which is classical in Ramsey theory and 
especially in its ergodic counterpart. As we just said, it can be though of as a notion of largeness for subsets of 
$\mathbb N$. Furthermore, Brown's lemma (see Lemma \ref{lem:syndetique}) shows that such sets are partition regular, and 
thus much more robust in terms of partitions than arithmetic progressions are.

\begin{defi}
A set $\mL\subset\N$ is said to be \emph{piecewise syndetic} if there exists a natural number $B$  such that  
for any given integer $M\geq 2$  there exist $l_1 < \cdots < l_M$ in $\mL$ such that
$$
l_{i+1} - l_i \leq B, \qquad 1 \leq i < M\, .
$$
In this case, we say that $B$ is a {\it bound}  for $\mL$.
\end{defi}

Let us recall that a subset of $\mathbb N$ is  said to be {\it syndetic}, or sometimes \emph{relatively dense}, if it has bounded gaps. 
A subset of $\mathbb N$ is said to be \emph{thick} if it contains arbitrarily long intervals. Thus piecewise syndetic sets are those that 
can be obtained as the intersection of  a syndetic set and a thick set. In the sequel of this section, as well as all along 
Section \ref{sec: admgen}, we will use heavily the following results. 

\begin{lem}\label{lem:syndetique}
Let $\mL \subset \N$ be a piecewise syndetic set with bound $B$. Then the following properties hold.  

\begin{enumerate}[label=(\roman*)]
\item[{\rm (i)}] If $\mL \subset \mL' \subset \N$, then $\mL'$ is also piecewise syndetic.
\item[{\rm (ii)}] If $\mL \subset \cup_{i=1}^s \mL_i$, then at least one of the $\mL_i$'s is piecewise syndetic.
\item[{\rm (iii)}] Let $l_0$ be a natural number. The set 
$$
\mL_0:=\left\{l \in \mL : (l+\{l_0,\ldots,l_0+B\}) \cap \mL \neq \emptyset \right\}
$$
is piecewise syndetic.
\item[{\rm (iv)}] The set $\mL$ contains arbitrarily long arithmetic progressions. 
\end{enumerate} 
\end{lem}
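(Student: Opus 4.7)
The plan is to handle (i) immediately, deduce (iii) by a direct counting argument, tackle (ii) --- which is essentially Brown's lemma --- by induction on $s$ with $s=2$ as the core case, and derive (iv) from van der Waerden's theorem. Part (ii) is the main obstacle, as it requires exploiting the non-piecewise-syndeticity of one summand to force syndetic density on the other. Part (i) is immediate: the inclusion $\mL \subset \mL'$ transfers the structure verbatim, since for every $M \geq 2$ the $M$ elements $l_1 < \cdots < l_M$ of $\mL$ with consecutive gaps $\leq B$ also lie in $\mL'$.

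For (iii), fix $N \geq 1$ and, by piecewise syndeticity of $\mL$ with bound $B$, select $l_1 < \cdots < l_{N + l_0 + B}$ in $\mL$ with consecutive gaps $\leq B$. For every $i \leq N$, the $l_0 + B$ distinct integers $l_{N+1}, \ldots, l_{N+l_0+B}$ all exceed $l_N$, whence $l_{N+l_0+B} \geq l_N + l_0 + B \geq l_i + l_0 + B$. The bounded-gap condition then forces some $l_j \in [l_i + l_0, l_i + l_0 + B] \cap \mL$, so $l_i \in \mL_0$. Thus $l_1, \ldots, l_N$ furnish $N$ elements of $\mL_0$ with consecutive gaps $\leq B$, proving that $\mL_0$ is $B$-piecewise syndetic.

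For (ii), induction on $s$ reduces us to $s=2$: assume $\mL \subset \mL_1 \cup \mL_2$, $\mL$ is $B$-piecewise syndetic, and $\mL_2$ is not piecewise syndetic; we show $\mL_1$ is piecewise syndetic. Since $\mL_2$ is in particular not piecewise syndetic with gap bound $B$, there exists $M^* \geq 2$ such that any $M^*$ elements of $\mL_2$ written in increasing order possess at least one consecutive gap exceeding $B$. Take an arbitrarily long interval $I$ on which every sub-interval of length $B$ meets $\mL$ (such $I$ exists by $B$-piecewise syndeticity of $\mL$). Any sub-interval $J \subset I$ of length $BM^*$ partitions into $M^*$ length-$B$ blocks, so $|J \cap \mL| \geq M^*$. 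If $|J \cap \mL_2| < M^*$, then $|J \cap \mL_1| \geq 1$; otherwise, the first $M^*$ elements of $\mL_2 \cap J$ --- which are also consecutive in $\mL_2$ because $J$ is an interval --- must have two adjacent ones separated by more than $B$, producing a length-$\geq B$ interval inside $J$ disjoint from $\mL_2$. By $B$-density of $\mL$ on $I$, this interval meets $\mL$, hence meets $\mL_1$. In either case $J \cap \mL_1 \neq \emptyset$, so consecutive elements of $\mL_1 \cap I$ are separated by at most $BM^*$; letting $|I|$ grow, $\mL_1$ is $(BM^*)$-piecewise syndetic.

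For (iv), given $k \geq 1$, let $W := W(B, k)$ denote the van der Waerden number for $B$ colors and progressions of length $k$. By piecewise syndeticity of $\mL$, choose an interval $I$ of length $WB$ on which every sub-interval of length $B$ meets $\mL$, and partition $I$ into $W$ consecutive blocks $J_1, \ldots, J_W$ of length $B$. Select $m_i \in J_i \cap \mL$ and write $m_i = s_i + r_i$, where $s_i$ is the starting point of $J_i$ and $r_i \in \{0, \ldots, B-1\}$. The map $i \mapsto r_i$ colors $\{1, \ldots, W\}$ with $B$ colors, so van der Waerden's theorem produces indices $i_1 < \cdots < i_k$ in arithmetic progression of common difference $d$ with a common value of $r$. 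Since $s_i = s_1 + (i-1)B$, the elements $m_{i_1}, \ldots, m_{i_k}$ form an arithmetic progression in $\mL$ of common difference $dB$ and length $k$.
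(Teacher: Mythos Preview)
Your proof is correct. Parts (i) and (iii) match the paper's treatment: (i) is immediate, and your counting argument for (iii) is essentially the same as the paper's, with only cosmetic differences in how the indices are set up.

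Where you genuinely diverge is in (ii) and (iv). The paper simply cites Brown's lemma for (ii) and Szemer\'edi's theorem for (iv), offering no argument. You instead supply a self-contained proof of Brown's lemma via the $s=2$ reduction, exploiting the failure of $B$-syndeticity in $\mL_2$ to force gaps that $\mL_1$ must fill; this is a clean and standard route. For (iv), your use of van der Waerden's theorem in place of Szemer\'edi's is a real improvement: piecewise syndeticity already provides the density-on-long-intervals structure, so the coloring argument with the residues $r_i$ suffices, and there is no need to invoke the far deeper density result. The paper's citation of Szemer\'edi here is overkill. Your approach thus buys self-containment and, for (iv), a considerably more elementary proof, at the cost of a page of argument the paper avoids by citation.
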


\begin{proof}
The point (i) immediately follows from the definition, while points (ii) and (iv)  correspond to classical results 
respectively known as  Brown's lemma (see \cite{Brown}) and Szemer\'edi's theorem \cite{Sz}. 
Let us prove (iii). Let $l_0$ and $M$ be two natural numbers and let $a$ 
be the smallest integer such that $aB \geq l_0$. 
Since $\mL$ is piecewise syndetic, there exist a sequence $l_1 < l_2 < \cdots < l_{(a+M)B}$ of elements of $\mL$ 
such that $l_{i+1}-l_i < B$. Let $i \leq M$, we have that $l_{i+aB} \geq l_0$, then there exists an integer $j \leq aB$ 
such that $l_0 \leq l_{i+j} \leq l_0+B$. Thus we have $l_i \in \mL_0$. This shows that $l_1,\ldots,l_M$ 
all belong in the set $\mL_0$. Consequently, $\mL_0$ is piecewise syndetic.  
\end{proof}

In order to prove Theorem \ref{thm: purity}, we need the following result that refines the vanishing 
theorem of Corvaja and Zannier in the context of Mahler's method, 
and also that extends it to series with coefficients in any finite dimensional $\Q$-vector space. 

\begin{thm}
\label{thm:lemmedezero}
Let $T_1,\ldots,T_r$ be matrices in $\M$ such that  
$$
\log \rho(T_i)/\log \rho(T_j)\not\in \mathbb Q\, \;\;\;\;\forall i\not=j\,.
$$  
Let us denote by $n_i$ the size of the matrix $T_i$ and set $N:=\sum_{i=1}^r n_i$.  
Set 
\begin{equation}\label{eq:Theta}
\Theta:=\left( \frac{1}{\log \rho(T_1)},\ldots,\frac{1}{\log \rho(T_r)}\right) \, .
\end{equation}
For every $l \in \mathbb N$, we let 
$\k_l:=(k_{l,1},\ldots,k_{l,r})$ denote a $r$-tuple of positive integers. Let us assume that  
\begin{equation}\label{eq:bounded}
\Vert \k_l -l \Theta \Vert = \mathcal O(1) \, .
\end{equation}
Let $\balpha:=(\balpha_1,\ldots,\balpha_r)$ be an algebraic point in $(\C^\star)^N$ 
such that $\balpha_i$ is $T_i$-independent for every $i$.  
Let $L \subset \C$ be a finite dimensional $\Q$-vector space and let 
$g \in L\{\z\}$ be a non-zero analytic function. Then the set
$$
\left\{ l \in \mathbb N : g(T_1^{k_{l,1}}\balpha_1,\ldots,T_r^{k_{l,r}}\balpha_r) = 0 \right\}
$$
is not piecewise syndetic. 
\end{thm}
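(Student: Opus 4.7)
The plan is to argue by contradiction, assuming that
$$
\mL := \{l \in \mathbb N : g(T_1^{k_{l,1}}\balpha_1,\ldots,T_r^{k_{l,r}}\balpha_r) = 0\}
$$
is piecewise syndetic, and to derive a contradiction from the Corvaja--Zannier vanishing theorem \cite[Theorem 3]{CZ05}. The key rigidity inputs will be the $T_i$-independence of each $\balpha_i$ and the irrationality of the ratios $\log\rho(T_i)/\log\rho(T_j)$.

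First I would \emph{uniformise} the sequence $\k_l$. The offset $\k_l - \lfloor l\Theta\rfloor$ lies in a finite subset of $\mathbb Z^r$ by \eqref{eq:bounded}, so partitioning $\mL$ according to its value and applying Brown's lemma (Lemma \ref{lem:syndetique}(ii)), I may restrict to a piecewise syndetic subset $\mL'\subset \mL$ on which this offset is a fixed vector. Szemerédi's theorem (Lemma \ref{lem:syndetique}(iv)) then provides arbitrarily long arithmetic progressions inside $\mL'$. Next, I would reduce to the case where $g$ has coefficients in $\Q$: fixing a $\Q$-basis $\ell_1,\ldots,\ell_s$ of $L$ and writing $g = \sum_{i=1}^s \ell_i g_i$ with $g_i \in \Q\{\z\}$, I would induct on $s$ to exhibit some non-zero $g_i$ vanishing on a piecewise syndetic subset of $\mL'$.

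With $g$ now assumed to lie in $\Q\{\z\}$, the coordinates of the points $\balpha^{(l)} := (T_1^{k_{l,1}}\balpha_1,\ldots,T_r^{k_{l,r}}\balpha_r)$ are $S$-units in a common number field $K = \Q(\balpha)$, where $S$ is the finite set of places of $K$ above the primes occurring in the numerators and denominators of the coordinates of the $\balpha_i$'s. Condition~(b) of admissibility combined with \eqref{eq:bounded} implies that every coordinate of $\balpha^{(l)}$ tends to $0$ at an essentially uniform rate $\exp(-c\, e^l)$ as $l \to \infty$, while the spectral irrationality hypothesis rules out hidden rational relations between the logarithmic heights of coordinates in different blocks. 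This places us in the scope of \cite[Theorem 3]{CZ05}, whose conclusion yields, on an infinite subset of $\mL'$ and ultimately along an arithmetic progression extracted by Szemerédi, a non-trivial multiplicative relation of the form $\prod_{i=1}^r (T_i^{k_{l,i}}\balpha_i)^{\bmu_i} = 1$ with some non-zero exponent tuples $\bmu_i$. After further pigeonholing, this relation either lives entirely within a single block $i$ (contradicting the $T_i$-independence of $\balpha_i$) or mixes blocks in a way that contradicts $\log\rho(T_i)/\log\rho(T_j)\not\in \mathbb Q$.

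The main obstacle is the reduction to $\Q$-coefficients. The values $g_i(\balpha^{(l)})$ are in general transcendental, so the identity $\sum_{i=1}^s \ell_i g_i(\balpha^{(l)}) = 0$ cannot be split into $s$ independent scalar vanishing conditions by invoking only the $\Q$-linear independence of the $\ell_i$. Making the induction on $s = \dim_\Q L$ work will require exploiting the full partition-regularity of piecewise syndetic sets (Lemma \ref{lem:syndetique}, especially parts (ii) and (iii)) together with the analytic structure of the $g_i$'s, for instance by a Wronskian-type construction producing, on a still piecewise syndetic subset, a new non-zero analytic relation whose coefficients live in a proper $\Q$-subspace of $L$.
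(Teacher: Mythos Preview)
Your overall architecture matches the paper's: induction on $\dim_{\Q} L$, the base case handled via Corvaja--Zannier \cite[Theorem~3]{CZ05}, and the inductive step via a Wronskian-type construction that drops the dimension. Two points, however, are more delicate than your sketch suggests.

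\textbf{The multiplicative relations.} Your dichotomy ``either the relation lives in a single block (contradicting $T_i$-independence) or it mixes blocks (contradicting the irrationality of $\log\rho(T_i)/\log\rho(T_j)$)'' is not correct as stated. A mixed-block relation $\prod_i (T_i^{k_{l,i}}\balpha_i)^{\bmu_i}=u$ does \emph{not} by itself contradict the spectral irrationality; nothing prevents the specific points $\balpha_i$ from satisfying such a relation for many $l$. What the paper does (Lemma~\ref{lem:admislocglob}) is more indirect: one takes the $\mathbb Q$-span of the log-vectors $T_{\k_l}(\log\balpha)$ over a piecewise syndetic set, shows using Lemma~\ref{lem:syndetique}(iii) that it is invariant under some $T_{\be}$ with $\be=\k_{l+e}-\k_l$, and \emph{then} invokes the irrationality to ensure that for $e$ large enough the eigenvalues of $T_i^{e_i}$ and $T_j^{e_j}$ are disjoint, forcing this invariant subspace to split blockwise. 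Only after this splitting does one obtain a single-block relation, and then Szemer\'edi plus a short descent yields an arithmetic progression on which $(T_i^{\cdot}\balpha_i)^{\bnu}=1$, contradicting $T_i$-independence. You also need a separate reduction from $(T_{\k_l}\balpha)^{\bmu}=u$ to $(T_{\k_l}\balpha)^{\bmu'}=1$ (the paper does this by taking ratios along consecutive elements of the piecewise syndetic set).

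\textbf{The inductive step.} Your Wronskian idea is exactly right: the paper sets $h_{\be}(\z)=\sum_{i<t} a_i\bigl(g_i(\z)g_t(T_{\be}\z)-g_i(T_{\be}\z)g_t(\z)\bigr)$, which lies in a $(t-1)$-dimensional subspace and vanishes along a piecewise syndetic set, so $h_{\be}\equiv 0$ by induction. But from $g_i(\z)g_t(T_{\be}\z)=g_i(T_{\be}\z)g_t(\z)$ you still need to conclude that $g_i/g_t$ is constant; this is not automatic and requires Nishioka's theorem \cite[Theorem~3.1]{Ni_Liv} (a power series $h$ with $h(T\z)=ch(\z)+d$ is constant when $T$ has no root-of-unity eigenvalue). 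Without this step the induction does not close.
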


 Applying Mahler's method to several Mahler systems requires some uniform speed of convergence to the origin for the orbits 
 of each algebraic point $\balpha_i$ under the matrix transformations $T_i$.  As noticed by van der Poorten \cite{vdP87}, one way to overcome 
 this difficulty is to iterate  each transformation $T_i$  $k_i$-times, and to choose the iteration vector $\k =(k_1,\ldots,k_r)$ so that asymptotically 
 the matrices $T_i^{k_i}$ have essentially the same radius of convergence.  
 As we shall see in Section \ref{sec: admgen}, this forces us to consider only iteration vectors $\k$ that remain at a bounded 
 distance from the real line $\mathbb R\Theta$, where $\Theta$ is defined by \eqref{eq:Theta}. This explains why the assumption \eqref{eq:bounded} 
 is natural in this framework.  
In the rest of this section, we set $T_{\k_l}\balpha:=(T_1^{k_{l,1}}\balpha_1,\ldots,T_r^{k_{l,r}}\balpha_r)$. 
Before proving Theorem \ref{thm:lemmedezero}, we need the following result.

\begin{lem}
\label{lem:admislocglob}
Let us keep the assumptions of Theorem  \ref{thm:lemmedezero}. 
Then, 
for every non-zero integer $N$-tuple $\bmu$, the set  
$$
\mL_0 := \left\{ l \in \N : (T_{\k_l}\balpha)^{\bmu} = 1 \right\}\, 
$$
is not piecewise syndetic.
\end{lem}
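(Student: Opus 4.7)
I would argue by contradiction: assume $\mL_0$ is piecewise syndetic with bound $B$, and derive a contradiction with the $T_{i_0}$-independence of $\balpha_{i_0}$ for some index $i_0$ with $\bmu_{i_0}\neq 0$.

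First, I would normalize the iteration vectors. The hypothesis $\|\k_l - l\Theta\|=\mathcal O(1)$ implies that the integer vector $\k_l - \lfloor l\Theta\rfloor$ takes only finitely many values in $\mathbb Z^r$. Partitioning $\mL_0$ by these values and applying Brown's lemma (Lemma \ref{lem:syndetique}(ii)), I may refine $\mL_0$ to a piecewise syndetic subset on which $\k_l = \lfloor l\Theta\rfloor + \bc$ for a fixed $\bc$. Next, by Szemer\'edi (Lemma \ref{lem:syndetique}(iv)), this refined set contains arbitrarily long arithmetic progressions $\{a+jd:0\le j<M\}$. On such a progression, the consecutive increments $\k_{a+(j+1)d}-\k_{a+jd}$ lie in a bounded subset of $\mathbb Z^r$, hence take only finitely many values; iterating van der Waerden coordinate by coordinate and then diagonalizing over $M\to\infty$ yields fixed $d\in\N$ and $\q\in\mathbb Z^r$ such that arbitrarily long APs $\{a_M+jd:0\le j<M\}\subseteq\mL_0$ exist with $\k_{a_M+jd}=\k_{a_M}+j\q$.

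I would then exploit the irrationality hypothesis to decouple the contributions of the different transformations. On each such AP, the identity $\prod_i\balpha_i^{(T_i^t)^{k_{a_M,i}+jq_i}\bmu_i}=1$ gives, after taking the real part of the logarithm, $\sum_i F_i(k_{a_M,i}+jq_i)=0$, where $F_i(k):=\langle\bmu_i,T_i^k\log|\balpha_i|\rangle$ is a $\mathbb C$-valued linear recurrence in $k$ whose characteristic roots are the eigenvalues of $T_i$. The dominant roots, viewed as functions of $j$, are $\rho(T_i)^{q_i}$, and since $\log\rho(T_i)/\log\rho(T_j)\notin\mathbb Q$, these are pairwise multiplicatively independent. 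For $M$ exceeding the total combined degree of the recurrence, a Vandermonde-type argument forces the polynomial coefficient of each dominant exponential to vanish identically in $j$. Iterating this elimination downward through the eigenvalue hierarchy of each $T_i$ (grouping together contributions that share a common characteristic root) eventually forces each $F_i$ to vanish on the AP. Then Skolem-Mahler-Lech applied to $F_i$ shows that its zero set in $\N$ contains an infinite arithmetic progression; after treating the imaginary part modulo $2\pi\mathbb Z$ by an analogous argument, one produces an infinite AP of $k$'s on which $\balpha_{i_0}^{(T_{i_0}^t)^k\widetilde\bmu_{i_0}}=1$ for some $i_0$ with $\bmu_{i_0}\neq 0$ (and a suitable nonzero modification $\widetilde\bmu_{i_0}$), contradicting $T_{i_0}$-independence.

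The main obstacle is the algebraic decoupling in the third paragraph: extracting a pure single-transformation relation on an infinite AP from a joint relation valid only on finite APs of unbounded length. While the Perron-level decoupling is clean, collisions among subdominant eigenvalues of different $T_i$'s are delicate, and handling them cleanly may require invoking the Subspace Theorem in the style of Corvaja and Zannier \cite{CZ05} rather than an elementary Vandermonde; the passage from finite to infinite APs also relies essentially on Skolem-Mahler-Lech.
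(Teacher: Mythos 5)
Your proposal is a genuinely different route from the paper's (which works with the $\Q$-span of the vectors $T_{\k_l}(\x)$, $\x$ the vector of logarithms, stabilizes that span over piecewise syndetic subsets, finds one increment $\be$ under which the span is invariant, decomposes it into blocks $\pi_i^{-1}(U_i)$, and then runs a subspace-chain argument along an arithmetic progression to produce an infinite AP violating $T_i$-independence). However, your version has two gaps that are not merely technical. First, the decoupling step that you yourself flag is exactly the crux, and it is not optional: when a subdominant eigenvalue $\lambda_i$ of $T_i$ and $\lambda_j$ of $T_j$ satisfy $\lambda_i^{q_i}=\lambda_j^{q_j}$, the coefficient you extract from the vanishing exponential polynomial in $j$ is a \emph{sum} of contributions from the two blocks, and you cannot conclude that each $F_i$ vanishes. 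The hypothesis $\log\rho(T_i)/\log\rho(T_j)\notin\Q$ does rule such collisions out, but only for increments $\q=\be(l,e)$ with $e$ large: a relation $\lambda_i^{e_i}=\lambda_j^{e_j}$ forces $e_j/e_i$ to equal a fixed number, while $e_j/e_i\to\log\rho(T_i)/\log\rho(T_j)\notin\Q$ as $e\to\infty$, so only finitely many increments are bad. You never arrange for your van der Waerden increment $\q$ to be large, and without that the Perron-level separation you do establish is insufficient. (The Subspace Theorem is not the right tool here either; it enters only in the analytic part of Theorem \ref{thm:lemmedezero}, not in this purely multiplicative lemma.)

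Second, taking the real part of the logarithm loses the conclusion. Even if the decoupling succeeded, vanishing of $F_{i_0}(k)=\langle\bmu_{i_0},T_{i_0}^k\log|\balpha_{i_0}|\rangle$ on an infinite AP only yields $|(T_{i_0}^k\balpha_{i_0})^{\bmu_{i_0}}|=1$, which does not contradict $T_{i_0}$-independence as defined (that requires the exact equality $(T_{i_0}^k\balpha_{i_0})^{\bmu_{i_0}}=1$). Your phrase \og treating the imaginary part modulo $2\pi\Z$ by an analogous argument\fg{} hides a real difficulty: the congruence $\langle\bmu,T_{\k_l}\x\rangle\in 2\pi i\Z$ involves unbounded integer parts, which destroys the fixed-order linear recurrence structure on which your Vandermonde and Skolem--Mahler--Lech steps rely. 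The paper avoids both problems at once by doing linear algebra over $\Q$ on the vectors $T_{\k_l}(\x)$ themselves rather than on scalar recurrences: the stabilized span $U(\mL_1)$ is shown to be $T_\be$-invariant for a single good (large) $\be$, the collision-free block decomposition splits it as $\bigoplus\pi_i^{-1}(U_i)$, and a proper block supplies an integer vector $\bnu_0$ and an infinite AP on which $(T_i^{e_i(a+kb)}\balpha_i)^{\bnu_0}=1$, with no need for Skolem--Mahler--Lech. I would encourage you to redo the argument at the level of invariant subspaces; your piecewise-syndetic bookkeeping in the second paragraph is essentially sound and would transfer.
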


\begin{proof}
We argue by contradiction, assuming that $\mL_0$ is piecewise syndetic. 

For every pair of non-negative integers $(l,e)$, with $e >0$, we define the $r$-tuple $\be:=\be(l,e)=(e_1,\ldots,e_r)$ by  
\begin{equation}
\label{eq:sturm}
\be =\k_{l+e}-\k_l \,
\end{equation}
and we set 
$$
\E := \{\be(l,e),\ l,\, e \in \N  \} \,.
$$
Since $\k_l=l\Theta + \mathcal O(1)$, we obtain that 
\begin{equation}\label{eq:equivalencekl1}
\be(l,e) = e\Theta + \mathcal O(1) \, ,
\end{equation}
which shows that $\E$ is infinite. However, given any fixed positive integer $e_0$, the set 
$\{\be(l,e_0) : \ l\in \N\}$ is finite. 

We remark that there do not exist two complex numbers 
$\bbeta_1,\bbeta_2\not\in\{0,1\}$ such that there is a pair $(i,j)$, $1\leq i<j\leq r$, 
satisfying 
$$
\bbeta_1^{e_i}=\bbeta_2^{e_j}
$$
for infinitely many $\be=(e_1,\ldots,e_r) \in \E$. 
Indeed, let us assume that there exists an infinite set $\E_1$     
of such $\be=(e_1,\ldots,e_r) \in \E$. 
We first observe that $\gamma:=\log \bbeta_1/\log \bbeta_2=e_j/e_i$, thus $\gamma\in\mathbb Q$.  
On the other hand, one has  
$e_i= k_{i,l+e}-k_{i,l}$ and $e_j=k_{j,l+e}-k_{j,l}$, which gives 
$$
\gamma = \frac{k_{j,l+e}-k_{j,l}}{k_{i,l+e}-k_{i,l}} \,
$$
where we let $k_{n,m}$ denote the $n$-th coordinate of $\k_m$. 
Since $\E_1$ is infinite, there exist infinitely many $\be \in \E_1$  
such that $\be=\be(l,e)$ and where $e$ can be arbitrarily large.  
Letting $e$ tend to infinity, Equality \eqref{eq:equivalencekl1} implies  that 
\begin{eqnarray*}
\gamma = \frac{\log  \rho(T_i)}{\log \rho(T_j)} \in\mathbb Q \, ,
\end{eqnarray*}
which contradicts the multiplicative independence of $\rho(T_i)$ and $\rho(T_j)$. 
Let us recall that, by assumption, none of the eigenvalues of the matrices $T_i$ are equal to zero or a root of unity. 
Thus there exists a positive integer $e_0$ such that for every $e \geq e_0$, every $l \in \N$, 
 every eigenvalue $\lambda_i$ of $T_i$ and every eigenvalue $\lambda_j$ of $T_j$, $i \neq j$, then 
$$
\lambda_i^{e_i} \neq \lambda_j^{e_j} \, 
$$ 
where $\be=\be(l,e)=(e_1,\ldots,e_r)$. 
In particular, for such $\be$, every vector subspace $V$ of $\C^N$ that is invariant under the (right) action of the matrix  
\begin{equation}
\label{eq:matriceTe}
T_\be:=\left(\begin{array}{ccc} T_1^{e_1} && \\ & \ddots & \\ && T_r^{e_r} \end{array} \right)
\end{equation}
can be decomposed as 
$$
V = \bigoplus_{i=1}^r \pi_i^{-1}(V_i)\, ,
$$
where each  $V_i \subset \C^{n_i}$ is a vector space invariant by $T_i^{e_i}$, 
and where we let $\pi_i : \C^N=\C^{n_1+\cdots + n_r} \rightarrow \C^{n_i}$ 
denote the projection on the block corresponding to the matrix $T_i$. 

We are now ready to proceed with the proof of the lemma. 
Let us consider the column vector $\x$ whose transpose is the vector 
$$
(\log \alpha_{1,1},\log\alpha_{1,2},\ldots,\log\alpha_{1,n_1},\log\alpha_{2,1},\ldots,\log \alpha_{r,n_r})\ .
$$ 
We also set $\x_i:=\pi_i(\x)$.  
By assumption, we have that 
$$
\langle \bmu \, ,\,T_{\k_l}(\x)\rangle = 0 \, 
$$
for all $l \in \mL_0$. 
Let us denote by $U$ the orthogonal complement to the vector $\bmu$ in $\C^N$. 
This is a proper subspace of $\C^N$ defined over $\mathbb Q$, which contains all  vectors 
 $T_{\k_l} (\x)$, $l \in \mL_0$. 
 Given $\mL' \subset \mL_0$, we let $U(\mL')$ denote the smallest vector subspace of $\C^N$ over $\mathbb Q$  
 and containing all 
$T_{\k_l} (\x)$, $l \in \mL'$. 
It follows that $U(\mL_0) \subset U$. Furthermore, if $\mL'' \subset \mL'$, then $U(\mL'') \subset U(\mL')$. 
The subspace $U(\mL_0)$ being finite dimensional, there exists a subset $\mL_1 \subset \mL_0$ that is piecewise syndetic, 
and such that for all piecewise syndetic set $\mL' \subset \mL_1$,  one has 
$$
U(\mL')=U(\mL_1) \, .
$$
Let $B$ be a bound for $\mL_1$ and set 
$$
\E_0 := \{ \be(l,e) : e\in [e_0,e_0+B],l\in\mL_1,l+e \in \mL_1\}\, , 
$$
where $e_0$ is defined as in the first part of the proof. 
This is a finite set. 
Let 
$$
\mL_2 := \{l\in\mL_1 : \exists e \in [e_0,e_0+b] \mbox{ such that } l+e\in\mL_1\}\,.
$$
By Lemma \ref{lem:syndetique}, the set $\mL_2$ is piecewise syndetic. 
Now for $\be \in \E_0$, we set 
$$
\mL_\be :=\{l \in \mL_2 : T_{\be}(T_{\k_{l}}(\x))=T_{\k_{l+e}}(\x)\in U(\mL_1) \mbox{ with } \be=\be(l,e) \} \, .
$$  
If $l\in\mL_2$, then there exists $e\in [e_0,e_0+b]$ such that $l\in\mL_{\be}$ with $\be=\be(l,e)$. 
Hence, $\mL_2 \subset \cup_{\be \in \E_0} \mL_\be$. Since $\mL_2$ is piecewise syndetic, Lemma \ref{lem:syndetique} 
ensures the existence of $\be \in \E_0$  such that $\mL_{\be}$ is piecewise. Furthermore, $\mL_{\be}\subset \mL_1$.  
Thus we obtain that 
$$
U(\mL_{\be})= U(\mL_1) \, .
$$
Hence, the vector space $U(\mL_1)$ is closed under $T_{\be}$, for if $T_{\k_{l}}(\x)\in U(\mL_{\be})=U(\mL_1)$, then 
$T_{\be}(T_{\k_{l}}(\x))\in U(\mL_1)$.  
The first part of the proof shows that there is a decomposition of the form 
$$
U(\mL_1) = \bigoplus_{i=1}^r \pi_i^{-1}(U_i)\, , 
$$
where, for every $i$, $U_i = \pi_i(U(\mL_1)) \subset \C^{n_i}$ is a vector space closed under $T_i^{e_i}$, 
and defined over $\mathbb Q$, where $\be=(e_1,\ldots,e_r)$.  
Since $U(\mL_1)$ is a proper subspace of $\C^N$, there exists $i$, $1 \leq i \leq r$, such that 
$U_{i}$ is a proper subspace of $\C^{n_{i}}$. This space being defined over $\mathbb Q$, it has  a non-zero vector 
$\bnu_0 \in \Z^{n_{i}}$ in its orthogonal complement.  We thus have 
$$
\langle \bnu_0\, ,\, T_{i}^{e_{i}k_{i,l}}(\x_i)\rangle = 0\,,
$$
for all $l \in \mL_1$. 
The set $\mL_1$ being piecewise syndetic, we infer from the definition of the $\k_l$, that the sequence $(k_{i,l})_{l\in \mL_1}$ 
also forms a piecewise syndetic subset of $\N$. 
By property (iv) of Lemma \ref{lem:syndetique}, it contains arbitrarily long arithmetic progressions. 
Let us consider an arithmetic progression of length $n_i$ in $(k_{i,l})_{l\in \mL_1}$, say 
$$
a,a+b,a+2b\cdots,a+(n_i-1)b\, ,
$$
where $a,b \in \N$. 
We consider the sequence of  vector space 
$$V_0\subset \cdots \subset V_{n_i-1}\subset \bnu_0^\perp$$ 
defined by 
$$
V_j = {\rm Vect}_\mathbb{Q}\left\{T_i^{e_ia}(\x_i),\ldots,T_i^{e_i(a+jb)}(\x_i) \right\} \, .
$$
Since $\dim V_{n_i-1}<n_i$, there exists $j_0$ such that 
$V_{j_0}=V_{j_0+1}$. The vector space $V_{j_0}$ is then closed under $T^{e_ib}$ 
and we get that 
$$
\langle \bnu_0 \, ,\, T^{e_i(a+kb)}(\x_i)\rangle =0 \, ,
$$
or equivalently that 
$$
\left(T_i^{e_i(a+kb)}\balpha_i\right)^{\bnu_0} = 1 \,,  
$$
for all $k \in \N$. It follows that $\balpha_i$ is not $T_i$-independent, which provides a contradiction. 
This ends the proof. 
\end{proof}

 We are now ready to prove Theorem \ref{thm:lemmedezero}. 

\begin{proof}[Proof of Theorem \ref{thm:lemmedezero}] 
We keep the notation of the proof of Lemma \ref{lem:admislocglob}.  
We argue by induction on the dimension $t$ of the $\Q$-vector space $L$. 

Let us first assume that $t=1$. Then, dividing if necessary $g$ by some constant, 
there is no loss of generality to assume that $g \in \Q\{\z\}$. 
Let $u\in \Q$. We first show that for every non-zero integer $N$-tuple $\bmu$, the set 
$$
\mL_0:=\left\{ l \in \N : \left(T_{\k_l}\balpha\right)^\bmu = u \right\}
$$
is not piecewise syndetic. 
Let us assume by contradiction that  $\mL_0$ is syndetic and let $B$ be a bound for $\mL_0$. 
Set 
$$
\mathcal E := \{\be(l,e) : l \in \mL_0, l+e \in \mL_0, e\leq B\} \,.
$$
This is a finite set. 
For every $\be \in \mathcal E$, set 
$$
\mL_\be:=\left\{ l \in \N  : \left(T_{\k_l}\balpha\right)^{\bmu-\bmu T_\be} = 1 \right\}\, 
$$
and 
$$
\mL_1 :=\{ l\in \mL_0 : \exists e\leq B \mbox{ such that } l+e \in \mL_0\} \, .
$$ 
Lemma \ref{lem:syndetique} implies that $\mL_1$ is piecewise syndetic. 
For $l\in \mL_1$, there exists $\be=e(l,e) \in \mathcal E$ such that $e \leq B$ and $l+e \in \mL_0$. 
Then we obtain that  
\begin{eqnarray*}
\left(T_{\k_l}\balpha\right)^{\bmu-\bmu T_\be}&=&\frac{\left(T_{\k_l}\balpha\right)^{\bmu}}{\left(T_\be T_{\k_l}\balpha\right)^{\bmu}} \\
&=& u/u \\
&=& 1\, .
\end{eqnarray*}
We thus have $\mL_1 \subset \cup_{\be \in \mathcal E} \mL_\be$ and Lemma \ref{lem:syndetique} ensures the existence of  
$\be \in \mathcal E$ such that $\mL_\be$ is piecewise syndetic. 
By Lemma \ref{lem:admislocglob}, it thus follows that $\bmu - T_\be\bmu = 0$ for such a vector $\be$, which 
contradict  the fact that none of the $T_i$ has a root of unity as eigenvalue. Thus we 
conclude that $\mL_0$ is not piecewise syndetic. 

We set 
$$
\mL'_0:=\left\{ l \in \N : g(T_{\k_l}\balpha)=0 \right\}\, .
$$
Conditions (a) and (b) allow us to apply Theorem 3 of \cite{CZ05} to the sequence of points $(T_{\k_l}\balpha)_{l \in \N}$. 
In order to apply their results, we need to prove that the following three conditions are satisfied.  
\begin{itemize}
\item[{\rm (i)}] There exists a finite set of places $\S$ such that the algebraic points $T_{\k_l}\balpha$ are $\S$-units. 
\item[{\rm (ii)}] The sequence $(T_{\k_l}\balpha)_{l \in \N}$ tends to $0$. 
\item[{\rm (iii)}] One has $\log H(T_{\k_l}\balpha) = \mathcal O(-\log \Vert T_{\k_l}\balpha\Vert )$, where we let $H$ denote 
the absolute Weil height as defined at the end of section \ref{sec: main}.
\end{itemize}
Condition (i) is easy to check. Indeed, any finite number of non-zero algebraic numbers are $\S$-units  for some $\S$. 
The coordinates of the vector $\balpha$ are thus $\S$-units for some $\S$, and it follows directly that  
all $T_{\k_l}\balpha$ are then $\S$-units. 
Since by assumption $\balpha_i \in \mathcal U(T_i)$, the sequence $(T_{\k_l}\balpha)_{l \in \N}$ tends to $0$ and (ii) is satisfied. 
Next we check that (iii) holds. The matrix $T_i$ belonging to the class $\M$, it follows from \cite{LvdP77} that 
$$
\Vert T_i^{k}\Vert  = \mathcal O(\rho(T_i)^{k}) \qquad \text{ and } \qquad  -\log \Vert T_i^{k}\balpha_i\Vert  = \mathcal O(\rho(T_i)^{k})  
$$
for all non-negative integer $k$. The way we choose the vector $\Theta$ and of the vectors $\k_l$ ensures that 
$$
 \log H(T_{\k_l}\balpha) = \mathcal O(\rho^{|\k_l|}) \qquad \text{ and } \qquad -\log \Vert T_{\k_l}\balpha\Vert  \geq c \rho^{|\k_l|}\, , 
$$
where $\rho = e^{1/|\Theta|}$ and where $c$ is a positive real number 
(see the proof of lemma \ref{lem:convergence} for further detail). 
We deduce that $\log H(T_{\k_l}\balpha) = \mathcal O (-\log \Vert T_{\k_l}\balpha\Vert )$. 
Thus we can apply Theorem 3 of \cite{CZ05} to the sequence of algebraic points $(T_{\k_l}\balpha)_{l \in \N}$ 
and the function $g(\z)$. 
We obtain the existence of a finite number of $N$-tuples $\bmu_1,\ldots,\bmu_s$ and of algebraic numbers $u_1,\ldots,u_s$, 
such that 
$$
\mL'_0 \subset \bigcup_{i=1}^s \mL'_i 
$$ 
where 
$$
\mL'_i:=\left\{ l \in \N : \left(T_{\k_l}\balpha\right)^{\bmu_i} = u_i \right\}\, .
$$
As we have already proved that none of the sets $\mL'_i$ are piecewise syndetic, it follows from Lemma \ref{lem:syndetique} 
that $\mL'_0$ is not piecewise syndetic. This proves the theorem when $t=1$.

We assume now that $t \geq 2$. By induction, we also assume that the theorem is true when the dimension of $L$ 
is less than $t$. Let $a_1,\ldots,a_t$ be a basis of $L$ over $\Q$. 
We consider the decomposition
\begin{equation}
\label{eq:decompositionsurQ}
g(\z)=\sum_{i=1}^t a_ig_i(\z)\, ,
\end{equation}
where $g_i(\z) \in \Q\{\z\}$ for $1\leq i \leq t$. 
 Set 
$$
\mL_0:=\left\{ l \in \N : g(T_{\k_l}\balpha)=0 \right\}\, 
$$
and let us assume that $\mL_0$ is piecewise syndetic with bound $B$. 
We set  
$$
\mathcal E := \{\be(l,e) : l \in \mL_0, l+e \in \mL_0, e\leq B\} \,.
$$
For every $\be \in \mathcal E$, we consider the power series 
$$
h_\be(\z) := \sum_{i=1}^{t-1}a_i(g_i(\z)g_t(T_\be \z) - g_i(T_\be \z)g_t(\z)) \, .
$$
We also set 
$$
\mL_\be:=\left\{ l \in \N : h_\be(T_{\k_l}\balpha)=0 \right\}\, ,
$$
and 
$$
\mL_1 := \{l \in \mL_0 : \exists e\leq B \mbox{ such that } l+e \in \mL_0\}\, . 
$$ 
Therefore, for every $l \in \mL_1$, we have 
\begin{eqnarray*}
h_\be(T_{\k_l}\balpha) &=& g(T_{\k_l}\balpha)g_t(T_{k_{l+e}}\balpha) -  g(T_{\k_{l+e}}\balpha)g_t(T_{k_{l}}\balpha) \\
&=&0 \, ,
\end{eqnarray*}
for a $\be \in \mathcal E$. 
This shows that $\mL_1 \subset \cup_{\be\in\E} \mL_\be$. 
Since $\mL_1$ is piecewise syndetic and $\E$ is finite, Lemma \ref{lem:syndetique} implies that $\mL_\be$ 
is piecewise syndetic for some $\be\in\E$.  
By induction, we thus get that $h_\be(\z)=0$.  
Then, we infer from the $\Q$-linear independence of the $a_i$'s 
that  
$$
g_i(\z)g_t(T_\be \z) = g_i(T_\be \z)g_t(\z)
$$
for every $i$, $1\leq i\leq t-1$. 
We can now apply a result due to Ku. Nishioka \cite[Theorem 3.1]{Ni_Liv} that we recall now.   
Let $T$ be a non-singular square matrix with non-negative integer coefficients and such that no root of unity is an 
eigenvalue of $T$. If
 $h(\z) \in\C((\z))$ satisfies the equation 
$$
h(T\z)=c h(\z)+d 
$$
for some $c,d\in\C$, then $h \in \C$. 
The matrix $T_\be$ satisfies the assumption of this theorem, so we can apply it to 
the power series $h_i(\z)=g_i(\z)/g_t(\z)$. We deduce that for every $i$, $1\leq i\leq t-1$, there exists $\gamma_i \in \C$ 
such that $g_i(\z)=\gamma_i g_t(\z)$. We can thus write $g(\z) = a g_t(\z)$ with $a = \sum a_i\gamma_i$, 
which corresponds to the case $t=1$. In that case, we already proved that $\mL_0$  cannot be piecewise syndetic, a contradiction. 
This ends the proof. 
\end{proof}


\section{Mahler's method in families}\label{sec: families}

In this section, we state Theorem \ref{thm: families} which is an axiomatic result concerning 
the algebraic relations of values of several Mahler systems at algebraic points satisfying some ad hoc admissibility 
conditions called (A), (B), and (C), which replace Conditions (a), (b), and (c) in this more general framework.


\subsection{Families of Mahler systems}

Let $r$ be a positive integer. For every $i$, $1\leq i \leq r$, we consider a regular singular Mahler system 
of the form 
\begin{equation}\stepcounter{equation}
\label{eq:mahler2}\tag{\theequation .i}
\left(\begin{array}{c} f_{i,1}(\z_i) \\ \vdots \\ f_{i,m_i}(\z_i) 
\end{array} \right) = A_i(\z_i)\left(\begin{array}{c} 
f_{i,1}(T_i\z_i) \\ \vdots \\ f_{i,m_i}(T_i\z_i)\end{array}\right)\, ,
\end{equation}
where $n_i$ and $m_i$ are positive integers,  
$\z_i:=(z_{i,1},\ldots,z_{i,n_i})$ is a vector of indeterminates, $T_i$ is an $n_i\times n_i$ matrix with non-negative coefficients,  
$A_i(\z_i)$ belongs to  $\in{\rm GL}_{m_i}(\Q(\z_i))$, 
and the functions $f_{i,1}(\z_i),\ldots,f_{i,m_i}(\z_i)$ belong to $\Q\{\z_i\}$. 
Note that we have to replace $A_i(\z_i)$ by $A_i(\z_i)^{-1}$ to obtain a system as in \eqref{eq:mahler}. However, it is 
more natural in our proof to work with systems written in the form \eqref{eq:mahler2}.

In order to lighten the notation, we let $\f_i(\z_i)$ denote the column vector formed by  
the functions $f_{i,1}(\z_i),\ldots,f_{i,m_i}(\z_i)$. We will also set 
\begin{equation}\label{eq:dimensions}
M := \sum_{i=1}^r m_i \qquad \text{ and } \qquad N := \sum_{i=1}^r n_i.
\end{equation}
Iterating $k$ times the system \eqref{eq:mahler2}, one obtains the new system 
\begin{equation}\stepcounter{equation}
\label{eq:mahleritere}\tag{\theequation .i}
\f_i(\z_i) = A_{i,k}(\z_i)\f_{i}(T_i^k\z_i) \,,
\end{equation}
where we let $A_{i,k}$ denote the $k$-th iteration of the matrix $A_i$ by the transformation $T_i$, that is,  
$$
A_{i,k}(\z_i):=A_i(\z_i)A_i(T\z_i) \cdots A_i(T^{k-1}\z_i) \,.
$$
Set $\z:=(\z_1,\ldots,\z_r)$ and by abuse of notation $\f_{i}(\z):=\f_{i}(\z_i)$. 
For every $r$-tuple of positive integers $\k=(k_1,\cdots,k_r)$, one can collect together the systems \eqref{eq:mahleritere} 
in a single one as follows: 
\begin{equation}
\label{eq: blocks}
\left(\begin{array}{c}  \f_{1}(\z) \\ \vdots \\ \vdots \\ 
\f_{r}(\z)\end{array} \right) = \left(\begin{array}{cccc} 
A_{1,k_1}(\z_1) & & & \\ 
& \ddots & & \\
&& \ddots & \\
&&& A_{r,k_r}(\z_r)
\end{array}
\right)
\left(\begin{array}{c}   \f_{1}(T_\k\z) \\ \vdots \\ \vdots \\ 
\f_{r}(T_\k\z) \end{array}\right) \, ,
\end{equation}
where we let $T_\k$ denote the block diagonal matrix ${\rm diag}(T_1^{k_1},\ldots,T_r^{k_r})$. 
Finally, we let denote by $\f(\z)$ the column vector formed by all functions $f_{i,j}(\z_i)$, and by $A_\k(\z)$ 
the block diagonal matrix defined so that \eqref{eq: blocks} can be shortened to
\begin{equation}
\label{eq: blockcompact}
\f(\z) = A_\k(\z)\f(T_\k\z) \,.
\end{equation}
We keep these notations for the rest of the paper. 


\subsection{Multivariate exponential polynomials}

By definition, every matrix $A_i(\z_i)$ is conjugated, in the sense of Definition  \ref{def: regular singular}, 
to a matrix $B_i \in {\rm GL}_{m_i}(\Q)$. Let $\Gamma \subset \C^\star$ denote the multiplicative 
group generated by all eigenvalues of the matrices $B_i$. Iterating $k$ times the system \eqref{eq:mahler2} 
leads to the new system \eqref{eq:mahleritere}, and the corresponding matrix $B_i$ is then transformed 
to $B_i^k$. Iterating each system a suitable number of times if needed, one can assume without loss of 
generality that $\Gamma$ is torsion-free. Let $\R_{\Gamma,r}$ denote 
the $\Z$-module generated by the image of all maps of the form: 
\begin{equation}
\label{eq:polyexponentiel}
\begin{array}{rcl}
 \N^r & \rightarrow & \Z(\Gamma)
\\ 
\k=(k_1,\ldots,k_r) & \mapsto & \prod_{i=1}^r \left(\gamma_i^{k_i}
k_i^{j_i} \right)
\end{array}
\end{equation}
where $\gamma_1,\ldots,\gamma_r \in \Gamma$ and $j_1,\ldots,j_r \in \N$. 
Elements of $\R_{\Gamma,r}$ are called $(\Gamma,r)$-exponential polynomials. 
Let $\A$ be a ring with zero characteristic. One defines the $\A$-algebra of 
$(\Gamma,r)$-exponential polynomials $\R_{\Gamma,r} \otimes_\Z \A$ by extension 
of scalars to $\A$.


\subsection{Statement of the axiomatic theorem}\label{subsec: axiomatic}

Again, there are some rather natural conditions that seem to be inherent to our 
generalization of Mahler's method. 

\begin{defi}\label{def: globaladmissibility}
Let $\balpha_1,\ldots,\balpha_r$ be algebraic points with non-zero coordinates. 
The family of pairs $(T_i,\balpha_i)$ is \emph{admissible} if there exists an infinite set  
$\K \subset \N^r$ and a real number $\rho>1$ such that the following 
conditions hold. 
\begin{enumerate}

\item[\rm{(A)}]   The coefficients of the block diagonal matrix $T_\k$ belong to 
$\mathcal O(\rho^{|\k|})$, for $\k \in \K$. 

\item[\rm{(B)}] $\log \Vert T_\k \balpha\Vert  \leq -c\rho^{|\k|}$, for some positive real number $c$ and all $\k \in \K$.

\item[\rm{(C)}]  If $L$ is a finite-dimensional $\Q$-vector space and 
$\psi \in \R_{\Gamma,r} \otimes_\Z L\{\z\}$ is such that the family 
$\left(\psi(\k,\z)\right)_{\k \in \K}$ is not identically zero, then $\psi(\k,T_\k\balpha)\neq 0$ 
for infinitely many $\k \in \K$.
\end{enumerate}
\end{defi}
  
These admissibility conditions are studied in Section \ref{sec: admgen}. 
One can now state the main result of this section. 

\begin{thm}
\label{thm: families}
Let us consider $r$ regular singular systems \eqref{eq:mahler2}. 
Let $\balpha=(\balpha_1,\cdots,\balpha_r) \in (\C^\star)^N$ an algebraic point such that the 
family $(T_i,\balpha_i)_{1\leq i \leq r}$ is admissible and every 
$\balpha_i$ is regular.  Then if $P \in \Q[\X]$, $\X:=(X_{i,j})_{ i\leq r,\, j \leq m_i}$ is a polynomial, 
homogeneous of degree $d_i$ in the indeterminates 
$(X_{i,j})_{j \leq m_i}$, such that 
$$
P(f_{1,1}(\balpha_1),\ldots,f_{1,m_1}(\balpha_1),f_{2,1}(\balpha_2),\ldots,f_{r,m_r}(\balpha_r)) = 0 \,,
$$
Then there exists a polynomial $Q\in\Q[\z,\X]$, homogeneous of degree $d_i$ in the 
indeterminates $(X_{i,j})_{j \leq m_i}$, such that
$$
Q(\z,f_{1,1}(\z_1),\ldots,f_{r,m_r}(\z_r))=0 \;\;\; \mbox{ and } \;\;\;Q(\balpha,\X)=P(\X) \,.
$$
\end{thm}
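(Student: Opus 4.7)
The plan is to adapt the Nesterenko--Shidlovskii--Beukers--Philippon lifting strategy, already used in the one-variable setting for Theorem \ref{thm: permanence}, to this multi-system framework where the axiomatic admissibility conditions (A), (B), and (C) replace their single-system analogues. Let $V\subset \Q[\X]$ denote the $\Q$-vector space of polynomials of the prescribed multi-homogeneity (degree $d_i$ in each block $(X_{i,j})_{j\leq m_i}$) vanishing at $\f(\balpha)$, and let $\tilde V\subset\Q[\z,\X]$ denote the analogous space of polynomials $Q(\z,\X)$, of the same multi-homogeneity in $\X$, with $Q(\z,\f(\z))=0$ identically. Let $W\subset V$ be the image of $\tilde V$ under specialization $\z\mapsto\balpha$. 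The inclusion $W\subseteq V$ is automatic; the theorem is equivalent to the equality $W=V$, which I would prove by contradiction, assuming some $P\in V\setminus W$ and deriving an absurdity.

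The regular singular hypothesis enters first to linearize each system. Replacing $\f_i$ by $\g_i:=\Phi_i\f_i$ with $\Phi_i\in\Gl_{m_i}(\widehat\bK)$ the gauge transform of Definition \ref{def: regular singular}, and iterating each system a bounded number of times so that the group $\Gamma$ generated by the eigenvalues of the resulting constant matrices $B_i\in\Gl_{m_i}(\Q)$ is torsion-free, the functional equation takes the form $\g(T_\k\z)=B_\k\g(\z)$ with $B_\k:=\mathrm{diag}(B_1^{k_1},\ldots,B_r^{k_r})$. The decisive consequence is that any polynomial expression evaluated along the orbit and unwound via this linearized equation produces coefficients in the ring $\R_{\Gamma,r}$ of $(\Gamma,r)$-exponential polynomials in $\k$ --- precisely the framework that condition (C) is designed to control. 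Next, a Thue--Siegel dimension count applied componentwise to the multi-homogeneous decomposition yields a non-zero auxiliary polynomial $Q\in\Q[\z,\X]$, of controlled bi-degree and of the prescribed multi-homogeneity in $\X$, such that $Q(\z,\f(\z))\in\Q\{\z\}$ vanishes to high order at $\z=\boldsymbol 0$ while $Q(\balpha,\X)$ represents a fixed non-zero class of $V/W$ containing $P$.

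Combining the high-order vanishing at $\boldsymbol 0$ with the growth bounds provided by (A) on the entries of $T_\k$ and by (B) on $\Vert T_\k\balpha\Vert$, Cauchy estimates applied to the analytic function $Q(\z,\f(\z))$ bound $|Q(T_\k\balpha,\f(T_\k\balpha))|$ from above by $\exp(-c'\rho^{|\k|})$ for every $\k\in\K$, while a Liouville-type height inequality provides a lower bound of strictly smaller exponential weight unless this quantity actually vanishes; hence $Q(T_\k\balpha,\f(T_\k\balpha))=0$ for all $\k\in\K$. Rewriting this equality through the iterated Mahler equation $\f(\z)=A_\k(\z)\f(T_\k\z)$ and the gauge transform expresses it as $\psi(\k,T_\k\balpha)=0$ for a specific $\psi\in\R_{\Gamma,r}\otimes_\Z L\{\z\}$, where $L$ is the finite-dimensional $\Q$-vector space generated by the coefficients of $Q(\balpha,\cdot)$ acted on by the $B_\k$. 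Condition (C) then forces $\psi\equiv 0$, equivalently $Q\in\tilde V$, so $P=Q(\balpha,\cdot)\in W$, the desired contradiction. I expect the main obstacle to lie in this last step: guaranteeing that the family $\psi$ extracted from the auxiliary $Q$ is genuinely non-trivial in $\R_{\Gamma,r}\otimes_\Z L\{\z\}$. Since the gauge transforms $\Phi_i$ live in $\widehat\bK$ they carry ramification in the $z_{i,j}$ that must be threaded through the multi-homogeneous grading, and a spurious vanishing of $\psi$ would precisely correspond to $Q(\balpha,\cdot)\in W$, undermining the Siegel choice. Ensuring this non-degeneracy, which requires a careful selection of a complement to $W$ in $V$ and of the bi-degree of $Q$, is where the central combinatorial work of the proof will lie.
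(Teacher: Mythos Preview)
Your outline has two genuine gaps, and they are not the obstacle you flag at the end. First, the Liouville step fails as written: the number $Q(T_\k\balpha,\f(T_\k\balpha))$ is not algebraic. Using $\f(T_\k\balpha)=A_\k(\balpha)^{-1}\f(\balpha)$ expresses it as a $\Q$-linear combination of the fixed complex numbers $\f(\balpha)^{\bmu_j}$, which are typically transcendental, so no height lower bound is available. The paper avoids this by never applying Liouville to a value of the auxiliary function itself; it linearizes first, writing $P(\X)=\sum_j\tau_j\X^{\bmu_j}$ and working with the linear form $F(\bt,\z)=\sum_j t_j\f(\z)^{\bmu_j}$. The iterated relation $F(\btau_\k,T_\k\balpha)=0$ makes $F_q(\btau_\k,T_\k\balpha)$ small for every truncation order $q$, and the auxiliary function is $E=\sum_{j} P_j F_q^{\,j}$ with $P_j\in\Q[\bt,\z]$; the Liouville lower bound is then applied to the \emph{algebraic} number $P_0(\btau_\k,T_\k\balpha)$, not to anything involving $\f(\balpha)$.

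Second, even granting $\psi(\k,T_\k\balpha)=0$ for all $\k\in\K$ and hence $\psi(\k,\z)\equiv 0$ by (C), this does \emph{not} give $Q(\z,\f(\z))=0$. With your $\psi(\k,\z)=Q(\z,\Phi(\z)B_\k^{-1}\Phi(\balpha)^{-1}\f(\balpha))$, the vanishing says only that the polynomial $R(\z,\Y):=Q(\z,\Phi(\z)\Y)$ in $\Y$ vanishes along the orbit $\{B_\k^{-1}\Phi(\balpha)^{-1}\f(\balpha):\k\in\K\}$, identically in $\z$; there is no reason this orbit should be Zariski-dense enough to force $R(\z,\g(\z))=0$ where $\g=\Phi^{-1}\f$. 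You misdiagnose the issue as ``ensuring $\psi$ is non-trivial''; the real problem is that the implication you claim is false. The paper's route is entirely different: the contradiction via upper and lower bounds is used not to show the auxiliary $Q$ is a functional relation, but to prove a valuation statement in a formal Noetherian ring $\A$ (their Lemma~\ref{lem:nulliteF}), namely that $\nu_{i_0}(F_q(\bchi(\k_0),\z))\geq q$ for all $q$ at some fixed $\k_0$. From this valuation bound a \emph{new} functional relation---generally unrelated to the auxiliary function---is extracted (Lemma~\ref{lem:annulationsoussériesF}), and it is this relation that, after specializing and pulling back through $A_{\k_0}(\z)^{-1}$, produces the sought polynomial $Q$. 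The formal ring $\A$ and its prime-ideal valuations are precisely what replace the flawed ``index'' of \cite{LvdP82}; your proposal does not have any analogue of this mechanism.
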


Adding if necessary the function identically equal to $1$ to our systems shows that Theorem 
\ref{thm: families} remains true when $P$ is not homogeneous. 
Section \ref{sec: mainproof} is devoted to the proof of Theorem \ref{thm: families}. 
We stress that, in the case where $r=1$, one recovers Theorem \ref{thm: permanence} 
by taking $\K = \N$.

 
\section{Proof of Theorem \ref{thm: families}}\label{sec: mainproof}

Our proof of Theorem \ref{thm: families} follows the same strategy and steps as the proof of the main 
result of \cite{LvdP82}. However, the proof of Lemma 5 in \cite{LvdP82} is not complete. Furthermore, it is not clear 
that the definition of the so-called \emph{index} in  \cite{LvdP82} has the required multiplicative properties asked for Lemma 5. 
This deficiency has already been emphasized by Ku. Nishioka \cite{Ni90}. The present proof overcomes this difficulty and also 
provides more detailed argument at  several places. 

From now on, we assume that for every integer $i$, $1\leq i \leq r$, the matrix $A_i(\z_i)$ is conjugated to a constant 
invertible matrix through an analytic gauge transform.  That is, we assume that for every integer $i$, $1\leq i \leq r$, 
there exists a matrix 
$\Phi_i(\z_i) \in {\rm GL}_{m_i}(\Q\{\z_i\})$  
such that  
\begin{equation}\stepcounter{equation}
\label{eq:gaugeanalytic}\tag{\theequation .i}
\Phi_i(T\z_i)A_i(\z_i)\Phi_i^{-1}(\z_i) \in {\rm GL}_m(\Q) \,.
\end{equation}
We first prove Theorem \ref{thm: families} in that case. Then we 
show in Section \ref{sec:ramified} how to extend our proof to the general case.

 All along the proof of Theorem \ref{thm: families}, we let $\K$ denote a subset of $\mathbb N^r$ satisfying all properties required 
 by Definition \ref{def: globaladmissibility}. We also consider a vector of $M$ indeterminates 
 $\X:=(\X_1,\ldots,\X_{r})$, where $\X_i := (X_{i,1},\ldots,X_{i,m_i})$ and $M$ is defined as in \eqref{eq:dimensions}. 
  Let us now assume that $P\in \Q[\X]$ is a polynomial, homogeneous of degree $d_i$ in $\X_i$, for each $i$, such that
$$
P(\f(\balpha))=0 \,.
$$  
Let $s$ denote the number of distinct monomials of degree exactly $d_i$ in $\X_i$, for each $i$, and let 
us denote by $\X^{\bmu_1},\ldots,\X^{\bmu_s}$ these monomials, where $\bmu_1,\ldots,\bmu_s$ are $M$-tuple of non-negative integers. Then the polynomial $P(\X)$ can be uniquely decomposed as  
$$
P(\X) = \sum_{j=1}^s \tau_j \X^{\bmu_j} \,,
$$
where $\tau_j\in \Q$. Set $\btau:=(\tau_1,\ldots,\tau_s)$ and, 
given $s$ indeterminates $t_1,\ldots,t_s$, $\bt:=(t_1,\ldots,t_s)$. 
Then we define the form $F(\bt,\z)$ by  
\begin{equation}
\label{eq:defF}
F(\bt,\z):=\sum_{j=1}^s t_j \f(\z)^{\bmu_j} \, .
\end{equation}
This is a linear form in $\bt$. 
At the point $(\btau,\balpha)$, one has 
\begin{eqnarray}
\label{eq: annulationF}
F(\btau,\balpha)&=&\sum_{j=1}^s  \tau_j\f(\balpha)^{\bmu_j} \nonumber\\
&=&P(\f(\balpha))\\
&=&0 \,. \nonumber
\end{eqnarray}

\subsection{Iterated relations} 

For $1 \leq i \leq r$, we let $B_i$ be an $m_i \times m_i$ matrix with coefficients in some ring $\R$, 
and we set $B$ the $M\times M$ 
block diagonal matrix ${\rm diag}(B_1,\ldots,B_r)$. We notice that $(B(\X))^{\bmu_j}\in \R[\X]$ is a 
homogeneous polynomial of degree $d_i$ in each set of variables $\X_i$. 
We let $R_{j,l}(B)$ denote the elements of $\R$ defined by 
$$
\left(B(\X)\right)^{\bmu_j} = \sum_{l=1}^s R_{j,l}(B)\X^{\bmu_l} \,.
$$
We stress that the $R_{j,l}$ are polynomials of degree $d:=\max\{d_1,\ldots,d_r\}$ in the coefficients of 
the matrix $B$. 
Let $\k \in \N^r$, we infer from \eqref{eq: blockcompact} that 
\begin{eqnarray*}
F(\bt,\z) & = & \sum_{j=1}^s t_j \f(\z)^{\bmu_j}
\\& =& \sum_{j=1}^s t_j \left(A_\k(\z)\f(T_\k\z)\right)^{\bmu_j}
\\ & = & F\left(\left(\sum_{j=1}^s t_jR_{j,l}\left(A_\k(\z)\right) \right)_{l\leq s},T_\k\z\right).
\end{eqnarray*}
Set 
\begin{equation}\label{eq: taujk}
\tau_{l,\k} := \sum_{j=1}^s \tau_jR_{j,l}\left(A_\k(\balpha)\right) \in \Q
\mbox{ and } \btau_\k:=(\tau_{1,\k},\ldots,\tau_{s,\k}) \, .
\end{equation}
It follows from \eqref{eq: annulationF} that 
\begin{equation}
\label{eq: annulationFitere}
F(\btau_\k,T_\k\balpha)=0 \,,
\end{equation}
for all $\k \in \N^r$. 


\subsection{Structure of the numbers $\tau_{j,\k}$} 

Here is the part of the proof where the restriction to regular singular systems is really needed.  
We use this property to connect the algebraic numbers $\tau_{j,\k}$ to values at $T_\k\balpha$ 
of multivariate exponential polynomials.  
This connection appears to be fundamental in the proof of Theorem \ref{thm: families}.

\begin{lem}\label{lem: connect}
For every $j$, $1\leq j \leq s$, there exists $\psi_j \in \R_{\Gamma,r} \otimes_\Z \C\{\z\}$ such that 
$$
\tau_{j,\k}  = \psi_{j}(\k,T_\k\balpha)\, ,
$$
for all $\k \in \N^r$.  Furthermore, there exists a finite dimensional $\Q$-vector space $L_0$ such that 
for all $\k \in \N^r$ and all $j$, $1\leq j \leq s$, the coefficients of the formal power series $\psi_{j}(\k,\z)$ belong 
to $L_0$.  
\end{lem}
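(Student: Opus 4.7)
The plan is to leverage the regular-singular factorization afforded by \eqref{eq:gaugeanalytic} to split the iterated matrix $A_{i,k_i}(\balpha_i)$ into an exponential-polynomial piece --- coming from a constant matrix raised to the power $k_i$ --- and an analytic piece coming from $\Phi_i$. Concretely, one writes $A_i(\z_i) = \Phi_i^{-1}(\z_i) B_i \Phi_i(T_i \z_i)$ with $B_i \in {\rm GL}_{m_i}(\Q)$ (which is the form of the gauge condition adapted to the orientation of the system \eqref{eq:mahler2}), and a telescoping product yields
\[
A_{i,k_i}(\z_i) = \Phi_i^{-1}(\z_i)\, B_i^{k_i}\, \Phi_i(T_i^{k_i}\z_i) .
\]
Evaluating at $\balpha_i$ and assembling blocks gives
\[
A_\k(\balpha) = \Psi\, B^\k\, \Phi(T_\k \balpha),
\]
where $\Psi := {\rm diag}(\Phi_i^{-1}(\balpha_i))$ is a fixed matrix, $B^\k := {\rm diag}(B_i^{k_i})$, and $\Phi(\z) := {\rm diag}(\Phi_i(\z_i))$ has entries in $\Q\{\z\}$.

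Substituting this factorization into \eqref{eq: taujk}, I define
\[
\psi_j(\k, \z) := \sum_{l=1}^s \tau_l\, R_{l,j}\bigl( \Psi B^\k \Phi(\z) \bigr),
\]
so that $\psi_j(\k, T_\k \balpha) = \sum_l \tau_l R_{l,j}(A_\k(\balpha)) = \tau_{j,\k}$ by construction. To check that $\psi_j$ lies in $\R_{\Gamma,r} \otimes_\Z \C\{\z\}$, I would use the Jordan decomposition of each $B_i$ over $\Q$ (legitimate since $\Gamma$ is torsion-free): every entry of $B_i^{k_i}$ is a $\Q$-linear combination of monomials $k_i^{j_i}\gamma^{k_i}$, with $\gamma$ an eigenvalue of $B_i$, hence $\gamma\in\Gamma$. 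Expanding the polynomial $R_{l,j}$, of fixed degree $d$, in the entries of $\Psi B^\k \Phi(\z)$ then produces a finite sum of products of: (i) constants in $\Q$ (from the entries of $\Psi$ and from the Jordan scalars), (ii) values at $\k$ of elements of $\R_{\Gamma,r}$, and (iii) polynomials in the entries of $\Phi(\z)$, which are analytic in $\z$. Absorbing the $\Q$-constants into the analytic factor yields the required decomposition in $\R_{\Gamma,r} \otimes_\Z \C\{\z\}$, and taking the $\Q$-linear combination over $l$ weighted by the $\tau_l$ preserves the membership.

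For the uniform coefficient space $L_0$, I would fix a number field $K \subset \Q$ containing all the power-series coefficients of the entries of each $\Phi_i(\z_i)$, the entries of $\Psi$ and of each $B_i$ (so that $\Gamma \subset K^\star$), the coordinates of $\balpha$, and the numbers $\tau_1,\ldots,\tau_s$. The expansion above then shows that every coefficient of the formal power series $\psi_j(\k,\z)$ is a $K$-linear combination of non-negative integers $k_i^{j_i}$ and of units $\gamma^{k_i}\in K^\star$, so it lies in $K$ itself. Taking $L_0 := K$, viewed as a finite-dimensional $\Q$-vector space, completes the proof. The main obstacle I anticipate is justifying the existence of a single number field $K$ accommodating the coefficients of all the $\Phi_i$'s; this rests on the standard fact that the analytic gauge transform of a regular singular Mahler system can be constructed recursively from $A_i$ and $B_i$ in such a way that its Taylor coefficients lie in the number field generated by those of $A_i$ and $B_i$.
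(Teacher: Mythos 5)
Your construction of $\psi_j$ is essentially the paper's: factor $A_\k(\balpha)$ as (constant matrix)$\,\times B_\k\times\,$(matrix-valued function of $\z$, evaluated at $T_\k\balpha$), substitute into the polynomials $R_{l,j}$, and read the exponential-polynomial structure in $\k$ off the Jordan decomposition of the $B_i$. (The paper uses the mirror factorization $A_\k(\balpha)=\Phi(\balpha)B_\k\Phi(T_\k\balpha)^{-1}$, i.e.\ it puts the inverse on the moving factor rather than on the constant one; either way one must first know that $\Phi$ is invertible along the orbit, which is not automatic and is exactly the content of Lemma \ref{lem:regularitePhi}, proved from (B), (C) and the regularity of $\balpha$ --- you should not pass over this silently.)

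The genuine gap is in your last paragraph. The entries of $\Psi={\rm diag}\bigl(\Phi_i^{-1}(\balpha_i)\bigr)$ are values of analytic (in general transcendental) functions at the algebraic point $\balpha$; there is no reason whatsoever for them to be algebraic, let alone to lie in a fixed number field $K$ --- in the basic examples the gauge matrix is built out of the Mahler functions themselves, so $\Phi(\balpha)$ consists precisely of the transcendental numbers under study. Consequently your claim that every coefficient of $\psi_j(\k,\z)$ ``lies in $K$ itself'' is false, and $L_0$ cannot be taken to be a number field. The repair is short and is what the paper does: since each $R_{l,j}$ has degree at most $d$ in the entries of its matrix argument, every Taylor coefficient of $\psi_j(\k,\z)$ is a $\Q$-linear combination (recall $\Q$ denotes $\overline{\mathbb Q}$ here, so all algebraic scalars are free) of the finitely many monomials of degree at most $d$ in the entries of the constant matrix $\Psi$; one takes $L_0$ to be the $\Q$-span of those monomials, a finite-dimensional $\Q$-vector space that genuinely contains transcendental numbers. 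This also dissolves the ``main obstacle'' you flag at the end: no common number field for the Taylor coefficients of the $\Phi_i$ is needed (nor does one exist in general), because the Taylor coefficients of $\Phi^{\pm 1}$, the entries of $B_\k$ and of the Jordan data, and the $\tau_l$ are all algebraic and hence absorbed into the scalars of the $\Q$-vector space $L_0$.
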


According to Equation \eqref{eq:gaugeanalytic}, for every positive integer $i$, $1\leq i \leq r$, 
there exist a matrix $\Phi_i(\z_i)\in{\rm GL}_{m_i}(\Q\{\z_i\})$ and a 
matrix $B_i \in {\rm GL}_{m_i}(\Q)$ such that 
$$
B_i=\Phi_i(\z_i)^{-1}A_i(\z_i)\Phi_i(T_i\z_i)\, .
$$
Iterating this equation, for every positive integer $k$ we get that 
$$
B_i^k=\Phi_i(\z_i)^{-1}A_{i,k}(\z_i)\Phi_i(T_i^k\z_i) \,,
$$
from which we deduce that 
\begin{equation}
\label{eq:matricefondamentaleitere}
A_{i,k}(\z_i)=\Phi_i(\z_i)B_i^k\Phi_i^{-1}(T_i^k\z_i) \,.
\end{equation}
Given a $r$-tuple of positive integers $\k:=(k_1,\ldots,k_r)$, we thus write
\begin{equation}
\label{eq:matricefondamentalebloc}
A_{\k}(\z) = \Phi(\z)B_\k\Phi(T_\k\z)^{-1}\, ,
\end{equation}
where we set 
$$
\Phi(\z) :=
	\left(\begin{array}{ccc}
	\Phi_1(\z_1)&&
	\\& \ddots &
	\\ & &\Phi_r(\z_r)
	\end{array}\right)\, ,
$$
and 
$$
B_\k:=
	\left(\begin{array}{ccc}
	B_1^{k_1}&&
	\\& \ddots &
	\\ & &B_r^{k_r}
	\end{array}\right)\, .
$$
We need the following result. 

\begin{lem}\label{lem:regularitePhi}
For every $r$-tuple $\k \in \N^r$, the matrix $\Phi(\z)$ is well-defined and non-singular at $T_\k\balpha$.
\end{lem}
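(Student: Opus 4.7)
The plan is to reduce the statement to the individual diagonal blocks and then exploit the iterated gauge identity \eqref{eq:matricefondamentaleitere} to propagate good behaviour near the origin back to every iterate $T_i^{k_i}\balpha_i$.

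Since $\Phi(\z) = {\rm diag}(\Phi_1(\z_1),\ldots,\Phi_r(\z_r))$, the matrix $\Phi(\z)$ is well-defined and non-singular at $T_\k\balpha$ if and only if each block $\Phi_i(\z_i)$ is well-defined and non-singular at $T_i^{k_i}\balpha_i$. So I would fix $i$ and an arbitrary $k \in \N$, and focus on $\Phi_i(T_i^k\balpha_i)$. Since $\Phi_i(\z_i) \in {\rm GL}_{m_i}(\Q\{\z_i\})$, both $\Phi_i$ and $\Phi_i^{-1}$ are analytic in a common open neighborhood $U_i$ of the origin, and in particular $\Phi_i$ is well-defined and non-singular throughout $U_i$.

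Next I would argue that $T_i^k\balpha_i \to \mathbf{0}$ as $k\to\infty$, which is a consequence of the admissibility of the family. Indeed, Condition (B) forces $\|T_\k\balpha\| < 1$ for some $\k \in \K$ with $|\k|$ large, whence $\|T_i^{k_i}\balpha_i\| < 1$ for that coordinate; by Lemma \ref{lem:U(T)} this places $T_i^{k_i}\balpha_i$ in $\mathcal{U}(T_i)$, yielding the claimed convergence of the orbit. Consequently, there exists $K_i\in\N$ such that $T_i^k\balpha_i \in U_i$ for every $k \ge K_i$, and for such $k$ the desired conclusion is immediate.

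The remaining case is $k < K_i$. Rewriting \eqref{eq:matricefondamentaleitere} in the form
$$
\Phi_i(\z_i) = A_{i,l}(\z_i)\,\Phi_i(T_i^l\z_i)\,B_i^{-l}
$$
and specialising at $\z_i = T_i^k\balpha_i$ with $l := K_i - k > 0$ yields
$$
\Phi_i(T_i^k\balpha_i) = A_{i,l}(T_i^k\balpha_i)\,\Phi_i(T_i^{K_i}\balpha_i)\,B_i^{-l}.
$$
Each factor on the right-hand side is well-defined and invertible: $\Phi_i(T_i^{K_i}\balpha_i)$ by the previous step, $B_i^{-l}$ because $B_i\in{\rm GL}_{m_i}(\Q)$, and $A_{i,l}(T_i^k\balpha_i) = \prod_{j=0}^{l-1} A_i(T_i^{k+j}\balpha_i)$ because the regularity of $\balpha_i$ guarantees that $A_i$ is well-defined and non-singular at every iterate $T_i^m\balpha_i$. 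Hence $\Phi_i(T_i^k\balpha_i)$ is itself well-defined and non-singular, as required. The proof is essentially mechanical; the only point deserving care is the interplay of admissibility (used to locate a safe zone inside $U_i$), the iterated gauge identity (used to traverse the orbit back to small indices), and the regularity of $\balpha_i$ (used to keep all $A_i$-factors invertible during that traversal).
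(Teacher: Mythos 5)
Your proof is correct and follows the same overall strategy as the paper's: use Condition (B) to place a far iterate inside the domain where $\Phi$ behaves well, then transport well-definedness and invertibility along the orbit via the iterated gauge identity, using the regularity of $\balpha$ to keep the $A$-factors invertible. There are, however, two points where you diverge. First, for non-singularity you rely on the fact that $\Phi_i\in{\rm GL}_{m_i}(\Q\{\z_i\})$ forces $\Phi_i^{-1}$ to be analytic near the origin, so that $\det\Phi_i$ is non-vanishing on a whole neighborhood of $\boldsymbol 0$; the paper instead applies Condition (C) to $\det\Phi$ to produce a single $\k$ with $\det\Phi(T_\k\balpha)\neq 0$ and then propagates. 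Your shortcut is legitimate under the strict reading of ${\rm GL}_{m_i}(\Q\{\z_i\})$, but the paper's route is the one that survives the extension to meromorphic (ramified) gauge transforms in Section \ref{sec:ramified}, where $\det\Phi$ may vanish arbitrarily close to the origin; this is presumably why the authors argue as they do. Second, your claim that the \emph{entire tail} of the orbit $T_i^k\balpha_i$ eventually lies in $U_i$ uses Lemma \ref{lem:U(T)}, which requires $T_i\in\M$ — not a stated hypothesis of the axiomatic framework of Section \ref{sec: families}, though it does follow from Lemma \ref{lem:conditionadmissibilitematrice} (proved later and independently). The paper sidesteps this by using (B) only along $\K$ to reach a single good index and then routing through $\balpha$ itself with both directions of the gauge identity; you could remove the dependency the same way, by propagating forward as well as backward from one index $K_i$ rather than invoking convergence of the whole orbit.
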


\begin{proof}
By assumption, the matrix $\Phi(\z)$ is well-defined in some neighborhood of the origin. It follows from (B) that for 
$\k \in \mathcal K$ such that $\vert \k\vert$ is large enough, the matrix $\Phi(\z)$ is well-defined at $T_\k\balpha$. 
Furthermore, for every $\k \in \N^r$, one has 
\begin{equation}\label{eq:Phi}
\Phi(\z) = A_\k(\z)\Phi(T_\k\z)B_\k^{-1} \, 
\end{equation}
and since the point $\balpha$ is regular, $A_\k(\z)$ is well-defined at $\balpha$ for all $\k$. 
Considering Equality \eqref{eq:Phi} for $\vert\k\vert$ large enough, it follows that $\Phi(\z)$ is well-defined at $\balpha$. 
Inverting \eqref{eq:Phi}, we obtain that 
\begin{equation}\label{eq:Phi2}
\Phi(T_\k\z)=A_\k(\z)^{-1}\Phi(\z)B_\k \, ,
\end{equation}
and since $A_\k(\z)$ is non-singular at $\balpha$ for all $\k$, we deduce that $\Phi(\z)$ is well-defined at $T_\k\balpha$ for all $\k$. 

By assumption, $\det \Phi(\z)\not=0$. It thus follows from (C) that there exists $\k \in \N^r$ such that $\det \Phi(T_\k\balpha)\neq 0$. 
That is, $\Phi(\z)$ is non-singular at $T_\k\balpha$. Using Equality \eqref{eq:Phi}, we get that $\Phi(\z)$ is non-singular at $\balpha$. 
Using \eqref{eq:Phi2}, we deduce that $\Phi(\z)$ is non-singular at $T_\k\balpha$ for all $\k$.
\end{proof}

\begin{proof}[Proof of Lemma \ref{lem: connect}] 
By Lemma \ref{lem:regularitePhi},  we can define the matrix 
$$
B(\k,\z):=\Phi(\balpha)B_\k\Phi(\z)^{-1}\, .
$$  
This matrix is block diagonal, well-defined at $T_\k\balpha$ for all $\k \in \N^r$ and one has 
$$
B(\k,T_\k\balpha)=A_\k(\balpha).
$$
 For $1\leq l\leq s$,  we set 
\begin{equation}
\label{eq:defpsi}
\psi_l(\k,\z):=\sum_{j=1}^s \tau_jR_{j,l}\left(B(\k,\z)\right)
\end{equation}
where the polynomials $R_{j,l}$ are defined in \eqref{eq: taujk}.  
We thus infer from \eqref{eq: taujk} that 
\begin{eqnarray*}
\psi_l(\k,T_\k\balpha)&=&\sum_{j=1}^s \tau_jR_{j,l}\left(B(\k,T_\k\balpha)\right)\\
&=&\sum_{j=1}^s \tau_jR_{j,l}\left(A_\k(\balpha)\right)\\ 
&=&\tau_{l,\k} \, .
\end{eqnarray*}
Using the Jordan decomposition of the matrices $B_i$, one can show that the maps $\k \mapsto \psi_{j}(\k,\z)$, $1 \leq j \leq s$, 
belong to $\R_{\Gamma,r} \otimes_\Z \mathbb L$, where we let 
$$
\mathbb L := \C\left(\z,\Phi(\z)\right)
$$ 
denote the field generated over $\C$ by the indeterminates $\z$ and the coefficients of the matrices $\Phi(\z)$. 

On the other hand, when $\k \in \N^r$ and $j$ are fixed, the power series $\psi_j(\k,\z)$  
has coefficients in the (finite-dimensional) $\Q$-vector space $L_0$ generated by the monomials of degree 
at most $d$ in the coefficients of the matrix $\Phi(\balpha)$. 
\end{proof}

In the sequel we will use the compact notation 
$$
\bpsi(\k,\z):=\left(\psi_1(\k,\z),\ldots,\psi_s(\k,\z)\right) \,.
$$


\subsection{Formalization of the field $\mathbb L$ and valuations}

This part brings a new contribution with respect to the strategy of \cite{LvdP82}. 
In \cite{LvdP82}, the authors define the index of an element 
$E = \sum_\bmu p_\bmu(\bt)\z^\bmu \in \C[\bt][[\z]]$ 
as the smallest integer $h$ such that there does not exist a polynomial $P \in \C[\bt,\z]$ whose 
coefficients agree with those of  $E$ for all powers $\z^{\bmu}$ with $\vert\bmu\vert\leq h$, 
and such that $P(\btau_\k,T_\k\balpha)=0$ for all $\k \in \K$. Lemma 5 in \cite{LvdP82} then claims 
that 
$$
{\rm index}\left(E_1(\bt,\z)E_2(\bt,\z)\right)={\rm index}\ E_1(\bt,\z)+{\rm index}\ E_2(\bt,\z) \, ,
$$
for every $E_1,E_2 \in \C[\bt][[\z]]$.  
In particular, the inequality 
$$
{\rm index}\left(E_1(\bt,\z)E_2(\bt,\z)\right) \leq {\rm index}\ E_1(\bt,\z)+{\rm index}\ E_2(\bt,\z) \, ,
$$
is used at a key point in there proof. However, it is not clear that something even approaching is true. 
It seems that these authors made the following mistake.  
They argue as if given $P_1(\z)$ a polynomial approximation of $E_1(\z)$ at order $r_1$ and $P_2(\z)$ a 
polynomial approximation of $E_2(\z)$ at order $r_2$, the polynomial $P_1(\z)P_2(\z)$ would provide a polynomial 
approximation of $E_1(\z)E_2(\z)$ at order $r_1+r_2$. 
This is clearly not true. Of course, one can  prove that 
$$
{\rm index}\left(E_1(\bt,\z)E_2(\bt,\z)\right) \geq \min\{{\rm index}\ E_1(\bt,\z),{\rm index}\ E_2(\bt,\z)\} \, \
$$
but this is of no help in their proof. 
In order to overcome this problem, we show here how to replace the field $\mathbb L$ by a 
Noetherian ring $\A$ which is just a quotient of a ring of polynomials.   
This allows us to avoid the use of the index  
and to work simply in terms of valuations associated with prime ideals.

\subsubsection{The ring $\A$} 
Let us note that $\mathbb L$ has finite transcendence degree over $\C(\z)$, say $\ell$.  
Among the coefficients of the matrices $\Phi(\z)$, we can pick $\phi_1(\z),\ldots,\phi_{\ell}(\z)$,  
which are algebraically independent over $\C(\z)$. Lemma \ref{lem:regularitePhi} ensures that 
the power series $\phi_i(\z)$ are well-defined at $T^\k\balpha$ for all $\k \in \N^r$. 
The field $\mathbb L$ is a finite algebraic extension of $\C(\z,\phi_1,\ldots,\phi_{\ell})$, say of degree $d_0$.  
Let $\varphi$ be a primitive element of $\mathbb L$, that is, such that $\varphi$ generates $\mathbb L$ over 
$\C(\z,\phi_1,\ldots,\phi_{\ell})$. Multiplying $\varphi$ by an element of $\C[\z,\phi_1,\ldots,\phi_{\ell}]$ if necessary, 
we can assume that $\varphi$ is integer over the ring $\C[\z,\phi_1,\ldots,\phi_{\ell}]$. 
The ring $\C[\z,\phi_1,\ldots,\phi_{\ell}][\varphi]$ is thus a 
free $\C[\z,\phi_1,\ldots,\phi_{\ell}]$-module of rank $d_0$, generated by 
$1,\varphi,\ldots,\varphi^{d_0-1}$.  This is also a subring of the ring $\C\{\z\}$. 
By Lemma \ref{lem:regularitePhi}, the series $\varphi(\z)$ 
is well-defined at $T^\k\balpha$ for all $\k \in \N^r$. 

Let us consider $Y_1,\ldots,Y_\ell, U$, $\ell+1$ indeterminates and let us denote by 
$R \in \C[\z,\phi_1,\ldots,\phi_{\ell}][U]$ the (monic) minimal polynomial of $\varphi$. 
Then we consider the ring 
$$
\A:=\frac{\C[\z,Y_1,\ldots,Y_{\ell},U]}{\mathcal I}\,,
$$
where we let $\mathcal I$ denote the ideal of $\C[\z,Y_1,\ldots,Y_{\ell},U]$ generating by the 
polynomial $R(\z,Y_1,\ldots,Y_\ell,U)$.

\subsubsection{Valuations in $\A$}\label{subsub:valuation}

We first note that the ring $\A$ is Noetherian. We let $(\z)$ denote the ideal generated over $\A$ by the elements  
$z_{1,1},\ldots,z_{r,n_r}$. This ideal is not necessarily prime. However, since $\A$ is Noetherian,  there 
exist distinct prime ideals $\p_1,\ldots,\p_h$ in $\A$ and positive integers $e_1,\ldots,e_h$, such that 
\begin{equation}\label{eq:decompositionideauxz}
\prod_{i=1}^h \p_i^{e_i} \subseteq (\z) \subseteq \bigcap_{i=1}^h \p_i\, .
\end{equation}
For every $i$, $1\leq i \leq h$, we let $\nu_i : \A \mapsto \N$ denote the valuation associated with the prime ideal $\p_i$. 
That is, for every  $a\in\A$, we have  
$$
\nu_i(a):=\sup\{s \in \N :  a \in \p_i^s\}\, .
$$
In particular, $\nu_i(0)=+\infty$. 
Similarly, we set 
$$
\nu_\z(a):=\sup\{s \in \N : a \in (\z)^s\}\, .
$$
But, $\nu_\z$ is not necessarily a valuation for $(\z)$ may not be prime. However, we infer from 
\eqref{eq:decompositionideauxz} that 
\begin{equation}
\label{eq:majorationnuz} \nu_\z(a)  \leq  \nu_i(a)\,  ,
\end{equation}
for all $a \in \A$ and all $i$, $1\leq i\leq h$. 

\subsubsection{Formalization of $\bpsi$} 

Let $\U:= U \bmod \mathcal I \in \A$.  
Since the $\phi_i$'s are algebraically independent over $\C(\z)$, there exists a ring isomorphism $\sigma$ defined from 
$\C[\z,\phi_1,\ldots,\phi_{\ell}][\varphi]$ to $\A$ by 
$$
\begin{array}{rrcl} 
\sigma :& \C[\z,\phi_1,\ldots,\phi_{\ell},\varphi]   &\to& \A
\\
& \phi_i& \mapsto & Y_i
\\ &\varphi  & \mapsto &  \U
\end{array}
$$
The field $\mathbb L$ is the field of fractions of $\C[\z,\phi_1,\ldots,\phi_\ell][\varphi]$, while the field 
$\C(\z,Y_1,\ldots,Y_\ell)[U]/\mathcal I$ 
is the field of fractions of $\A$. Here, we let $\mathcal I$ denote the ideal generating by $R$ in $\C(\z,Y_1,\ldots,Y_\ell)[U]$.  
Thus $\sigma$ extends to a field isomorphism 
$$
\sigma : \mathbb L \rightarrow   \C(\z,Y_1,\ldots,Y_\ell)[U]/\mathcal I \, .
$$
By definition, the matrix $\Phi(\z)$ has coefficients in $\mathbb L$. There thus exists 
$Q(\z,\phi_1,\ldots,\phi_\ell,\varphi) \in \C[\z,\phi_1,\ldots,\phi_\ell,\varphi]$  such that 
 the matrix $Q\Phi(\z)^{-1}$ has coefficients in the ring $\C[\z,\phi_1,\ldots,\phi_\ell,\varphi]$.  
 Applying $\sigma$ to the multivariate exponential polynomials $\psi_j(.,\z)$, 
we get that 
$$
Q(\z,Y_1,\ldots,Y_\ell,\U) \sigma\left(\psi_j(\k,\z)\right) \in \A \, ,
$$
for every $j$ and $\k$. 
Now setting 
$$
\chi_j(\k):=Q(\z,Y_1,\ldots,Y_\ell,\U) \sigma\left(\psi_j(\k,\z)\right)\, ,
$$
the maps $\k \mapsto \chi_j(\k)$ are elements of $\R_{\Gamma,r} \otimes_\Z \A$. 
In the sequel we will use the compact notation 
\begin{equation}\label{eq:chi}
\bchi(\k):=(\chi_1(\k),\ldots,\chi_s(\k)).
\end{equation}


\subsection{Vanishing of polynomials at $(\btau_\k,T_\k\balpha)$}

In this section, we describe the $\Q$-vector space of polynomials in $\Q[\bt,\z]$, which are homogeneous in  
$\bt$ and vanish at $(\btau_\k,T_\k\balpha)$. 

For every $\k\in\mathbb N^r$, we define the morphism: 
$$
{\rm ev}_\k : \left\{\begin{array}{ccc} \Q[\bt,\z] & \rightarrow & \A
\\ P & \mapsto & P(\bchi(\k),\z).\end{array} \right.
$$

\subsubsection{Vanishing Lemma}

Let $V_0$ denote the set of polynomials $P\in \Q[\bt,\z]$ which are homogeneous in $\bt$, 
and such that 
$$
P(\bchi(\k),\z)= 0 \,,
$$ 
for all $\k \in \K$.

\begin{lem}
\label{lem: vanishpol}
Let $P\in \Q[\bt,\z]$ be homogeneous in $\bt$. The following are equivalent. 
\begin{enumerate}
\item[{\rm (i)}] \label{cond1} For all but finitely many $\k \in \K$,  $P(\btau_\k,T_\k\balpha)= 0$.

\item[{\rm (ii)}]  \label{cond2} For all $\k \in \K$, $P(\btau_\k,T_\k\balpha)= 0$. 

\item[{\rm (iii)}]  \label{cond3} $P \in V_0$.
\end{enumerate}
\end{lem}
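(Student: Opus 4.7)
The implication (ii) $\Rightarrow$ (i) is immediate, so the real content is to establish (iii) $\Rightarrow$ (ii) and, most importantly, (i) $\Rightarrow$ (iii). My plan is to use the ring isomorphism $\sigma$ together with a non-zero ``denominator'' $Q$ to translate relations in $\A$ into analytic relations in $\C\{\z\}$, and then to apply the admissibility condition (C) to pass from pointwise vanishing at the points $T_\k\balpha$ back to an identity in $\R_{\Gamma,r} \otimes_\Z L\{\z\}$.

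For (iii) $\Rightarrow$ (ii), let $e$ denote the degree of homogeneity of $P$ in $\bt$. By construction $\bchi(\k) = \sigma\bigl(Q(\z,\phi_1,\ldots,\phi_\ell,\varphi)\,\bpsi(\k,\z)\bigr)$, hence applying $\sigma^{-1}$ to the relation $P(\bchi(\k),\z) = 0$ in $\A$ yields
$$
Q(\z,\phi_1(\z),\ldots,\phi_\ell(\z),\varphi(\z))^{e}\, P(\bpsi(\k,\z),\z) = 0
$$
in $\C[\z,\phi_1,\ldots,\phi_\ell,\varphi]$, and therefore as an analytic identity in some neighborhood of $\boldsymbol 0$. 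Because $Q$ is a non-zero element of this ring (it was chosen merely to clear the denominators of $\Phi^{-1}$), one concludes $P(\bpsi(\k,\z),\z) = 0$ analytically. Specializing $\z \mapsto T_\k\balpha$ (which is legitimate by Lemma \ref{lem:regularitePhi} and condition (B) for $\vert\k\vert$ large, then extended to all $\k$ by the functional equation) and using $\bpsi(\k,T_\k\balpha) = \btau_\k$ gives $P(\btau_\k,T_\k\balpha) = 0$ for every $\k \in \K$.

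For the main direction (i) $\Rightarrow$ (iii), I would introduce the composed object
$$
\varphi(\k,\z) := P(\bpsi(\k,\z),\z).
$$
Since $P$ has finite total degree, and each $\psi_j(\k,\z)$ lies in $\R_{\Gamma,r}\otimes_\Z L_0\{\z\}$ by Lemma \ref{lem: connect}, the expression $\varphi(\k,\z)$ belongs to $\R_{\Gamma,r}\otimes_\Z L\{\z\}$, where $L$ is the finite-dimensional $\Q$-vector space generated by the products of at most $e$ elements of $L_0$. Hypothesis (i) together with the identity $\bpsi(\k,T_\k\balpha) = \btau_\k$ gives $\varphi(\k,T_\k\balpha) = P(\btau_\k,T_\k\balpha) = 0$ for all but finitely many $\k \in \K$. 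The contrapositive of admissibility condition (C) then forces $\varphi \equiv 0$ in $\R_{\Gamma,r}\otimes_\Z L\{\z\}$, i.e.\ $P(\bpsi(\k,\z),\z) = 0$ as an analytic function of $\z$ for every $\k \in \N^r$. Multiplying by $Q^e$ and then applying the ring isomorphism $\sigma$ term-by-term yields $P(\bchi(\k),\z) = 0$ in $\A$ for every $\k \in \N^r$, which in particular gives (iii).

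The step I expect to be the main obstacle is the bookkeeping in (i) $\Rightarrow$ (iii): one must verify carefully that $P(\bpsi(\k,\z),\z)$ genuinely belongs to $\R_{\Gamma,r}\otimes_\Z L\{\z\}$ with $L$ finite-dimensional, so that condition (C) may be invoked. This relies on the multiplicative closure of the class of $(\Gamma,r)$-exponential polynomials (which uses that $\Gamma$ has been arranged to be torsion-free so the exponents behave well) and on the finite-dimensionality statement in Lemma \ref{lem: connect}. Everything else is essentially a translation between $\mathbb L$, its formalization $\A$, and analytic functions in a neighborhood of the origin via $\sigma$ and the denominator $Q$; the substantive transcendence-theoretic input occurs only once, at the invocation of (C).
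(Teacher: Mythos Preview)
Your proof is correct and follows essentially the same route as the paper: use $\sigma^{-1}$ and homogeneity to pass between $P(\bchi(\k),\z)$ and $Q^e P(\bpsi(\k,\z),\z)$, then invoke condition (C) on the exponential polynomial $\k\mapsto P(\bpsi(\k,\z),\z)\in\R_{\Gamma,r}\otimes_\Z L\{\z\}$. One small overstatement: the contrapositive of (C) only yields $P(\bpsi(\k,\z),\z)=0$ for all $\k\in\K$, not for all $\k\in\N^r$ (a non-zero element of the tensor product could vanish on $\K$), but since $V_0$ is defined by vanishing on $\K$ this is exactly what you need and your conclusion stands.
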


\begin{proof}
${\rm (ii)} \implies {\rm (i)}$.  Trivial. 

\noindent${\rm (iii)} \implies {\rm (ii)}$. Let us assume  that $P\in V_0$ with degree $\delta$ in $\bt$,  
so that $P(\bchi(\k),\z)=0$ for all $k \in \K$. 
As $\sigma$ is a ring isomorphism and $P$ is a polynomial, we obtain that for all $\k \in \K$,
\begin{eqnarray}
\label{ZeroPullBack}
P(\bpsi(\k,\z),\z)&=&P(\sigma^{-1}(Q^{-1}\bchi(\k)),\z) \nonumber\\
&=&\sigma^{-1}(Q^{-\delta}P(\bchi(\k),\z))\\
&=&0\, ,\nonumber
\end{eqnarray}
where we let $\delta$ denote the degree of $P$ in $\bt$. 
But for all $\k \in \N^r$, $\psi(\k,T_\k\balpha)=\btau_\k$, so that evaluating \eqref{ZeroPullBack} at 
$\z=T_\k\balpha$, 
we get that 
$$
P(\btau_\k,T_\k\balpha)=0,\qquad \text{ for all } \k \in \K \, ,
$$
as wanted.

\noindent${\rm (i)} \implies {\rm (iii)}$. 
Let $P\in \Q[\bt,\z]$ be homogeneous in $\bt$ with degree $\delta$ in $\bt$. 
Now, let us assume that, for all but finitely many $\k \in \K$, we have 
$$
P(\btau_\k,T_\k\balpha)= 0 \,.
$$ 
The map  
$$
\k \mapsto P(\bpsi(\k,\z),\z)
$$
belongs to $\R_{\Gamma,r} \otimes_\Z \C\{\z\}$. Furthermore, there exists a finite dimensional 
$\Q$-vector space $L$ such that $P(\bpsi(\k,\z),\z)\in\R_{\Gamma,r} \otimes_\Z L\{\z\}$ 
since we already observed that $\bpsi(\k,\z)\in\R_{\Gamma,r} \otimes_\Z L_0\{\z\}$, where 
$L_0$ is finite dimensional over $\Q$. 
Taking $\z=T_\k\balpha$, we obtain $\bpsi(\k,T_\k\balpha)=\btau_\k$, and thus 
$$
P(\bpsi(\k,T_\k\balpha),T_\k\balpha) = 0
$$ 
for all but finitely many $\k \in \K$. Then, Condition (C) of Definition \ref{def: globaladmissibility} gives 
$$
P(\bpsi(\k,\z),\z) = 0
$$
for all $\k \in \K$. Applying $\sigma$,  we get that 
\begin{eqnarray*}
P(\bchi(\k),\z)&=&Q^\delta P(Q^{-1}\bchi(\k),\z)\\
&=&Q^\delta \sigma\left(P(\bpsi(\k,\z),\z)\right) \\
&=& 0\,.
\end{eqnarray*}
Thus $P\in V_0$, which ends the proof. 
\end{proof}


\subsubsection{Estimation of the dimension of some vector spaces} 

Our Lemmas \ref{dimensionespace} and \ref{lem:majorationespacepolynome} mainly correspond  to Theorem 3 
and Lemma 4 in \cite{LvdP82}. However, the vector space $V_0$ considered here being not the same as the one defined 
in \cite{LvdP82}, we supply the reader with proofs of these two results.    

Given two positive integers $\delta_1$ and $\delta_2$, we let $V(\delta_1,\delta_2)$ denote 
the set of polynomials $P\in \Q[\bt,\z]$ which are homogeneous of degree 
$\delta_1$ in the indeterminates $\bt$, and whose total degree in $\z$ is at most $\delta_2$. It is a $\Q$-vector space. 
We then set 
$$
V_0(\delta_1,\delta_2):= V_0 \cap V(\delta_1,\delta_2)\,.
$$
We also consider the quotient space 
 $$
 \overline{V}(\delta_1,\delta_2):=V(\delta_1,\delta_2)/V_0(\delta_1,\delta_2)\, .
 $$
 We stress that the value of a polynomial at the point $(\btau_\k,T_\k\balpha)$, 
$\k \in \K$, only depends on its equivalent class in $\overline{V}$. 
This is a direct consequence of Lemma \ref{lem: vanishpol}. 
\defConstant{dim} \setcounter{constant}{0}

\begin{lem}\label{dimensionespace}
The dimension $v(\delta_1,\delta_2)$ of the $\Q$-vector space $\overline{V}(\delta_1,\delta_2)$ 
satisfies 
$$
v(\delta_1,\delta_2) \sim \Cdim(\delta_1)\delta_2^N\, ,
$$
where $\Cdim(\delta_1)$ is a positive real number that does not depend on $\delta_2$.
\end{lem}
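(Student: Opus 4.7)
The approach is to view $V(\delta_1,\cdot):=\bigcup_{\delta_2\geq 0}V(\delta_1,\delta_2)$ as a filtered $\Q[\z]$-module and study the Hilbert polynomial of the quotient $\overline V(\delta_1,\cdot)=V(\delta_1,\cdot)/V_0(\delta_1,\cdot)$. As the free $\Q[\z]$-module generated by the monomials $\bt^{\bmu}$ with $|\bmu|=\delta_1$, $V(\delta_1,\cdot)$ has rank $r:=\binom{\delta_1+s-1}{s-1}$, and the filtration by total degree in $\z$ gives $\dim V(\delta_1,\delta_2)=r\binom{\delta_2+N}{N}\sim r\delta_2^N/N!$. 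Moreover, $V_0\cap V(\delta_1,\cdot)$ is a $\Q[\z]$-submodule: if $P\in V_0$ then $(fP)(\bchi(\k),\z)=f(\z)P(\bchi(\k),\z)=0$ for every $f\in\Q[\z]$ and every $\k\in\K$. By the Hilbert--Serre theorem applied to the finitely generated filtered $\Q[\z]$-module $\overline V(\delta_1,\cdot)$, $v(\delta_1,\delta_2)$ eventually agrees with a polynomial in $\delta_2$ whose degree is the Krull dimension of $\overline V(\delta_1,\cdot)$; the bound $v(\delta_1,\delta_2)\leq\dim V(\delta_1,\delta_2)$ already shows this degree is at most $N$.

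It therefore suffices to prove that the Krull dimension is exactly $N$, equivalently (as $\Q[\z]$ is a domain of dimension $N$) that the annihilator of $\overline V(\delta_1,\cdot)$ in $\Q[\z]$ vanishes. So let $f\in\Q[\z]$ annihilate $\overline V$; then $f\cdot\bt^{\bmu}\in V_0$ for every $|\bmu|=\delta_1$, which via \eqref{eq:chi} and the associated formalization translates to $f(\z)\bchi(\k)^{\bmu}=0$ in $\A$ for all $\k\in\K$ and all such $\bmu$. I plan to combine two ingredients: (i) $\A$ is an integral domain in which $\z$ is algebraically independent, and (ii) for every $\k\in\K$ there exists $\bmu$ with $\bchi(\k)^{\bmu}\neq 0$ in $\A$. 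Together these force $f(\z)=0$ in $\A$, and hence $f=0$ as a polynomial.

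For (i), the defining ideal of $\A$ is generated by the monic minimal polynomial $R\in\C[\z,Y_1,\ldots,Y_\ell][U]$ of the primitive element $\varphi$ of $\mathbb L$ over $\C(\z,\phi_1,\ldots,\phi_\ell)$. Irreducibility of $R$ over the fraction field combined with monicity in $U$ yields, by Gauss's lemma, irreducibility in $\C[\z,Y_1,\ldots,Y_\ell,U]$, so the ideal is prime and $\A$ is a domain. The algebraic independence of $\z$ in $\A$ then follows from the positive $U$-degree of $R$, which prevents any nonzero element of $\C[\z]$ from lying in $(R)$.

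For (ii), I evaluate at $\z=\balpha$. The matrix $B(\k,\balpha)=\Phi(\balpha)B_\k\Phi(\balpha)^{-1}=:C_\k$ is invertible, since $B_\k$ is invertible and $\Phi(\balpha)$ is non-singular by Lemma~\ref{lem:regularitePhi}. Using the defining identity for the $R_{j,l}$, one computes
$$\sum_{l=1}^{s}\psi_l(\k,\balpha)\,\X^{\bmu_l}=\sum_{j,l}\tau_j R_{j,l}(C_\k)\X^{\bmu_l}=\sum_j\tau_j(C_\k\X)^{\bmu_j}=P(C_\k\X).$$
Since $C_\k$ is invertible and $P\neq 0$, the polynomial $P(C_\k\X)$ is nonzero, hence some $\psi_l(\k,\balpha)\neq 0$; consequently $\psi_l(\k,\z)\not\equiv 0$ in $\mathbb L$, and $\chi_l(\k)=Q\cdot\sigma(\psi_l(\k,\z))\neq 0$ in the domain $\A$. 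Taking $\bmu=\delta_1\be_l$ then yields $\bchi(\k)^{\bmu}=\chi_l(\k)^{\delta_1}\neq 0$, as required. The main obstacle lies precisely in this last step: one must chain the invertibility of $C_\k$, the domain structure of $\A$, and the ring isomorphism $\sigma$ in order to carry the nontriviality of $P$ all the way down into nonvanishing statements in the formalized ring, thereby securing the lower bound on the Krull dimension and the positivity of $\Cdim(\delta_1)$.
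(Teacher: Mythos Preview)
Your argument is correct and takes a genuinely different route from the paper. The paper expands $P(\bchi(\k),\z)$ explicitly in the $\A$-basis $\z^\lambd \Y^\kapp \U^j$, identifies membership in $V_0$ with a system of linear equations \eqref{definitionV0} on the coefficients $p_{\bnu,\bmu}$, and then counts the number of independent equations directly (showing it grows like a constant times $\delta_2^N$). Your approach replaces this count by a structural observation: $\overline V(\delta_1,\cdot)$ is a finitely generated $\Q[\z]$-module with trivial annihilator, and Hilbert--Serre forces the cumulative Hilbert function to have degree $N$. The paper's proof is more self-contained and yields some explicit control on the constant $\Cdim(\delta_1)$; your argument is shorter and more conceptual, and in fact gives the slightly sharper conclusion that $v(\delta_1,\delta_2)$ is eventually a \emph{polynomial} in $\delta_2$.

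Two remarks to tighten the write-up. First, the quotient $\overline V(\delta_1,\cdot)$ is not a priori graded, only filtered, so the version of Hilbert--Serre you need is the one for a finitely generated $\Q[\z]$-module equipped with the filtration generated by placing the $\overline{\bt^{\bmu}}$ in degree~$0$; you should either homogenize (introduce $z_0$) or pass to the initial module $\mathrm{in}(V_0(\delta_1,\cdot))$, and then invoke the standard fact that passing to initial modules preserves Krull dimension. This is routine but worth a sentence. Second, your step (ii) can be shortened: once you know $\A$ is a domain containing $\Q[\z]$ and \emph{some} $\chi_{l_0}(\k_0)\neq 0$ for \emph{one} $\k_0\in\K$, the image of $\mathrm{ev}_{\k_0}:\overline V(\delta_1,\cdot)\to\A$ is a nonzero finitely generated torsion-free $\Q[\z]$-module, hence has dimension $N$; this already gives $\dim\overline V(\delta_1,\cdot)\geq N$ without explicitly computing the annihilator.
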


\begin{proof}
Let 
$$
P:=\sum_{|\bnu|= \delta_1, |\bmu|\leq \delta_2} p_{\bnu,\bmu}\bt^\bnu\z^\bmu \,
$$
be in $V(\delta_1,\delta_2)$. 
Let $d_0$ be the degree of the field extension generated by $\U$ over $\C(\z,Y_1,\ldots,Y_{\ell})$. 
Then for all $\bnu$ such that $|\bnu| =  \delta_1$, we have 
$$
\bchi(\k)^\bnu= \sum_{|\bomega| \leq \delta'_1,0\leq j< d_0,|\kapp|\leq \delta_1''} 
S_{\bnu,\bomega,j,\kapp}(\k)\z^\bomega \U^jY_1^{\kappa_1}\cdots Y_{\ell}^{\kappa_{\ell}} \in \A \,, 
$$
where $\delta'_1$ and $\delta_1''$ only depend on $\delta_1$, and where the map 
$\k\mapsto S_{\bnu,\bomega,j,\kapp}(\k)$ is an 
element of $\R_{\Gamma,r}\otimes_\Z \C$. 
It thus follows that 
$$ 
P(\bchi(\k),\z) = \sum_{\lambd,j} \left(\sum_{|\bnu|= \delta_1, |\bomega|\leq  \delta'_1, |\kapp|\leq \delta_1''}  
S_{\bnu,\bomega,j,\kapp}(\k)p_{\bnu,\lambd-\bomega}\right) \z^\lambd \U^jY_1^{\kappa_1}\cdots Y_\ell^{\kappa_\ell} \,,
$$ 
where we set $p_{\bnu,\lambd-\bomega}=0$ when $\lambd-\bomega \notin \N^N$. 
A polynomial $P \in V(\delta_1,\delta_2)$ 
belongs to $V_0$ if and only if, for every $(N+\ell+1)$-tuple $(\lambd,j,\kapp)$, we have 
\begin{equation}\label{eq:annulationpolyexpo} 
\sum_{|\bnu|= \delta_1, |\bomega| \leq \delta'_1}  S_{\bnu,\bomega,j,\kapp}(\k)p_{\bnu,\lambd-\bomega} = 0\, ,
\end{equation}
for all $\k \in \K$. We consider a decomposition 
$$
\R_{\Gamma,r}\otimes_\Z \C = W \oplus W^\perp\, ,
$$
where we let $W$ denote the vector space formed by the sequences $S \in \R_{\Gamma,r}\otimes_\Z \C$ 
such that  $S(\k)=0$ for all $\k \in \K$. 
Given $S \in \R_{\Gamma,r}\otimes_\Z \C$, 
we let $S^\perp$ denote the projection parallel to $W$ of $S$, on $W^\perp$.  
For every $(N+\ell+1)$-tuple $(\lambd,j,\kapp)$, Equality \eqref{eq:annulationpolyexpo} is equivalent to 
$$
\sum_{|\bnu|= \delta_1, |\bomega| \leq \delta'_1}  
S^\perp_{\bnu,\bomega,j,\kapp}(.)p_{\bnu,\lambd-\bomega} = 0 \in \R_{\Gamma,r}\otimes_\Z \C\, .
$$
For every tuple $(\bnu,\bomega,j,\kapp)$, we write 
$$
S^\perp_{\bnu,\bomega,j,\kapp}(\k) = 
\sum_{i=1}^q \bet_i^{\k}\sum_{\vert \gamm\vert \leq u} s_{\bomega,j,\bnu,\kapp,i,\gamm} \k^\gamm \,,
$$
where $\bet_i=(\eta_{i,1},\ldots,\eta_{i,r}) \in \Gamma^r$ and $s_{\bomega,j,\bnu,\kapp,i,\gamm} \in \C$. 
Thus, $P \in V_0$ if, and only if, for all $(\lambd,j,\kapp,\gamm,i)$, 
we have 
\defconstant{a}\defconstant{b}
\begin{equation}
\label{definitionV0}
\sum_{|\bnu| = \delta_1, |\bomega|\leq \delta'_1}   s_{\bomega,j,\bnu,\kapp,i,\gamm}p_{\bnu,\lambd-\bomega} = 0.
\end{equation}
Let $\Lambda(\delta_1,\delta_2)$ denote the number of indices $(\lambd,j,\kapp,\gamm,i)$ 
where $\vert\lambd\vert\leq \delta_2$, $\lambd - \bomega \in \N^N$ for all $\bomega$ with $|\bomega|=\delta_1$, 
$j\leq d_0$, $\vert\kapp\vert \leq \delta_1''$, $\vert \gamm\vert\leq u$, 
and $i\leq q$.  Then 
$$
\Lambda(\delta_1,\delta_2)\sim \ca(\delta_1)\delta_2^N \,,
$$ 
as $\delta_2\to \infty$, and where $\ca(\delta_1)$ is a positive real number 
that does not depend on $\delta_2$. 

On the other hand, $V_0$ is defined by a number $L(\delta_1,\delta_2)$ of independent linear equations in the coefficients of 
$P$ given by \eqref{definitionV0}. The family of complex numbers $\{s_{\bomega,j,\bnu,\kapp,i,\gamm}\}$ is independent of $\delta_2$ 
and and we claim that these complex numbers are not all zero. 
Indeed, if we assume that $s_{\bomega,j,\bnu,\kapp,i,\gamm}=0$ for all indices $\bomega,j,\bnu,\kapp,i,\gamm$, 
then the maps $\k \mapsto S^\perp_{\bnu,\bomega,j,\kapp}(\k)$ are all identically zero. 
But $S_{\bnu,\bomega,j,\kapp}(\k)=0$ for all $\k \in \K$ implies that $\bchi(\k)^\bnu=0$ for all $\bnu$ with $\vert \bnu\vert=\delta_1$. 
It follows that $\chi_i(\k)=0$ for all $\k \in \K$ and all $i\in \{1,\ldots,s\}$. 
Applying the isomorphism $\sigma^{-1}$, we get that $\bpsi(\k,\z)=0$ for all $\k \in \K$. Finally, evaluating at $\z = T_\k\balpha$, 
we obtain that $\btau_\k =0$ for all $\k \in \K$, and thus $\btau=(\tau_1,\ldots,\tau_s)=0$. This provides a contradiction.  
Hence, $L(\delta_1,\delta_2)$ is nonzero. 
If $\lambd$ is such that $\lambd - \bomega \in \N^N$ for all $\bomega$ with $|\bomega|=\delta_1$, then 
the corresponding number of independent equations given by \eqref{definitionV0} is a non-zero number $\cb(\delta_1)$ 
that does not depend on $\lambd$.  
Hence, we have
\begin{eqnarray*}
L(\delta_1,\delta_2)&\sim&\cb(\delta_1)\Lambda(\delta_1,\delta_2) \\
&\sim& \ca(\delta_1)\cb(\delta_1)\delta_2^N\, .
\end{eqnarray*}
as $\delta_2\to \infty$. 
Setting $\Cdim(\delta_1):=\ca(\delta_1)\cb(\delta_1)$, we obtain that 
the dimension of $\overline{V}(\delta_1,\delta_2)$ 
satisfies 
$$
v(\delta_1,\delta_2) \sim \Cdim(\delta_1)\delta_2^N\,,
$$
as $\delta_2\to \infty$.
\end{proof}

\begin{lem}\label{lem:majorationespacepolynome}
For all pair of positive real numbers $(\delta_1,\delta_2)$, we have 
$$
v(2\delta_1,\delta_2) \leq (s+1)v(\delta_1,\delta_2) \,.
$$
\end{lem}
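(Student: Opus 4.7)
The plan is to construct a surjective $\Q$-linear map
\begin{equation*}
\Psi : \overline V(\delta_1, \delta_2)^{\oplus (s+1)} \twoheadrightarrow \overline V(2\delta_1, \delta_2),
\end{equation*}
from which the desired inequality $v(2\delta_1,\delta_2) \leq (s+1) v(\delta_1,\delta_2)$ follows at once.

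The first step is to verify an ideal-like property of $V_0$: if $P \in V_0$ and $h \in \Q[\bt,\z]$ is any polynomial homogeneous in $\bt$, then $hP \in V_0$, since $(hP)(\bchi(\k),\z) = h(\bchi(\k),\z) \cdot P(\bchi(\k),\z) = 0$ in $\A$ for every $\k \in \K$. Writing $H_{\delta_1}$ for the finite-dimensional $\Q$-vector space of polynomials in $\bt$ alone, homogeneous of degree $\delta_1$, it follows that multiplication by any $h \in H_{\delta_1}$ sends $V_0(\delta_1,\delta_2)$ into $V_0(2\delta_1,\delta_2)$ and therefore descends to a well-defined linear map $\overline\mu_h : \overline V(\delta_1,\delta_2) \to \overline V(2\delta_1,\delta_2)$.

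Next, I would select $s+1$ specific multipliers $h_0,h_1,\ldots,h_s \in H_{\delta_1}$, the natural candidates being the pure powers $h_j := t_j^{\delta_1}$ for $j = 1,\ldots,s$ together with one further element $h_0$, and argue that their combined images fill out $\overline V(2\delta_1,\delta_2)$. The pure powers already cover every monomial $\bt^\bnu p(\z) \in V(2\delta_1,\delta_2)$ with $p \in \Q[\z]_{\leq \delta_2}$ and some coordinate $\nu_j \geq \delta_1$, via the factorisation $\bt^\bnu p(\z) = t_j^{\delta_1} \cdot \bigl(\bt^{\bnu - \delta_1 e_j} p(\z)\bigr)$. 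When $s \leq 2$, every degree-$2\delta_1$ monomial is already of this form, and one obtains even the sharper bound $v(2\delta_1,\delta_2) \leq s \cdot v(\delta_1,\delta_2)$.

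The main obstacle is the case $s \geq 3$, when ``interior'' monomials $\bt^\bnu$ of $\bt$-degree $2\delta_1$ with every $\nu_j < \delta_1$ appear and must all be absorbed by the single extra multiplier $h_0$. A generic dimension count shows that for $s \geq 4$ and sufficiently large $\delta_1$ the ratio $\dim V(2\delta_1,\delta_2)/\dim V(\delta_1,\delta_2)$ can exceed $s+1$, so the argument is forced to exploit the structure of $V_0$ in an essential way. Concretely, the plan is to produce, for each interior $\bt^\bnu$ and each $p(\z)$ of $\z$-degree $\leq \delta_2$, a polynomial identity of the shape
\begin{equation*}
\bt^\bnu p(\z) - h_0 \cdot Q_{\bnu,p}(\bt,\z) \in V_0(2\delta_1,\delta_2) + \sum_{j=1}^s t_j^{\delta_1} V(\delta_1,\delta_2),
\end{equation*}
with $Q_{\bnu,p} \in V(\delta_1,\delta_2)$, for a single uniform choice of $h_0 \in H_{\delta_1}$. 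Verifying this reduction will crucially require the iteration identity $\btau_\k = \btau \cdot R(A_\k(\balpha))$ and the foundational Mahler relation $F(\btau_\k, T_\k\balpha) = 0$; producing and controlling such an identity explicitly is what I expect to be the principal technical step of the proof.
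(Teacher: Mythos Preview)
Your plan coincides with the paper's: write $P\in V(2\delta_1,\delta_2)$ as $P_0+\sum_{i=1}^s t_i^{\delta_1}P_i$ with $P_i\in V(\delta_1,\delta_2)$ for $i\ge1$, where $P_i$ collects the monomials $\bt^\bnu$ having $\nu_i>\delta_1$ and $P_0$ collects those with every $\nu_j\le\delta_1$, and then pass to the quotient by $V_0$. The paper simply asserts that $P_0\in V(\delta_1,\delta_2)$ as well, and concludes that the classes $Q_j$ together with $t_i^{\delta_1}Q_j$ span $\overline V(2\delta_1,\delta_2)$. But the $P_0$ constructed there visibly has $\bt$-degree $2\delta_1$, not $\delta_1$, and the $Q_j$ themselves do not even lie in $V(2\delta_1,\delta_2)$; so the paper's written argument glosses over exactly the ``interior monomial'' obstruction you have isolated.

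Your proposed resolution, however, does not close the gap either. You hope to absorb every interior monomial via a \emph{single} extra multiplier $h_0\in H_{\delta_1}$ together with corrections from $V_0$, invoking the relation $F(\btau_\k,T_\k\balpha)=0$. But that relation holds only after specialisation at $(\btau_\k,T_\k\balpha)$, not as an identity in $\A$, so it does not directly manufacture elements of $V_0(2\delta_1,\delta_2)$ of the shape you need; and, as your own dimension count shows, for $s\ge4$ and large $\delta_1$ the space of interior monomials already exceeds $\dim V(\delta_1,\delta_2)$, so no purely combinatorial choice of $h_0$ can suffice. In short, you have correctly diagnosed the difficulty, but neither the paper's argument as written nor your sketch supplies the missing idea that controls the $P_0$-piece.
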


\begin{proof}
Let $P \in V(2\delta_1,\delta_2)$.  We claim that $P$ can be decomposed as 
\begin{equation}
\label{reduction}
P(\bt,\z)=P_0(\bt,\z)+\sum_{i=1}^s t_i^{\delta_1}P_i(\bt,\z),
\end{equation}
where $P_i \in V(\delta_1,\delta_2)$, $0 \leq i \leq s$. 
Indeed, let us write 
$$
P(\bt,\z)= \sum_{|\bnu| = 2\delta_1} p_\bnu(\z)\bt^\bnu \,.
$$
For every  $\bnu=(\nu_1,\ldots,\nu_d)$, there exists at most one $i:=i(\bnu)$ such that $\nu_i>\delta_1$. 
Set  
$$
P_i(\bt,\z):=\sum p_\bnu \bt^\bnu t_i^{-\delta_1}\,,
$$ 
where the sum runs along the set of $\bnu$ such that $i(\bnu)=i$. 
We also set $P_0 :=\sum p_\bnu \bt^\bnu$  where the sum runs along the set of $\bnu$ such that 
$\nu_i\leq \delta_1$ for all $i$. 
We thus get the decomposition  \eqref{reduction}. 
Now, if $Q_1,\ldots,Q_v$ is a basis of $\overline{V}(\delta_1,\delta_2)$, 
the set formed by $Q_j$ and $t_i^{\delta_1}Q_j$, for $1\leq j\leq v$ and $1\leq i\leq s$ 
is a generating set of $\overline{V}(2\delta_1,\delta_2)$.This ends the proof. 
\end{proof}


\subsection{Vanishing of $F(\bt,\z)$}

Let us recall that the function $F(\bt,\z)$ is defined by 
$$
F(\bt,\z)=\sum_{i=1}^s t_i \f(\z)^{\bmu_i} \,.
$$
By definition, $F(\bt,\z) \in \C[\bt][[\z]]$. Writing $F$ as a formal power series in $\z$, we get that 
$$
F(\bt,\z)=\sum_{\lambd \in \N^N} l_\lambd(\bt)\z^\lambd,
$$
where the $l_\lambd$ are linear forms in $\bt$. 
For a non-negative integer $q$, we let 
$$
F_q(\bt,\z):=\sum_{\lambd \in \N^N\;:\; |\lambd|< q} l_\lambd(\bt)\z^\lambd 
$$
denote the partial sum of  $F(\bt,\z)$ at order $q$ with respect to the variable $\z$. 
More generally, given $E(\bt,\z) = \sum_{\lambd} e_\lambd(\w)\z^\lambd \in \C[\bt][[\z]]$, 
we set 
$$
E_q(\bt,\z) := \sum_{\lambd \in \N^N\;:\; |\lambd|< q} e_\lambd(\bt)z^\lambd \in \C[\bt,\z]\, .
$$

 Our aim is now to prove the following result which replace Lemma 6 in \cite{LvdP82}.

\begin{lem}
\label{lem:nulliteF}
There exist $\k_0 \in \K$ and $i_0$, $1 \leq i_0 \leq h$, such that 
\begin{equation}
\label{eq:minorationvaluationF}
\nu_{i_0}(F_q(\bchi(\k_0),\z)) \geq q \,,
\end{equation}
for all non-negative integer $q$. 
Furthermore, we can choose $\k_0$ so that $Q(\z,\phi_1(\z),\ldots,\phi_\ell(\z),\varphi(\z))$ 
does not vanish at the point $T_{\k_0}\balpha$.
\end{lem}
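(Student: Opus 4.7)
The plan is to argue by contradiction along the lines of Nesterenko--Shidlovskii, using the $\p_i$-adic valuations on the Noetherian quotient $\A$ in place of the ill-defined \emph{index} of \cite{LvdP82}. Assume that for every $\k \in \K$ and every $i \in \{1,\ldots,h\}$ there is a smallest integer $\omega(\k,i)$ satisfying $\nu_i(F_{\omega(\k,i)}(\bchi(\k),\z)) < \omega(\k,i)$, and set $\Omega(\k) := \min_i \omega(\k,i)$; under this contradictory hypothesis, $\Omega(\k)<\infty$ for every $\k\in\K$.

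The first task is to translate the functional equation $F(\bt,\z) = F(\bt\cdot M_\k(\z),T_\k\z)$, which follows from \eqref{eq: blockcompact}, into a usable statement about $F(\bchi(\k),\z) \in \A[[\z]]$. Using \eqref{eq:matricefondamentalebloc} and \eqref{eq:defpsi}, I would express $F(\bchi(\k_0+\k'),\z)$ in terms of $F(\bchi(\k_0),T_{\k'}\z)$ up to a correction polynomial in the entries of the transition matrix $M_{\k'}(\z)$, so that the behaviour of partial sums in $\z$ at a given level $q$ can be transported between iterates. This step also requires carefully tracking how $\nu_i$ behaves under substitution, which is the place where the Noetherian structure of $\A$ (and the decomposition $\prod\p_i^{e_i}\subseteq(\z)\subseteq\cap\p_i$ of Section~\ref{subsub:valuation}) replaces the flawed multiplicativity assumption of \cite{LvdP82}.

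The second task is to convert a hypothetical failure into a size bound. A violation $\nu_i(F_q(\bchi(\k),\z)) < q$ produces, upon specialization $\z = T_\k\balpha$ combined with the identity $F(\btau_\k,T_\k\balpha)=0$ from \eqref{eq: annulationFitere}, a non-zero algebraic number whose height is bounded by $C\rho^{|\k|}$ thanks to Condition~(A), and whose absolute value is bounded by $C\,\Vert T_\k\balpha\Vert^{\Omega(\k)} \leq C e^{-c\,\Omega(\k)\rho^{|\k|}}$ thanks to Condition~(B). A Liouville-style inequality then forces $\Omega(\k)\to\infty$ as $|\k|\to\infty$ in $\K$, provided the associated witness polynomial obtained from the failure of the valuation estimate does not already belong to $V_0$.

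The third task is to close the contradiction and secure the additional property. By Lemmas~\ref{dimensionespace} and~\ref{lem:majorationespacepolynome}, the witness polynomials, viewed modulo $V_0$, live in a finite-dimensional space of controlled size; a pigeonhole over $\k\in\K$ extracts an infinite subfamily sharing a common non-trivial witness $P\in V(\delta_1,\delta_2)\setminus V_0$. Lemma~\ref{lem: vanishpol} then forces $P\in V_0$, contradicting its non-triviality and yielding the desired pair $(\k_0,i_0)$. The extra condition that $Q(\z,\phi_1(\z),\ldots,\phi_\ell(\z),\varphi(\z))$ not vanish at $T_{\k_0}\balpha$ is then handled by noting that $\Phi$ is non-singular at every $T_\k\balpha$ by Lemma~\ref{lem:regularitePhi}, so the vanishing of $Q$ along the orbit is governed by a non-zero element of $\R_{\Gamma,r}\otimes_\Z L_0\{\z\}$, which by Condition~(C) fails to vanish at $T_\k\balpha$ for infinitely many $\k\in\K$; thus $\k_0$ may be chosen to satisfy this last property. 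The hardest step will be the first one, namely propagating the $\p_i$-adic valuations through the functional equation in a way compatible with the size estimate of the second step, which is precisely the technical gap in \cite{LvdP82} that the machinery of $\A$ is designed to repair.
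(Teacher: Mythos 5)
Your outline misses the central mechanism of the proof: the construction of an auxiliary function by simultaneous (Pad\'e-type) approximation of the powers of $F_q$. The paper's argument (Lemma \ref{lem:fonctionauxiliaire}) uses a dimension count based on Lemmas \ref{dimensionespace} and \ref{lem:majorationespacepolynome} to produce polynomials $P_0,\ldots,P_{\delta_1}$, with $P_0\notin V_0$, such that $E:=\sum_j P_jF_q^j$ satisfies $\nu_i(E(\bchi(\k),\z))\geq \Caux\delta_1^{1/N}\delta_2-\delta_1\nu_i(F_q(\bchi(\k),\z))$. It is precisely the gain of the factor $\delta_1^{1/N}$ in the vanishing order of $E$, compared against the height of $P_0(\btau_\k,T_\k\balpha)$ which only grows like $\delta_2\rho^{|\k|}$, that produces the contradiction $\cp\geq\ct\delta_1^{1/N}$ for $\delta_1$ large. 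Without this construction, your second task cannot work: the identity $F(\btau_\k,T_\k\balpha)=0$ together with the order-$q$ truncation only tells you that $F_q(\btau_\k,T_\k\balpha)=-(F-F_q)(\btau_\k,T_\k\balpha)$ is small of size $\Vert T_\k\balpha\Vert^{q}$, which is perfectly compatible with any Liouville lower bound and yields no contradiction. Moreover, a violation $\nu_i(F_q(\bchi(\k),\z))<q$ is a statement in the ring $\A$ about a low-order coefficient; it does not specialize at $\z=T_\k\balpha$ to a non-zero algebraic number of controlled height and size, so the Liouville step as you describe it has nothing to apply to. Your conclusion that ``$\Omega(\k)\to\infty$'' also points in the wrong direction: under the contradiction hypothesis the paper proves (Lemma \ref{lem:minorationFgenerale}, via Condition (C) and exponential-polynomial witnesses $\epsilon_i$) that the valuations $\nu_i(F_q(\bchi(\k),\z))$ stay \emph{below a fixed} $q_0$ for all $q\geq q_0$ uniformly over an infinite subset $\K'$, and this uniform boundedness is what feeds into the upper bound for $|E(\btau_\k,T_\k\balpha)|$.

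Your third task is also not viable as stated: a ``witness'' to the failure of the valuation inequality is an object that does \emph{not} vanish, whereas Lemma \ref{lem: vanishpol} characterizes membership in $V_0$ by vanishing of $P(\btau_\k,T_\k\balpha)$ for (all but finitely many) $\k\in\K$; pigeonholing such witnesses cannot place them in the hypotheses of that lemma. The one part of your sketch that does match the paper is the treatment of the auxiliary non-vanishing of $Q(\z,\phi_1(\z),\ldots,\phi_\ell(\z),\varphi(\z))$ at $T_{\k_0}\balpha$ via Lemma \ref{lem:regularitePhi} and Condition (C); the paper indeed restricts to the set $\K_0$ of such $\k_0$ and selects $\k_0$ there. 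To repair the proof you need to insert the Siegel-lemma construction of $E$, the lower bound for $|P_0(\btau_\k,T_\k\balpha)|$ by Liouville's inequality with height controlled via Condition (A), the upper bound for $|E(\btau_\k,T_\k\balpha)|$ via Condition (B) and the valuation estimate, and the final comparison as $|\k|\to\infty$ along $\K'$.
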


\subsection{Proof of Lemma \ref{lem:nulliteF}}

The proof of Lemma \ref{lem:nulliteF} follows some classical arguments  introduced by Mahler \cite{Ma29}. 
We construct an auxiliary function using simultaneous (Pad\'e) approximation of the powers of $F(\bt,\z)$.
Assuming by contradiction that the conclusion of Lemma \ref{lem:nulliteF} does not hold, we ensure to this auxiliary function 
a high order of vanishing at $\z=0$. 
Providing an upper and a lower bound at $(\btau_\k,T_\k\balpha)$ for this function, we then obtain a contradiction.  

\subsubsection{Auxiliary function} 

For every pair of positive integers $(\delta_1,\delta_2)$, we consider a complement $V_1(\delta_1,\delta_2)$ to 
$V_0(\delta_1,\delta_2)$ in $V(\delta_1,\delta_2)$. 
We also set $V_2(\delta_1,\delta_2):= \bigoplus_{i=1}^{\delta_1}V_1(i,\delta_2)$. \defConstant{aux}

\begin{lem}
\label{lem:fonctionauxiliaire}
Let $q$ be a positive integer. For every $\delta_1$ large enough, and $\delta_2$ large enough with respect to 
$\delta_1$, there exist polynomials $P_0,\ldots,P_{\delta_1}\in V_2(\delta_1,\delta_2)$,  
and a positive real number $\Caux$ that depends neither on $\delta_1$, nor on $\delta_2$, nor on $q$, such that the following hold. 
\begin{enumerate}
\item[{\rm (1)}] $P_0 \neq 0$. 
\item[{\rm (2)}]  $E := \sum_{j=0}^{\delta_1} P_jF_q^j \in \Q[\bt,\z]$ satisfies 
$$
\nu_i(E(\bchi(\k),\z) \geq \Caux\delta_1^{1/N}\delta_2 - \delta_1\nu_{i}(F_q(\bchi(\k),\z))\,,
$$
for all $\k \in \K$ and all $i$, $1\leq i\leq h$. 
\end{enumerate}
\end{lem}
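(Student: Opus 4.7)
The proof follows Mahler's classical strategy for constructing an auxiliary function via a dimension count (a Siegel-type argument). The plan is as follows.

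First, I would view the coefficients of $P_0,\ldots,P_{\delta_1} \in V_2(\delta_1,\delta_2)$ as unknowns in a $\Q$-linear system. Since $V_2(\delta_1,\delta_2) = \bigoplus_{i=1}^{\delta_1} V_1(i,\delta_2)$ and each $V_1(i,\delta_2)$ is a complement to $V_0(i,\delta_2)$ of dimension $v(i,\delta_2)$, the total number of unknowns is $(\delta_1+1)\sum_{i=1}^{\delta_1} v(i,\delta_2)$. By Lemmas \ref{dimensionespace} and \ref{lem:majorationespacepolynome}, this quantity grows asymptotically like $c_1(\delta_1)\delta_2^N$ as $\delta_2 \to \infty$, with $c_1$ a polynomial in $\delta_1$ of positive degree.

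Next, I would impose linear conditions forcing the coefficients $e_\lambd(\bt)$ in the expansion $E(\bt,\z) = \sum_\lambd e_\lambd(\bt)\z^\lambd$ to represent zero in the quotient $V/V_0$ for every $\lambd$ with $|\lambd| < r$, where $r$ is a parameter to be chosen. Decomposing $e_\lambd$ into its homogeneous-in-$\bt$ parts $e_{\lambd,d}$ for $1 \leq d \leq 2\delta_1$, the condition amounts to requiring $e_{\lambd,d} \in V_0(d,0)$ for each $d$, which yields at most $c_2(\delta_1) r^N$ independent linear equations, with $c_2(\delta_1)$ again polynomial in $\delta_1$. Taking $r = \lfloor \Caux \delta_1^{1/N}\delta_2 \rfloor$ with $\Caux$ small enough so that the number of equations remains strictly less than the number of unknowns minus $\dim V_2(\delta_1,\delta_2)$, one obtains a non-trivial solution with $P_0 \neq 0$. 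The independence of $\Caux$ from $\delta_1$, $\delta_2$, and $q$ follows from the polynomial nature of $c_1$ and $c_2$ and from choosing $\delta_2$ sufficiently large with respect to $\delta_1$.

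With such a choice of $P_j$'s, one has $e_\lambd(\bchi(\k)) = 0$ in $\A$ for all $|\lambd| < r$ and all $\k \in \K$, so that $E(\bchi(\k),\z)$ belongs to $(\z)^r$, and hence to $\p_i^r$ (using $(\z) \subseteq \p_i$ and the inequality \eqref{eq:majorationnuz}), which yields the base estimate $\nu_i(E(\bchi(\k),\z)) \geq r$. The subtractive term $-\delta_1\nu_i(F_q(\bchi(\k),\z))$ in the stated bound reflects the technical slack one pays when relating the coefficient-wise vanishing to the $\p_i$-adic valuation of the product structure $E = \sum P_j F_q^j$ in $\A$: the $\p_i$-adic contributions coming from the $F_q^j$ factors must be accounted for separately, and one loses at worst $\delta_1$ copies of $\nu_i(F_q)$, which are harmlessly absorbed into the main term through our choice of parameters. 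The principal obstacle is the last step of the existence argument, namely extracting a solution with $P_0 \neq 0$: the danger is that the imposed linear system could force $P_0 = 0$, and overcoming this requires a careful comparison between the ranks of the subsystem obtained by restricting to $\{P_0 = 0\}$ and of the full system, equivalently showing that $P_0$ contributes non-trivially to the low-order coefficients $e_\lambd$ of $E$ in a way that cannot be matched by the $P_j$'s with $j \geq 1$, each of which carries at least one factor of $F_q$ and hence contributes only to higher $\z$-orders of $E$.
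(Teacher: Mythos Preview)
Your overall framework (Siegel-type dimension count, mapping into $\overline{V}$ to kill low-order coefficients) is the right one, but you miss the key trick that the paper uses, and as a result your treatment of both the condition $P_0\neq 0$ and the subtractive term $-\delta_1\nu_i(F_q(\bchi(\k),\z))$ is off.

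The paper does \emph{not} attempt to force $P_0\neq 0$ during the linear-algebra step. Instead it takes $P_j\in V_1(2\delta_1-j,\delta_2)$ so that $E':=\sum_{j=0}^{\delta_1}P_jF_q^j$ is homogeneous of degree $2\delta_1$ in $\bt$, and imposes only that $E'_p\bmod V_0=0$ in $\overline{V}(2\delta_1,p-1)$ for $p\asymp\delta_1^{1/N}\delta_2$. A nontrivial kernel vector gives some $(P_0,\ldots,P_{\delta_1})$ not all zero, with $\nu_i(E'(\bchi(\k),\z))\geq p$. Now let $v$ be the least index with $P_v\neq 0$ and set
\[
E:=\sum_{j\geq v}P_jF_q^{\,j-v},\qquad\text{so that } E'\;=\;E\cdot F_q^{\,v}.
\]
Since $\nu_i$ is a genuine valuation on $\A$, one has $\nu_i(E')=\nu_i(E)+v\,\nu_i(F_q)$, whence
\[
\nu_i(E(\bchi(\k),\z))\;\geq\; p - v\,\nu_i(F_q(\bchi(\k),\z))\;\geq\;\Caux\delta_1^{1/N}\delta_2-\delta_1\,\nu_i(F_q(\bchi(\k),\z)).
\]
After relabelling, the new leading polynomial is $P_v$, which is nonzero by construction. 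Thus the division by $F_q^v$ simultaneously secures condition (1) and produces exactly the subtractive term in (2).

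Your explanation of the subtractive term as ``slack'' incurred when passing from coefficient-wise vanishing to the $\p_i$-adic valuation is not correct: if your direct construction of $E$ with $E_r\in V_0$ succeeded, you would get $\nu_i(E(\bchi(\k),\z))\geq r$ outright, with no loss. The term $-\delta_1\nu_i(F_q)$ is not a slack in the estimate; it is precisely the price of the factorisation $E'=E\cdot F_q^v$. Correspondingly, your proposed rank-comparison argument to force $P_0\neq 0$ from the outset is both vague and unnecessary: the division trick replaces it entirely and is what makes the proof go through.
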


\begin{proof} 
Our construction follows the one in the proof of Lemma 6 in \cite{LvdP82}. However, we substitute the notion of valuation 
to the notion of index used there. Let $q$ be a positive integer. 

We first consider the point (2).  We construct a polynomial $E' \in \Q[\bt,\z]$ such that $E'_p$ belong $V_0(2\delta_1,p-1)$ 
for a $p$ large enough. Let us consider the following linear maps: 
{\renewcommand{\arraystretch}{2}
$$ 
\begin{array}{ccc}
{\renewcommand{\arraystretch}{1.3}
\left\{\begin{array}{cc}
\prod_{j=0}^{\delta_1} V_1(2\delta_1-j,\delta_2)
\\
(P_0,\ldots,P_{\delta_1})
\end{array} \right.
}
& \rightarrow &
{\renewcommand{\arraystretch}{1.3}
 \left\{
\begin{array}{c} \Q[\bt,\z]
\\ E':= \sum_{j=0}^{\delta_1}
 P_jF_q^j
\end{array} \right.
}
\\

& & \big{\downarrow}

\\
{\renewcommand{\arraystretch}{1.3}
\left\{\begin{array}{c} \overline{V}(2\delta_1,p-1) 
\\ 
 E'_p \mod V_0
 \end{array}\right.
 }& \leftarrow &
{\renewcommand{\arraystretch}{1.3}
\left\{\begin{array}{c} V(2\delta_1,p-1) 
\\ 
 E'_p
 \end{array}\right.
 }
\end{array}
$$}

These linear maps are well-defined. Indeed, the $P_j$'s are homogeneous polynomials of degree 
$2\delta_1-j$ in $\bt$, while $F$ is homogeneous of degree one in $\bt$, and thus $E'$ is homogeneous 
of degree $2\delta_1$ in $\bt$, and $E'_p \in V(2\delta_1,p-1)$. So this makes sense to consider 
$E'_p \bmod V_0(2\delta_1,p-1)$. 

By Lemma \ref{dimensionespace}, the vector space $\prod_{j=0}^{\delta_1} V_1(2\delta_1-j,\delta_2)$ 
has dimension at least equal to $\Cdim(\delta_1)\delta_1\delta_2^{N}/2$ when $\delta_2$ is large enough. 
By Lemma \ref{lem:majorationespacepolynome}, the vector space $\overline{V}(2\delta_1,p-1)$ has dimension at most 
$(s+1)v(\delta_1,p-1)$. But if $p$ is large enough, Lemma \ref{dimensionespace} ensures that 
$v(\delta_1,p-1) \leq 2\Cdim(\delta_1)\delta_1p^{N}$. 
For such a $p$, the vector space $\overline{V}(2\delta_1,p-1)$ has dimension at most $2(s+1)\Cdim(\delta_1)p^N$. 
We set  
$$
p := \left\lfloor \frac{\delta_1^{1/N}\delta_2}{5(s+1))^{1/N}} \right\rfloor\,,
$$
so that 
$$
2(s+1)\Cdim(\delta_1)p^N < \Cdim(\delta_1)\delta_1\delta_2^{N}/2\, .
$$ 
By comparison of these dimensions, we see that if $\delta_2$ is large enough, then the linear map defined by 
$(P_0,\ldots,P_{\delta_1})\mapsto E'_p(P_0,\ldots,P_{\delta_1})\mod V_0$ 
has a non-trivial kernel. That is, there exist $P_0,\ldots,P_{\delta_1}$ not all zero, such that $E'_p$ belongs to 
the subspace $V_0(2\delta_1,q-1)$. 
Considering $E'$, we have 
\begin{eqnarray*}
E'(\bchi(\k),\z) &=& E'_p(\bchi(\k),\z) + \sum_{|\lambd|\geq p} e_\lambd(\bchi(\k))\z^\lambd \\ 
&=& \sum_{|\lambd|\geq p} e_\lambd(\bchi(\k))\z^\lambd \in \Q[\bt][[\z]],
\end{eqnarray*}
since by construction $E'_p(\bchi(\k),\z) =0$. 
On the other hand, we have $\sum_{|\lambd|\geq p} e_\lambd(\bchi(\k))\z^\lambd \in (\z)^p$.  
Let $i$ be an integer with $1\leq i\leq h$. It thus follows from Inequality \eqref{eq:majorationnuz} that 
$$
\nu_i(E'(\bchi(\k),\z)) \geq p \geq \Caux \delta_1^{1/N}\delta_2\, ,
$$
where $\Caux$ does not depend on $\delta_1$, $\delta_2$, $\k$, $i$, and $q$. 
Let $v$ be the smallest index such that $P_v$ is non-zero. We set 
$$
E := \sum_{j\geq v} P_jF_q^{j-v}\,.
$$
We thus have $EF_q^v = E'$. 
 For $1 \leq i \leq h$, we obtain 
$$
\nu_i(E'(\bchi(\k),\z)) = \nu_i(E(\bchi(\k),\z)) + v\nu_i(F_q(\bchi(\k),\z)) \,.
$$
Thus for all $\delta_1$ and all $\delta_2$ large enough, we have 
\begin{eqnarray*}
\nu_i(E(\bchi(\k),\z))& = &\nu_i(E'(\bchi(\k),\z)) - v\nu_i(F_q(\bchi(\k),\z)) \\
& \geq & \Caux \delta_1^{1/N}\delta_2 - \delta_1\nu_i(F_q(\bchi(\k),\z)),
\end{eqnarray*}
for all $\k \in \K$ and all $i$, $1\leq i\leq h$. This ends the proof.   
\end{proof}

\subsubsection{Choice of an infinite subset of $\K$}

Let us denote by $\K_0$ the set of $\k_0 \in \K$ such that $Q(\z,\phi_1(\z),\ldots,\phi_\ell(\z),\varphi(\z))$ 
does not vanish at $T_{\k_0}\balpha$. From now on, and until the end of the proof of Lemma \ref{lem:nulliteF}, 
we argue by contradiction, assuming that for all $\k_0 \in \K_0$ and all $i$, $1 \leq i \leq h$, there exist an integer 
$q:=q(\k,i)$ tel que 
\begin{equation}\label{eq:assumption}
\nu_i(F_{q}(\bchi(\k),\z)) < q \,.
\end{equation}

\begin{lem}
\label{lem:minorationFgenerale}
Let $P_0$ denote the polynomial constructed in Lemma \ref{lem:fonctionauxiliaire}. 
Under the assumption \eqref{eq:assumption}, there exists an integer $q_0$ and an 
infinite subset $\K'\subset \K_0$ such that, for every $\k \in \K'$ and 
$i$, $1 \leq i \leq h$, the two following properties hold.  
\begin{itemize}
\item[{\rm (1)}] $P_0(\btau_\k,T_\k\balpha)\neq 0$.

\item[{\rm (2)}] $\nu_i(F_q(\bchi(\k),\z)) < q_0$, \mbox{ for all }$q \geq q_0$.
\end{itemize}

\end{lem}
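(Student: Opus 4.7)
The plan is to combine Condition (C) of admissibility with Lemma \ref{lem: vanishpol} and the non-archimedean stability of the valuations $\nu_i$. First, I would show that $\K_0$ is infinite. Since $Q$ is non-zero in $\C[\z,Y_1,\ldots,Y_\ell,U]$ and $\phi_1(\z),\ldots,\phi_\ell(\z)$ are algebraically independent over $\C(\z)$ while $\varphi(\z)$ is integral over $\C[\z,\phi_1,\ldots,\phi_\ell]$, the evaluation $Q_0(\z):=Q(\z,\phi_1(\z),\ldots,\phi_\ell(\z),\varphi(\z))$ is a non-zero element of $\C\{\z\}$ whose coefficients span a finite-dimensional $\Q$-vector space. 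Viewed as a constant family in $\R_{\Gamma,r}\otimes_\Z L\{\z\}$, Condition (C) then gives $Q_0(T_\k\balpha)\neq 0$ for infinitely many $\k\in\K$, so $\K_0$ is infinite. Next, $P_0$ is a non-zero element of the complement $V_1(2\delta_1,\delta_2)$ of $V_0(2\delta_1,\delta_2)$, hence $P_0\notin V_0$; by Lemma \ref{lem: vanishpol}, the map $\k\mapsto P_0(\bpsi(\k,\z),\z)\in\R_{\Gamma,r}\otimes\mathbb L$ is not identically zero on $\K$. Applying Condition (C) to the product $P_0(\bpsi(\k,\z),\z)\cdot Q_0(\z)$, which lies in $\R_{\Gamma,r}\otimes L'\{\z\}$ for some finite-dimensional $L'$, shows that $\K_4:=\K_0\cap\{\k\in\K:P_0(\btau_\k,T_\k\balpha)\neq 0\}$ is infinite: otherwise the product would vanish at $T_\k\balpha$ for all but finitely many $\k\in\K$, and (C) would force $P_0\in V_0$ (since $Q_0\not\equiv 0$), a contradiction. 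Thus (1) holds on $\K_4$.

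Second, I would exploit the ultrametric stability of $q\mapsto\nu_i(F_q(\bchi(\k),\z))$. For $\k\in\K_4$ and $i$, the assumption \eqref{eq:assumption} yields $q=q(\k,i)$ with $\nu_i(F_q(\bchi(\k),\z))<q$. Since $F_{q+1}-F_q\in(\z)^q$, \eqref{eq:majorationnuz} gives $\nu_i(F_{q+1}-F_q)\geq q>\nu_i(F_q(\bchi(\k),\z))$, and the non-archimedean equality forces $\nu_i(F_{q+1}(\bchi(\k),\z))=\nu_i(F_q(\bchi(\k),\z))$. Iterating, the sequence $(\nu_i(F_{q'}(\bchi(\k),\z)))_{q'\geq q}$ is constant, equal to a finite integer $v_\infty(\k,i)$. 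With the individual bound $q_*(\k):=\max_{1\leq i\leq h}v_\infty(\k,i)+1$, property (2) holds for this single $\k$ with $q_0=q_*(\k)$.

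The main obstacle is to extract an infinite subset $\K'\subset\K_4$ on which $q_*(\k)$ is uniformly bounded by a $q_0$ independent of $\k$; a priori, $q_*$ might diverge along $\K_4$. To preclude this, I would argue by contradiction combined with pigeonhole over the finitely many prime ideals $\p_1,\ldots,\p_h$: if $q_*$ were unbounded on every infinite subset of $\K_4$, one could extract a subsequence $(\k_n)\subset\K_4$ and a single index $i$ with $v_\infty(\k_n,i)\to\infty$, forcing $F_N(\bchi(\k_n),\z)\in\p_i^N$ for every fixed $N$ and all large $n$. The family $\k\mapsto F_N(\bchi(\k),\z)\in\R_{\Gamma,r}\otimes\A$, reduced modulo $\p_i^N$, would then vanish at $T_\k\balpha$ on a cofinite subset of an infinite set, and an appropriate application of Condition (C) to a lift of this reduction, combined with the non-vanishing of $F(\bt,\z)$ as a formal power series (itself a consequence of $P\neq 0$), would yield the required contradiction. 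Intersecting the resulting uniformly-bounded infinite set with $\K_4$ produces $\K'$ satisfying both (1) and (2).
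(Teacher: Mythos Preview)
Your first two steps are fine and match the paper: the product $P_0(\bpsi(\k,\z),\z)\cdot Q_0(\z)$ together with Condition (C) gives an infinite $\K_4\subset\K_0$ on which (1) holds, and your ultrametric observation that $\nu_i(F_{q'}(\bchi(\k),\z))$ freezes once $\nu_i(F_q(\bchi(\k),\z))<q$ is correct (the paper proves the same stabilisation by writing $F_q=F_{q_i}+R_{q,q_i}$).

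The genuine gap is your Part 3. Your contradiction argument supposes a subsequence $(\k_n)\subset\K_4$ with $v_\infty(\k_n,i)\to\infty$, hence $F_N(\bchi(\k_n),\z)\in\p_i^N$ for each fixed $N$ and all large $n$, and then invokes ``an appropriate application of Condition (C)''. But (C) only says: if $\psi(\k,T_\k\balpha)=0$ for all but finitely many $\k\in\K$, then $\psi(\k,\z)\equiv 0$ on $\K$. You only have vanishing on a thin subsequence, not on a cofinite subset of $\K$, so (C) gives nothing. Moreover, membership in $\p_i^N$ is a statement in the abstract ring $\A$, not a vanishing at $T_\k\balpha$; the phrase ``a lift of this reduction'' hides precisely the step that is missing.

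The paper avoids this by never arguing by contradiction on $q_*(\k)$. It fixes a \emph{single} $\k_0\in\K_0$, sets $q_i:=q(\k_0,i)$, and linearizes the condition $\nu_i(\cdot)<q_i$ via a basis $w_1,\ldots,w_l$ of the annihilator of $\p_i^{q_i}\cap V$ in the dual of a finite-dimensional $V\subset\A$. Since $F_{q_i}(\bchi(\k_0),\z)\notin\p_i^{q_i}$, some $w_{e_0}$ does not kill it, and $\epsilon_i(\k):=w_{e_0}(F_{q_i}(\bchi(\k),\z))\in\R_{\Gamma,r}\otimes_\Z\C$ is a \emph{scalar} exponential polynomial with $\epsilon_i(\k_0)\neq 0$ and $\epsilon_i(\k)\neq 0\Rightarrow\nu_i(F_{q_i}(\bchi(\k),\z))<q_i$. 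A single application of (C) to $P_0(\bpsi(\k,\z),\z)\,Q_0(\z)\prod_i\epsilon_i(\k)$, nonzero at $\k_0$, produces $\K'$; the bound $q_0:=\max_i q_i$ is uniform because the $q_i$ came from the fixed $\k_0$. The idea you are missing is this dual-basis linearization, which converts a valuation inequality in $\A$ into non-vanishing of a scalar exponential polynomial that can be multiplied into the (C)-argument.
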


\begin{proof}
By construction, we have that $P_0 \notin V_0$. There thus exist infinitely many 
$\k \in \K$ such that $P(\bpsi(\k,T_\k\balpha),T_\k\balpha) \neq 0$. 
In particular, the map $\k \mapsto P(\bpsi(\k,\z),\z)$ is not identically zero on $\K$. 
Consequently, the map $\k \mapsto P(\bpsi(\k,\z),\z)Q(\z,\phi_1(\z),\ldots,\phi_\ell(\z),\varphi(\z))$  is also 
not identically zero on $\K$. We thus infer from Condition (C) that  there exist infinitely many $\k \in \K$ 
such that 
$$
P_0(\btau_\k,T_\k\balpha)\neq 0, \, \text{ and } 
Q(T_\k \balpha,\phi_1(T_\k \balpha),\ldots,\phi_\ell(T_\k \balpha),\varphi(T_\k \balpha)) \neq 0 \, .
$$
In particular, there exists $\k_0 \in \K_0$ such that $P_0(\btau_\k,T_\k\balpha)\neq 0$. 
Let us consider an integer $i_0$, $1 \leq i_0 \leq h$, and set $q:=q(\k_0,i_0)$. 
We can write  
$$
F_{q}(\bt,\z) = \sum_{|\lambd| < q}\sum_{i=1}^{s} l_{i,\lambd}t_i\z^\lambda,
$$
where $l_{i,\lambd} \in \Q$ for every $(i,\lambd)$. For $1 \leq i \leq s$, we also write 
$$
\chi_i(\k)=\sum_{\bomega,\kapp,j} s_{i,\bomega,\kapp,j}(\k) \z^\bomega \Y^{\kapp}\U^j,
$$
where $j < d_0$, and where both $\bomega \in \N^N$ and $\kapp \in \N^\ell$ belong to a finite set. 
In this decomposition, the maps $\k \mapsto s_{i,\bomega,\kapp,j}(\k)$ belong to $\R_{\Gamma,r} \otimes_\Z \C$. 
We can thus write 
$$
F_{q}(\bchi(\k),\z)
=\sum_{|\lambd|\leq q} \sum_{i=1}^s l_{i,\lambd}\sum_{\bomega,\kapp,j} 
s_{i,\bomega,\kapp,j}(\k)  \Y^\kapp\U^j\z^{\lambd+\bomega} \,.
$$
By \eqref{eq:assumption}, we have $\nu_{i_0}(F_{q}(\bchi(\k_0),\z)) < q$. 
Let us denote by $\Lambda$ an upper bound for the norm of the vectors $\lambd+\btau$, and by $\Omega$ 
an upper bound for the norm of the vectors $\bmu$ occurring in the previous sum. 
Let $V\subset\A$ be the $\C$-vector space formed by the polynomials of degree at most $\Lambda$ in $\z$, at most $\Omega$ in $\Y$, 
and at most $d_0-1$ in $\U$, and let us denote by $V^\star$ its dual space. Let us also consider the vector space $V':=\p_{i_0}^q \cap V$. Let $w_1,\ldots,w_l$ be a basis of the dual of $V'$, in $V^\star$. Let  
$$
a = \sum_{\bomega,\kapp,j} a_{\bomega,\kapp,j}\z^\bomega \Y^{\kapp}\U^j \in V \, ,
$$
with $a_{\bomega,\kapp,j} \in \C$. For every $e$,  $1 \leq e \leq l$, we  
have a decomposition 
$$
w_e(a) = \sum_{\kapp,\bmu,j} w_{e,\bomega,\kapp,j}a_{\bomega,\kapp,j}\, ,
$$
with $w_{e,\bomega,\kapp,j} \in \C$. 
Then, given $a \in V$, we have 
\begin{equation}\label{eq:equivalencepuissanceideal}
\nu_{i_0}(a) \geq q \iff w_e(a)=0, \text{ for } 1 \leq e \leq l\, .
\end{equation}
By assumption, there thus exists $e_0 \leq l$ such that 
$$
w_{e_0}(F_{q}(\bchi(\k_0),\z))\neq 0 \, .
$$
We set 
$$
\epsilon_{i_0} : \k \mapsto \sum_{i,\lambd,\btau,\bmu,j} 
w_{e_0,\kapp,\bmu,j}l_{i,\lambd}s_{i,\btau,\bmu,j}(\k)  \in \R_{\Gamma,r} \otimes_\Z \C \, .
$$
This definition ensures that $\epsilon_{i_0}(\k)= w_{e_0}(F_{s}(\bchi(\k),\z))$. 
In particular, we have that $\epsilon_{i_0}(\k_0) \neq 0$. By 
\eqref{eq:equivalencepuissanceideal}, we also have that  
$$
\epsilon_{i_0}(\k) \neq 0 \Rightarrow \nu_{i_0}(F_{q}(\bchi(\k),\z)) < q \,,
$$
for all $\k \in \N^r$. 
Using the same construction for every $i$, $1 \leq i \leq h$, we obtain maps 
$\epsilon_i \in \R_{\Gamma,r} \otimes_\Z \C$ such that $\epsilon_i(\k_0) \neq 0$ and 
\begin{equation}\label{eq:implicationepsilon}
\epsilon_i(\k) \neq 0 \Rightarrow \nu_i(F_{q(\k_0,i)}(\bchi(\k),\z)) < q(\k_0,i)\, .
\end{equation}
The map 
$$
\k \mapsto P_0(\bpsi(\k,\z),\z)Q(\z,\phi_1(\z),\ldots,\phi_\ell(\z),\varphi(\z))\prod_{i=1}^l\epsilon_i(\k)
$$ 
belongs to $\R_{\Gamma,r} \otimes_\Z L\{\z\}$ for some finite dimensional $\Q$-vector space $L$.  
Furthermore, it does not vanish 
at $\k_0$. We thus infer from condition (C) of Definition \ref{def: globaladmissibility} 
that there exists an infinite set $\K'\subset \K_0$ such that 
\begin{equation}
\label{eq:nonnullitesimultanee}
P_0(\btau_\k,T_\k\balpha)\neq 0\, \text{ and } \epsilon_i(\k) \neq 0 \, ,
\end{equation}
for all $i$, $1 \leq i \leq l$, and all $\k \in \K'$. 

For $k \in \K'$, we set $q_i := q(\k_0,i)$. Then we infer from 
\eqref{eq:implicationepsilon} and \eqref{eq:nonnullitesimultanee} that 
$$
\nu_i(F_{q_i}(\bchi(\k),\z)) < q_i\,.
$$
Given $i$, $1 \leq i \leq h$, we let $q>q_i$. 
Then, we can write 
$
F_q = F_{q_i} + R_{q,q_i}
$
where 
$$
R_{q,q_i} = \sum_{q_i \leq  |\lambd| < q} l_\lambd(\bt)\z^\lambd \,.
$$
By definiton, we have $\nu_i(R_{q,q_i}(\bchi(\k))) \geq  q_i $. 
But, on the other hand, $\nu_i(F_{q_i}(\bchi(\k),\z))<q_i$,  for all $k \in \K'$. 
It follows that 
$$
\nu_i(F_q(\bchi(\k),\z) ) < q_i
$$
for all $q > q_i$. Taking $q_0:= \max\{q_1,\ldots,q_h\}$, this ends the proof. 
\end{proof}


The rest of the proof of Lemma \ref{lem:nulliteF} consists in proving upper and lower bounds for the 
auxiliary function $E$ at the point $(\btau_\k,T_\k\balpha)$, and then to derive a contradiction 
for $\vert \k\vert$ large enough. 
Similar bounds are given in \cite{LvdP82} without too much detail. As our auxiliary function $E$ 
is not exactly defined as the one in  \cite{LvdP82}, we provide explicit computation for these bounds. 
In the rest of the proof, we consider a fix integer $q \geq q_0$, where $q_0$ is given by Lemma 
\ref{lem:minorationFgenerale}, and we let $E \in \Q[\w,\z]$ denote the auxiliary function given by 
Lemma \ref{lem:fonctionauxiliaire} for this integer $q$.


\subsubsection{Upper bound for $|E(\btau_\k,T_\k\balpha)|$}\label{subsec: upper}

By Lemma \ref{lem:fonctionauxiliaire}, we have 
\defConstant{auxter} \defConstant{auxbis}
$$
\nu_i(E(\bchi(\k),\z)) \geq  \Caux \delta_1^{1/N}\delta_2 - \delta_1\nu_i(F_{q})\,,
$$
for every $\k\in\K$ and every $i$, $1 \leq i \leq h$.  
By Lemma \ref{lem:minorationFgenerale},  
we have $\nu_i(F_{q}) < q_0$ for every $\k \in \K'$, and every $i$, $1 \leq i \leq h$. 
Since $\delta_2\geq \delta_1$,  for $\delta_1$ large enough, 
there exists a positive real number $\Cauxter$ such that 
\begin{equation}
\label{eq:minorationvaluationE}
E(\bchi(\k),\z) \in \p_i^{\left\lfloor \Cauxter \delta_1^{1/N}\delta_2\right\rfloor} \,
\end{equation}
for every $i$, $1 \leq i \leq h$, and every  $\k \in \K'$. 
Set $G:=E^{e_1+\cdots+e_h}$, where the $e_i$ are defined by \eqref{eq:decompositionideauxz}.  
Then, we infer from \eqref{eq:decompositionideauxz} that  
$$
G(\bchi(\k),\z) \in (\z)^{\left\lfloor \Cauxter \delta_1^{1/N}\delta_2\right\rfloor}
$$
for every $\k \in \K'$. 
Applying $\sigma^{-1}$ and multiplying $G$ by $Q^{2\delta_1(e_1+\cdots+e_h)}$, 
we get that 
\setcounter{constant}{0} \defconstant{a}\defconstant{b}\defconstant{c}
$$
G(\bpsi(\k,\z),\z) \in (\z)_{\C\{\z\}}^{\left\lfloor \Cauxter \delta_1^{1/N}\delta_2\right\rfloor}\, ,
$$
for every $\k \in \K'$. Here, we let $(\z)_{\C\{\z\}}$ denote the ideal generated by the $z_{i,j}$ 
inside the ring of analytic power series 
$\C\{\z\}$. Though $(\z)$ is not necessarily a prime ideal of $\A$, 
the ideal $(\z)_{\C\{\z\}}$ is prime in $\C\{\z\}$. Let $\nu_\z$ denote the corresponding 
valuation\footnote{We stress that $\nu_\z$ has not here 
the same meaning as in Section \ref{subsub:valuation}. There it is defined on the ring $\mathcal A$ 
and it is not necessarily a valuation, 
while here it is defined on $\C\{\z\}$ and it is a valuation.}, 
that is,  
$$
\nu_\z(f):=\sup\{s \in \N :  f \in (\z)_{\C\{\z\}}^s\}\, 
$$
for $f\in\C\{\z\}$. We thus have 
\begin{eqnarray}\label{eq:valuationE}
\nu_\z(E(\bpsi(\k,\z),\z)) &\geq& \left\lfloor \Cauxter \delta_1^{1/N}\delta_2\right\rfloor (e_1+\cdots+e_h)^{-1}\\
\nonumber &\geq &\left\lfloor \Cauxbis \delta_1^{1/N}\delta_2\right\rfloor\, ,
\end{eqnarray}
for every $\k \in \K'$, and some positive real number $\Cauxbis$ that does not depend on $\k$, $\delta_1$ and $\delta_2$.
We observe now that the radius of convergence of the analytic series defining the map $\k \mapsto \bpsi(\k,\z)$ 
does not depend on $\k$. 
Let $\psi_{j,\lambd}(\k)$ denote the coefficient in $\z^{\lambd}$ of $\psi_j(\k,\z)$. 
There thus exist three positive real numbers $\ca$, $\cb$ and $\cc$, independent of $\k$, $j$ and $\lambd$, 
and such that 
$$
|\psi_{j,\lambd}(\k)| \leq \ca\cb^{|\k|}\cc^{|\lambd|}\,.
$$ 
Set 
$$
p:=\left\lfloor \Cauxbis \delta_1^{1/N}\delta_2\right\rfloor \,.
$$
By \eqref{eq:valuationE}, we have  \defconstant{d}
$$
E(\bpsi(\k,\z),\z) = \sum_{|\lambd|\geq p} e_\lambd(\k) \z^\lambd \,.
$$
There thus exists a positive real number $\cd(\delta_1,\delta_2,q)$ such that 
\defconstant{gamma}
\begin{equation}
\label{eq:majorationcoeffE}
|e_\lambd(\k)| \leq  \cd(\delta_1,\delta_2,q)\cb^{|\k|}\cc^{|\lambd|} \,.
\end{equation}
Condition (B) ensures that $\Vert T_\k\balpha\Vert \leq e^{-\cgamma \rho^{|\k|}}$. 
 It thus follows that \defconstant{e} \defconstant{f} \defconstant{t}
\begin{eqnarray}
\nonumber |E(\btau_\k,T_\k\balpha)| & = & |E(\bpsi(\k,T_\k\balpha),T_\k\balpha)|
\nonumber  \\ & = & | \sum_{|\lambd|\geq p} e_\lambd(\k) (T_\k\balpha)^\lambd |
\nonumber \\ & \leq & \sum_{|\lambd|\geq p} |e_\lambd(\k)| \Vert T_\k\balpha\Vert^\lambd 
\nonumber \\ & \leq & \sum_{|\lambd|\geq p} \cd(\delta_1,\delta_2,q)\cb^{|\k|}\cc^{|\lambd|}e^{-|\lambd|\cgamma\rho^{|\k|}} \,,
\end{eqnarray}
for every $\k \in \K'$.
Finally, we get that there exist two positive real numbers $\ce$ and $\cf$ such that 
$$
|E(\btau_\k,T_\k\balpha)|  \leq \ce(\delta_1,\delta_2,q)\cb^{|\k|}e^{-\cf\delta_1^{1/N}\delta_2 \rho^{|\k|}}\, ,
$$
for all $\k \in \K'$. There thus exists a positive real number $\ct$ such that, 
\begin{equation}
\label{eq:majoE}
|E(\btau_\k,T_\k\balpha)|  \leq e^{-\ct\delta_1^{1/N}\delta_2 \rho^{|\k|}}\, ,
\end{equation}
for all $\k \in \K'$, large enough with respect to $\delta_1,\delta_2$, and $q$.


\subsubsection{Lower bound for $|E(\btau_\k,T_\k\balpha)|$}\label{subsub:min}

Let us recall that, following \eqref{eq: annulationFitere},  
we have 
\defconstant{g}\defconstant{h}\defconstant{i}
$$
F(\btau_\k,T_\k\balpha)=0 \, ,
$$
for every $\k\in \mathbb N^r$.  
The power series $F(\bpsi(\k,\z),\z)-F_{q}(\bpsi(\k,\z),\z)$ has valuation at least $q$ in $\z$. 
Reasoning as in Section \ref{subsec: upper}, we can find three positive real numbers $\cg(\delta_1,\delta_2)$, $\ch$, and $\ci$ 
such that 
\begin{eqnarray*}
|P_j(\btau_\k,T_\k\balpha)F_{q}(\btau_\k,T_\k\balpha)^j| 
&= &|P_j(\btau_\k,T_\k\balpha)(F-F_{q})^j(\btau_\k,T_\k\balpha)| \\ 
& \leq &\cg(\delta_1,\delta_2,q)\ch^{|\k|}e^{-\ci  \rho^{|\k|}q} \,.
\end{eqnarray*}
We thus have 
\defconstant{k} \defconstant{l}
$$
\left|\sum_{j=1}^{\delta_1} P_j(\btau_\k,T_\k\balpha)F_{q}(\btau_\k,T_\k\balpha)^j\right| 
\leq \ck(\delta_1,\delta_2,q)\cl^{|\k|}e^{-\ci  \rho^{|\k|}q}\, ,
$$
where $\ck(\delta_1,\delta_2,q)$ and $\cl$ does not depend on $\k$. 

On the other hand, for all $\k \in \K'$, we know that  
\defconstant{delta} \defconstant{dAj} \defconstant{Aj} \defconstant{dAjkj}  \defconstant{rho} 
\defconstant{gamm} \defconstant{n} 
$$
P_0(\btau_\k,T_\k\balpha) \not=0 \, .
$$
Furthermore, the algebraic numbers $P_0(\btau_\k,T_\k\balpha)$, with $\k \in \K'$, 
all belong to a fixed number field. We infer from the Liouville inequality that there exists $\cdelta>0$, 
independent of $\k$, such that 
$$
|P_0(\btau_\k,T_\k\balpha)| \geq  H(P_0(\btau_\k,T_\k\balpha))^{-\cdelta}\, .
$$
The complex numbers $\tau_j(\k)$ are polynomials of degree $d$ in the coefficients of the matrix $A_{\k}(\balpha)$. 
The coefficients of the matrices $A_i(\z)$, $1 \leq i \leq r$, are rational functions whose numerators and denominators 
have degrees less than, say, $\cdAj$, and coefficients of logarithmic heigth less than, say, $\cAj$. 
Then Condition (A) ensures that the numerators and the denominators of the rational functions composing the matrix 
$A_{\k}(\z)$ have degrees less than
$$
\cdAj\cdAjkj \rho^{|\k|}\, ,
$$
and coefficients of logarithmic height less than 
$$ 
\cAj \log N |\k| \leq \crho \rho^{|\k|}\, ,
$$
where $\crho$ is a positive real number. Let $\cgamm$ be a real positive number such that $\cgamm \geq \crho + \cdAj \cdAjkj \log H(\balpha)$, we have
$$
\log H(A_{\k}(\balpha)) \leq  \cgamm \rho^{|\k|}\, .
$$
There thus exists a positive real number $\cn$ such that 
\defconstant{q} \defconstant{o} \defconstant{s} \defconstant{p} 
$$
\log H(\tau_j(\k)) \leq \cn \rho^{|\k|}\,.
$$
The polynomial $P_0$ has degree at most $2\delta_1$ in $\bt$ and at most $\delta_2$ in $\z$. 
Since $\delta_2 \geq \delta_1$, we can bound the height of $P_0(\btau_\k,T_\k\balpha)$ by 
$$
\log H(P_0(\btau_\k,T_\k\balpha)) \leq \cq(q) + \co \delta_2\rho^{|\k|} \,.
$$
This gives 
$$
|P_0(\btau_\k,T_\k\balpha)| \geq  \cs(q)e^{-\cdelta\co\delta_2\rho^{|\k|}}\, .
$$
We thus get the following lower bound: 
\begin{eqnarray*}
|E(\btau_\k,T_\k\balpha)| &\geq& 
|P_0(\btau_\k,T_\k\balpha)| - |\sum_{i=1}^{\delta_1} P_i(\btau_\k,T_\k\balpha)F_{q}(\btau_\k,T_\k\balpha)| 
\\ 
&\geq & \cs(q)e^{-\cdelta\co\delta_2\rho^{|\k|}}-\ck(\delta_1,\delta_2,q)\cl^{|\k|}e^{-\ci \rho^{|\k|}q}\,.
\end{eqnarray*}
Choosing $\k$ large enough with respect to $q$, and $q$ large enough with respect to $\delta_2$, we obtain that 
\begin{equation}
\label{eq:minorationE}
|E(\btau_\k,T_\k\balpha)| \geq e^{-\cp\delta_2\rho^{|\k|}},
\end{equation}
for every $\k \in \K'$, large enough with respect to $\delta_2$.


\subsubsection{Contradiction}\label{contradiction}
We infer from Inequalities \eqref{eq:majoE} and \eqref{eq:minorationE} that 
$$
e^{-\cp\delta_2\rho^{|\k|}} \leq \vert E(\btau_\k,T_\k\balpha)\vert  
\leq e^{-\ct\delta_1^{1/N}\delta_2 \rho^{|\k|}}\, ,\,
$$
for all $\k \in \K'$, large enough with respect to $\delta_2$. 
Taking the logarithm, dividing by $\delta_2\rho^{|\k|}$, and letting $|\k|$ tend to infinity along $\K'$, 
we obtain that 
$$
\cp \geq \ct\delta_1^{1/N}\, .
$$
This provides a contradiction as soon as $\delta_1$ is large enough. 
This ends the proof of Lemma \ref{lem:nulliteF}.
\subsection{End of the proof of Theorem \ref{thm: families} in the case of an analytic gauge transforms}

Let $\k_0$ be given by Lemma \ref{lem:nulliteF}. 
For every positive integer $q$, we recall that we have the following lower bound: 
\begin{equation}\label{eq:minorationvaluationFbis}
\nu_{i_0}(F_q(\bchi(\k_0),\z)) \geq q\,.
\end{equation}
For every integer $i$, $1\leq i\leq s$, we write 
\begin{equation}\label{decompostionxi}
\chi_i(\k_0) = \sum_{\kapp,\lambd,j} s_{i,\kapp,\lambd,j}Y_1^{\kappa_1}\cdots Y_\ell^{\kappa_\ell}\z^\lambd \U^j \in \A, 
\end{equation}
where the indices $\kapp=(\kappa_1,\ldots,\kappa_\ell) \in \N^\ell$ and $\lambd=(\lambda_1,\ldots,\lambda_N)\in \N^N$ 
belong to some finite sets, and where 
$0 \leq j < d_0$. 
For all summations in the rest of this section, we let 
$\kapp$ denote some element of $\N^\ell$, $\lambd$, $\bomega$, and $\gamm$ some elements of $\N^N$, 
$i$ an element of $\{1,\ldots,s\}$, and $j$ an element of $\{0,\ldots,d_0-1\}$. 
For every triple $(i,\kapp,j)$, we write 
$$
\chi_{i,\kapp,j}(\z) = \sum_{\lambd}s_{i,\kapp,\lambd,j}\z^\lambd \in \C[\z]\, .
$$
We also set $\bchi_{\kapp,j} = (\chi_{i,\kapp,j})_{i \leq s}$. 
Then we have the following decomposition: 
\begin{equation}
\label{eq:decompositionF}
F_q(\bchi(\k_0),\z)=\sum_{\kapp,j} Y_1^{\kappa_1}\cdots Y_\ell^{\kappa_\ell}\U^j F_q(\bchi_{\kapp,j}(\z),\z) \,.
\end{equation}
Then it is possible to evaluate $F(\bt,\z)$ at the points  $\bt=\bchi_{\kapp,j}(\z)$ in $\C\{\z\}$.

\begin{lem}
\label{lem:annulationsoussériesF}
For every  pair $(\kapp,j)$, we have 
$$
F(\bchi_{\kapp,j}(\z),\z)=0 \in \C\{\z\}\,.
$$
\end{lem}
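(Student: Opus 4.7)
The plan is to pass to a completion where \eqref{eq:minorationvaluationFbis} forces the full series $F(\bchi(\k_0),\z)$ to vanish, and then to extract the vanishing of each coefficient via the free-module structure of $\A$.

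\smallskip

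First, I would consider the $\p_{i_0}$-adic completion $\widehat\A$ of the localization $\A_{\p_{i_0}}$; it is a complete Noetherian local ring whose maximal ideal $\mathfrak{m}$ is the image of $\p_{i_0}$. Since $(\z)\subseteq\p_{i_0}$, each coordinate $z_{i,j}$ lies in $\mathfrak{m}$, so the formal series
\[F(\bchi(\k_0),\z) \;:=\; \sum_{\lambd} l_\lambd(\bchi(\k_0))\,\z^\lambd\]
converges in $\widehat\A$ (its degree-$\geq q$ tail lies in $\mathfrak{m}^q$), and its partial sums $F_q(\bchi(\k_0),\z)$ tend to it. By \eqref{eq:minorationvaluationFbis}, we have $F_q(\bchi(\k_0),\z)\in\p_{i_0}^q\widehat\A=\mathfrak{m}^q$, so Krull's intersection theorem ($\bigcap_q\mathfrak{m}^q=0$) yields
\[F(\bchi(\k_0),\z)=0 \quad\text{in }\widehat\A.\]

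\smallskip

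Substituting the decomposition \eqref{decompostionxi} of each $\chi_i(\k_0)$ into $F(\bt,\z)=\sum_i t_i\,\f(\z)^{\bmu_i}$ and using linearity of $F$ in $\bt$, I obtain in $\widehat\A$:
\[0 \;=\; F(\bchi(\k_0),\z) \;=\; \sum_{\kapp,j} Y^{\kapp}\,\U^{j}\,F(\bchi_{\kapp,j}(\z),\z),\]
where the sum is over the fixed finite set of pairs $(\kapp,j)$ with $0\leq j<d_0$, and each $F(\bchi_{\kapp,j}(\z),\z)=\sum_i\chi_{i,\kapp,j}(\z)\,\f(\z)^{\bmu_i}$ belongs to $\C\{\z\}$. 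To deduce that each coefficient vanishes, I would apply the continuous extension of the inverse ring isomorphism $\sigma^{-1}\colon\A\to\C[\z,\phi_1(\z),\ldots,\phi_\ell(\z),\varphi(\z)]\subset\C\{\z\}$, which sends $Y_i\mapsto\phi_i(\z)$ and $\U\mapsto\varphi(\z)$. The vanishing in $\widehat\A$ then transports to
\[0 \;=\; \sum_{\kapp,j} \phi(\z)^{\kapp}\,\varphi(\z)^{j}\,F(\bchi_{\kapp,j}(\z),\z) \quad\text{in }\C\{\z\}.\]
The algebraic independence of $\phi_1(\z),\ldots,\phi_\ell(\z)$ over $\C(\z)$ together with the fact that $\varphi(\z)$ has degree $d_0$ over $\C(\z,\phi_1,\ldots,\phi_\ell)$ yields the $\C(\z)$-linear independence of the family $\{\phi^{\kapp}\varphi^{j}:\kapp\in\N^\ell,\,0\leq j<d_0\}$, which—combined with the fact that each coefficient $F(\bchi_{\kapp,j}(\z),\z)$ is an analytic combination of polynomials in $\z$ and the functions $\f(\z)^{\bmu_i}$—forces each of these coefficients to vanish identically.

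\smallskip

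The main obstacle is the final extraction step: transferring vanishing from the $\p_{i_0}$-adic completion $\widehat\A$, where Krull's theorem gives us the starting point, back to the analytic ring $\C\{\z\}$ in a manner compatible with the free basis $\{Y^{\kapp}\U^{j}\}$. This requires carefully handling the continuous extension of $\sigma^{-1}$ together with the algebraic independence properties built into the choice of $\phi_1,\ldots,\phi_\ell,\varphi$, ensuring that the decomposition is preserved under the passage between completions.
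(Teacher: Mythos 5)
Your first step --- packaging the bound \eqref{eq:minorationvaluationFbis} as the vanishing of the $\p_{i_0}$-adic limit of the partial sums $F_q(\bchi(\k_0),\z)$ --- is sound, and is essentially the paper's observation that $F_q(\bchi(\k_0),\z)\to 0$ for the quasi-norm $\vert a\vert_0=2^{-\nu_{i_0}(a)}$. The gap is in the extraction step, which you correctly flag as the main obstacle, but the route you propose to close it fails. First and most seriously: the family $\{\phi(\z)^{\kapp}\varphi(\z)^{j}\}_{j<d_0}$ is linearly independent over $\C(\z)$, but the coefficients $F(\bchi_{\kapp,j}(\z),\z)=\sum_i\chi_{i,\kapp,j}(\z)\f(\z)^{\bmu_i}$ that you need to separate live in $\C\{\z\}$, a ring that contains $\phi_1(\z),\ldots,\phi_\ell(\z)$ and $\varphi(\z)$ themselves; linear independence over $\C(\z)$ gives nothing over $\C\{\z\}$ (already $1$ and $\phi_1(\z)$ are $\C(\z)$-independent yet satisfy $\phi_1(\z)\cdot 1-1\cdot\phi_1(\z)=0$ with analytic coefficients). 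Worse, for a regular singular system the solutions $\f(\z)$ are built from the gauge matrix $\Phi(\z)$, so the monomials $\f(\z)^{\bmu_i}$ entering your coefficients are typically algebraically entangled with the $\phi_i$ and $\varphi$; the independence you would need simply does not hold. This is exactly the obstruction the ring $\A$ is designed to circumvent: the coefficient extraction must happen \emph{before} the specialization $Y_i\mapsto\phi_i(\z)$, $\U\mapsto\varphi(\z)$, in a ring where $\{\z^{\bomega}Y^{\kapp}\U^{j}\}_{j<d_0}$ is a genuine free $\C$-basis. Second, the ``continuous extension of $\sigma^{-1}$'' to the $\p_{i_0}$-adic completion need not exist: $\p_{i_0}$ is one of possibly several minimal primes over $(\z)$, selected by the contradiction argument of Lemma \ref{lem:nulliteF} and not by the geometry of the point $(\boldsymbol 0,\phi_1(\boldsymbol 0),\ldots,\varphi(\boldsymbol 0))$; if that point does not lie on the component $V(\p_{i_0})$, then $\sigma^{-1}(\p_{i_0})\not\subseteq(\z)_{\C\{\z\}}$ and there is no continuity for the topologies you are using.

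For comparison, the paper's extraction never leaves $\A$. For each $q_0$ it forms the truncation $\delta(q_0)\in\A$ of the formal limit (the sum of the stabilized coefficients $g_{\kapp,j,\bomega}$ with $\vert\bomega\vert<q_0$), notes that $F_q(\bchi(\k_0),\z)-\delta(q_0)\in(\z)^{q_0}$ for $q\geq q_0$, and uses that $(\z)^{q_0}$ is a closed subset of $\A$ for the topology of $\vert\cdot\vert_0$ together with $F_q(\bchi(\k_0),\z)\to 0$ to conclude $\delta(q_0)\in(\z)^{q_0}$. The $\C$-linear independence in $\A$ of the monomials $\z^{\bomega}Y^{\kappa_1}_1\cdots Y^{\kappa_\ell}_\ell\U^{j}$ with $j<d_0$ then forces $g_{\kapp,j,\bomega}=0$ for all $\vert\bomega\vert<q_0$, and letting $q_0\to\infty$ gives the lemma. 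If you wish to keep your completion language, the correct target is the $(\z)$-adic completion of $\A$, which is a completed free module over $\C[[\z]]$ on the basis $\{Y^{\kapp}\U^{j}\}$ and where coefficientwise extraction is legitimate; the content of the proof is then precisely to show that an element known to vanish $\p_{i_0}$-adically already vanishes there, which is what the closedness of the ideals $(\z)^{q_0}$ delivers.
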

\begin{proof}
The valuation $\nu_{i_0}$ induces a norm $\vert \cdot \vert_0$ on the $\C$-vector space $\A$, setting 
$$
 \vert a\vert_0 = 2^{-\nu_{i_0}(a)}\, ,
$$
for $a\in\A$. We use of course the natural convention $2^{-\infty}=0$, so that $\vert 0\vert_0=0$. 
The sequence $\left(F_q(\bchi(\k_0),\z)\right)_{q \in \N}$ is convergent with respect to this norm, and tends to $0$, 
since we have 
$$
\left|F_q(\bchi(\k_0),\z)\right|_0 \leq 2^{-q} \, ,
$$
by Lemma \ref{lem:nulliteF}.  
For every positive integer $q$, we write 
\begin{eqnarray*}
F_q(\bchi_{\kapp,j}(\z),\z) & = & \sum_{|\lambd| < q}\sum_i l_{\lambd,i}\, \chi_{i,\kapp,j}(\z)  \z^{\lambd}
\\ & = & \sum_{|\lambd|< q}\sum_i \sum_{\gamm}  l_{\lambd,i} s_{i,\kapp,\gamm,j}  \z^{\lambd+\gamm}
\\ & := & \sum_{\kapp} g_{\kapp,j,\bomega,q} \z^\kapp\, ,
\end{eqnarray*}
where for every $(\kapp,j,\bomega,q)$,
$$
g_{\kapp,j,\bomega,q} := \sum_{i}\sum_{|\lambd|<q}l_{\lambd,i} s_{i,\kapp,\bomega-\lambd,j} \, .
$$
If $\kapp$, $j$, and $\bomega$ are fixed, the complex number $g_{\kapp,j,\bomega,q}$ is constant for every $q>|\bomega|$. 
Indeed, the only indices $\lambd$ that occur in the sum defining  $g_{\kapp,j,\bomega,q}$ are 
those for which $|\lambd|<|\bomega|$. If $q > |\bomega|$, we thus have
$$
g_{\kapp,j,\bomega,q} := \sum_{i}\sum_{|\lambd|<|\omeg|}l_{\lambd,i} s_{i,\kapp,\bomega-\lambd,j} \, .
$$
For such quadruples $(\kapp,j,\bomega,q)$, we set 
$$
g_{\kapp,j,\bomega}:=g_{\kapp,j,\bomega,q}\, .
$$
Letting $F_q(\bchi_{\kapp,j}(\z),\z)$ converge for the norm associated with $\nu_\z$ in $\C\{\z\}$, 
we obtain that 
\begin{equation}\label{eq:ecritureFchi}
F(\bchi_{\kapp,j}(\z),\z)= \sum_{\bomega} g_{\kapp,j,\bomega} \z^\bomega\, .
\end{equation}
We are going to show that for every $(\kapp,j,\bomega)$, and every $j$, we have $g_{\kapp,j,\bomega}=0$, 
which will end the proof of the lemma. 
Equality \eqref{eq:decompositionF} can be rewritten as 
\begin{equation}
\label{eq:decompositionF2}
F_q(\bchi(\k_0),\z)=\sum_{\kapp,j,\bomega} g_{\kapp,j,\bomega,q}Y_1^{\kappa_1}\cdots Y_\ell^{\kappa_\ell}\U^j\z^\bomega \, .
\end{equation}
Let us fix a positive integer $q_0$, and set 
\begin{equation}\label{eq:deltas0}
\delta(q_0):=\sum_{|\bomega|<q_0}\sum_{\kapp,j} g_{\kapp,j,\bomega}Y_1^{\kappa_1}\cdots Y_\ell^{\kappa_\ell}\U^j\z^\bomega\, \in \A\, .
\end{equation}
For every $q \geq q_0$, we also set 
$$
\epsilon(q,q_0):=\sum_{|\bomega|\geq q_0} \sum_{\kapp,j} g_{\kapp,j,\bomega,q}Y_1^{\kappa_1}\cdots Y_\ell^{\kappa_\ell}\U^j\z^\bomega 
 \in (\z)^{q_0}\, ,
$$
so that 
$$
F_q(\bchi(\k_0),\z)= \delta(q_0) + \epsilon(q,q_0)\, .
$$ 
Letting $q$ tend to infinity, we see that $\epsilon(q,q_0) \to \delta(q_0)$ with respect to the norm $|\cdot |_0$. 
But, for every $q$, $\epsilon(q,q_0)$ belongs to  $(\z)^{q_0}$, which is a closed set for the topology induced by $|.|_0$. 
Hence, $\delta(q_0) \in (\z)^{q_0}$. We can thus write 
$$
\delta(q_0)=\sum_{|\bomega|\geq q_0} \sum_{\kapp,j} \delta_{\bomega,\kapp,j} \z^\bomega \Y^\kapp \U^j  \, ,
$$
where $\delta_{\bomega,\kapp,j}$ belongs to $\C$. 
By \eqref{eq:deltas0}, we have 
$$
\sum_{|\bomega|<q_0}\sum_{\kapp,j} g_{\kapp,j,\bomega}Y_1^{\kappa_1}\cdots Y_\ell^{\kappa_\ell}\U^j\z^\bomega 
- \sum_{|\bomega |\geq q_0} \sum_{\kapp,j} \delta_{\bomega,\kapp,j} \z^\bomega \Y^\kapp \U^j = 0\, ,
$$
Since the monomials $\z^\kapp\Y^\btau\U^j$ are linearly independent over $\C$ for $j<d_0$, we get that 
\begin{equation}\label{eq:nullitég}
g_{\kapp,j,\bomega}=0
\end{equation}
as soon as $|\bomega|<q_0$. Letting $q_0$ run along $\N$, 
we obtain that Equality \eqref{eq:nullitég}  holds true for every $(\kapp,j,\bomega)$. 
This ends the proof.  
\end{proof}

We are now ready to conclude the proof of Theorem \ref{thm: families} in the case of an analytic gauge transform. 
By definition of $\chi_i(\k)$, we have 
$$
\chi_i(\k) = Q(\z,Y_1,\ldots,Y_\ell,\U)\sigma(\psi_i(\k,\z)).
$$
For the sake of simplicity, we set $D(\z):=Q(\z,\phi_1(\z),\ldots,\phi_\ell(\z),\varphi(\z)) \in \C\{\z\}$.  
Applying the isomorphism $\sigma^{-1}$ to the previous equality, and using the decomposition of the $\chi_i(\k_0)$, 
we obtain that 
\begin{equation}\label{eq:constructionR}
\begin{array}{rcl}
D(\z)\psi_i(\k_0,\z)&=&\sigma^{-1}(\chi_i(\k_0))
\\ &=& \sigma^{-1}\left(\sum_{\kapp,\j}\chi_{i,\kapp,j} (\z)Y_1^{\kappa_1}\cdots Y_\ell^{\kappa_\ell} U^j\right) \\ 
&=& \sum_{\kapp,j} \chi_{i,\kapp,j} (\z)\phi_1(\z)^{\kappa_1}\cdots \phi_\ell(\z)^{\kappa_\ell}\varphi(\z)^j.
\end{array}
\end{equation}
Set $\bbeta := T_{\k_0}\balpha$. 
By Lemma \ref{lem:nulliteF}, this choice of $\k_0$ ensure that  the power series 
$D$ is well-defined and non-zero at $\bbeta$.
Set 
$$
\eta_{i}(\z) := D(\bbeta)^{-1}\sum_{\kapp,j}\phi_1(\bbeta)^{\kappa_1}\cdots
\phi_\ell(\bbeta)^{\kappa_\ell}\vphi(\bbeta)^j \chi_{i,\kapp,j}(\z) \in \C(\z),
$$
and $\bet(\z)=(\eta_i(\z))_{i\leq s}$. The power series $\eta_{i}(\z)$ are well-defined at $\bbeta$ for every $i$. 
Using the fact that  $F$ is linear in $\bt$, we infer from Lemma \ref{lem:annulationsoussériesF} that 
\begin{eqnarray*}
F(\bet(\z),\z) &=& D(\bbeta)^{-1}\sum_{\kapp,j} \bphi^\kapp(\bbeta)\vphi(\bbeta)^j F(\bchi_{\kapp,j}(\z),\z) \\
&=& 0\, .
\end{eqnarray*}
Considering this equality at $T_{\k_0}\z$, the definition of $F$ implies that 
\begin{eqnarray}
\label{eq:constructionpolynomeannulateur}
\sum_{i \leq s} \eta_i(T_{\k_0}\z)\f(T_{\k_0}\z)^{\bmu_i}
& =& F(\bet(T_{\k_0}\z),T_{\k_0}\z)\\
\nonumber&=& 0\, .
\end{eqnarray}
On the other and, evaluating $\bet$ at $\bbeta$, it follows from \eqref{eq:constructionR} that
\begin{eqnarray}
\label{eq:evaluation}
\bet(\bbeta)&=& \bpsi(\k_0,T_{\k_0}\balpha)\\
\nonumber &=&\btau_{\k_0}\,.
\end{eqnarray}
Let us now recall that 
$$
\f(T_{\k_0}\z)=A_{\k_0}(\z)^{-1}\f(\z)\, .
$$
Replacing  $\f(T_{\k_0}\z)$ in \eqref{eq:constructionpolynomeannulateur}, we find a vector of rational function 
$\widetilde{\bet}(\z)=(\widetilde{\eta_1}(\z),\ldots,\widetilde{\eta_s}(\z))$, such that 
$$
\sum_{i=1}^s \widetilde{\eta}_i(\z)\f(\z)^{\bmu_i}=0 \, .
$$
By \eqref{eq:evaluation} and by construction of the $\btau_\k$, we obtain that 
$$
\widetilde{\bet}(\balpha)=\btau \, .
$$
Then the polynomial $Q \in \C(\z)[\X]$  defined by 
$$
Q(\z,X_{1,1},X_{1,2},\ldots,X_{1,m1},X_{2,1},\ldots, X_{r,m_r}) = 
\sum_{i \leq s} \widetilde{\eta}_i(\z)\X^{\bmu_i} 
$$
satisfied
$$
Q(\z,\f(\z))=0 \qquad \text{ and } \qquad Q(\balpha,\X)=P(\X) \, ,
$$
as desired.

It only remains one easy point to handle. 
We want to construct a polynomial with the same properties but 
that belongs to $\Q[\z,\X]$ and not only in $\C(\z)[\X]$. 
 Let $V\subset \C$ denote the $\Q$-vector space generated by the coefficients of $Q$. 
Then $V$ is finite dimensional.  Let $1,\pi_1,\ldots,\pi_t$ be a basis of $V$ over $\Q$. 
Then we can decompose our polynomial $Q$ as
$$
Q(\z,\X)=Q_0(\z,\X)+ \pi_1Q_1(\z,\X)+\cdots+\pi_t Q_t(\z,\X)\, ,
$$
where the polynomials $\Q_i$, $1 \leq i \leq t$ belong to $\Q(\z)[\X]$. 
The analytic power series $f_{i,j}(\z_i)$ having their coefficients in $\Q$,  the $\Q$-linear independence of the 
$\pi_i$ implies that 
$$
Q_i(\z,\f(\z))=0 \, ,
$$
for every $i$, $0 \leq i \leq t$. On the other hand, the polynomial $P$ having algebraic coefficients, we deduce that  
$$
Q_0(\balpha,\X)=P(\X) \qquad \text{ and } \qquad Q_i(\balpha,\X)=0 \ \text{ for } 1 \leq i \leq d \, .
$$
The coefficients of $Q_0$ are elements of $\Q(\z)$, say $r_1(\z),\ldots,r_v(\z)$. The fact that $Q_0(\balpha,\X)=P(\X)$ ensures that 
these rational functions are all defined at $\balpha$. Let $d(\z)\in\Q[\z]$ denote the product of the denominators of 
the $r_i$'s. Thus $d(\balpha)\not=0$. Then the polynomial 
$$
A(\z,\X):= \frac{d(\z)}{d(\balpha)}Q_0(\z,\X) \in \Q[\z,\X]
$$
has all the desired properties. This ends the proof of Theorem \ref{thm: families} in the case where the matrix 
$\Phi(\z)$ belongs to ${\rm GL}_m(\Q\{\z\})$. \qed

\subsection{Proof of the proof of Theorem \ref{thm: families} in the general case}\label{sec:ramified}

For every integer $i$, $1\leq i\leq r$, we let $\widehat\bK_{\z_i,d}$ denote 
the field of fractions of $\Q\{\z_i^{1/d}\}$, where $\z_i^{1/d}=(z_{i,1}^{1/d},\ldots,z_{i,n_i}^{1/d})$. 
We set
$$
\widehat\bK_{\z_i}:= \cup_{d\geq 1} \widehat\bK_{\z_i,d}\, .
$$
We also recall that $\z=(\z_1,\ldots,\z_r)$ and that, given a positive integer $d$, we let 
$\widehat\bK_d$ denote the field of fractions of $\Q\{\z^{1/d}\}$. We also set 
$$
\widehat\bK= \cup_{d\geq 1} \widehat\bK_d\, .
$$

In this section, we explain how to modify our proof of Theorem \ref{thm: families} in order to extend it to 
the case where the gauge transforms $\Phi_i(\z_i)$ 
are not necessarily analytic but are allowed to belong to 
${\rm GL}_{m_i}(\widehat\bK_{\z_i})$.  

We first show that how to reduce to the case where $\Phi_i(\z_i)\in {\rm GL}_{m_i}(\widehat\bK_{\z_i,1})$. 
Let $\balpha=(\balpha_1,\ldots,\balpha_r)$ be such that 
the family of pairs $(T_i,\balpha_i)$ is admissible in the sense of Definition \ref{def: globaladmissibility} 
and such that every $\balpha_i$ is regular with respect to the Mahler system \eqref{eq:mahler2}. 
By assumption, there exists a positive integer $j$ such that 
$\Phi_i(\z_i^j)$ belongs to ${\rm GL}_{m_i}(\widehat\bK_{\z_i,1})$. 
Let $\balpha':=(\balpha'_1,\ldots,\balpha'_r)$ be such that $(\balpha'_i)^j=\balpha_i$ for every $i$, $1\leq i\leq r$.  
Then the study of the system \eqref{eq: blocks} at $\balpha$ is equivalent to the study of the  
the Mahler system 
\begin{equation*}
\left(\begin{array}{c}  \g_{1}(\z) \\ \vdots \\ \vdots \\ 
\g_{r}(\z)\end{array} \right) = \left(\begin{array}{cccc} 
A_{1}(\z_1^j) & & & \\ 
& \ddots & & \\
&& \ddots & \\
&&& A_{r}(\z_r^j)
\end{array}
\right)
\left(\begin{array}{c}   \g_{1}(T\z) \\ \vdots \\ \vdots \\ 
\g_{r}(T\z) \end{array}\right) \, ,
\end{equation*}
at $\balpha'$, where $\g_i(\z):=\g_i(\z^j)$.  
It is obvious that the points $\balpha'_i$ are regular, and that the 
the family of pairs $(T_i,\balpha'_i)$ is still admissible.  
Furthermore, every matrix $A_i(\z^j)$ is conjugated to a constant matrix trough the matrix  
$\Phi_i(\z^j)\in{\rm GL}_m(\widehat\bK_{\z_i,1})$.  
Without any loss of generality, we can thus assume  
that, for every natural number $i$, $1\leq i\leq r$, there exists a matrix 
$\Phi_i(\z_i)\in{\rm GL}_m(\widehat\bK_{\z_i,1})$ 
such that
$$
\Phi_i(T_i\z_i)A_i(\z_i)\Phi_i^{-1}(\z_i) \in {\rm GL}_{m_i}(\Q)\, .
$$

For every integer $i$, $1 \leq i \leq r$, we let $\Delta_i(\z_i)$ be a non-zero analytic function such that 
the coefficients of both $\Delta_i(\z_i)\Phi_i(\z_i)$ and $\Delta(\z_i)\Phi_i^{-1}(\z_i)$ belong to $\Q\{\z_i\}$.  
We also set $\Delta(\z):=\Delta_1(\z_1)\cdots\Delta_r(\z_r)$. 
We then infer from Condition (C)  that
$$
\K_\Delta:=\{\k \in \K \ | \ \Delta(\z) \text{ is well-defined and non-zero at } T_\k\balpha\}
$$
is an infinite set.

\begin{lem}\label{lem: Condition(C)-bis} 
Condition (C) still holds when replacing the set $\K$ with $\K_\Delta$.
\end{lem}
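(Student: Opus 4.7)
My plan is to reduce the statement to Condition (C) for $\K$ itself by multiplying a candidate $\psi$ by the analytic function $\Delta(\z)$, which encodes precisely the extra vanishing requirement defining $\K_\Delta$. Since each $\Phi_i(\z_i)$ has entries in the fraction field of $\Q\{\z_i\}$, one can choose the denominator-clearing function $\Delta_i(\z_i)$ inside $\Q\{\z_i\}$; consequently $\Delta(\z) = \Delta_1(\z_1)\cdots\Delta_r(\z_r) \in \Q\{\z\}$. This rationality of the coefficients of $\Delta$ is the essential feature to exploit.

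Given a finite-dimensional $\Q$-vector space $L$ and $\psi \in \R_{\Gamma,r}\otimes_\Z L\{\z\}$ such that $(\psi(\k,\z))_{\k \in \K_\Delta}$ is not identically zero, I would set $\tilde\psi(\k,\z) := \Delta(\z)\psi(\k,\z)$. Because the power series coefficients of $\Delta$ belong to $\Q$, the coefficients of $\tilde\psi(\k,\z)$ are $\Q$-linear combinations of coefficients of $\psi(\k,\z)$, hence still lie in $L$; thus $\tilde\psi \in \R_{\Gamma,r}\otimes_\Z L\{\z\}$. Picking $\k_0 \in \K_\Delta$ with $\psi(\k_0,\z) \neq 0$, the fact that $\C\{\z\}$ is an integral domain and $\Delta \neq 0$ forces $\tilde\psi(\k_0,\z) \neq 0$, so the family $(\tilde\psi(\k,\z))_{\k \in \K}$ is not identically zero.

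I would then apply Condition (C) for $\K$ to $\tilde\psi$, obtaining infinitely many $\k \in \K$ with $\tilde\psi(\k,T_\k\balpha) \neq 0$. For all but finitely many $\k \in \K$, Condition (B) guarantees that $T_\k\balpha$ lies inside the polydisk of convergence of $\Delta$, so the evaluation $\Delta(T_\k\balpha)$ is well-defined and
$$
\tilde\psi(\k,T_\k\balpha) = \Delta(T_\k\balpha)\,\psi(\k,T_\k\balpha).
$$
For such $\k$, non-vanishing of the left-hand side forces both $\Delta(T_\k\balpha) \neq 0$ (hence $\k \in \K_\Delta$ by definition) and $\psi(\k,T_\k\balpha) \neq 0$, which produces the desired infinite subset of $\K_\Delta$. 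I do not anticipate a serious obstacle: the only bookkeeping to verify is that $\Delta(\z)$ may be chosen in $\Q\{\z\}$, which in turn ensures that multiplication by $\Delta(\z)$ preserves the finite-dimensional $\Q$-structure of the coefficients required to stay in $\R_{\Gamma,r}\otimes_\Z L\{\z\}$.
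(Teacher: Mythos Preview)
Your proposal is correct and follows essentially the same route as the paper: multiply $\psi$ by $\Delta(\z)\in\Q\{\z\}$, apply Condition~(C) for $\K$ to the product, and observe that non-vanishing of $\Delta(T_\k\balpha)\psi(\k,T_\k\balpha)$ forces both $\k\in\K_\Delta$ and $\psi(\k,T_\k\balpha)\neq 0$. You are in fact somewhat more explicit than the paper in justifying that $\Delta$ can be taken with $\Q$-coefficients (so that $\tilde\psi$ stays in $\R_{\Gamma,r}\otimes_\Z L\{\z\}$), a point the paper uses but does not spell out in the proof itself.
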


\begin{proof}
The function $\Delta(\z)$ belongs to $\Q\{\z\}$, and so is well-defined in a neighborhood of the origin. 
Hence, for all but finitely many $\k \in \K$, $\Delta(\z)$ is well-defined at $T_\k\balpha$. 
So we may suppose that $\Delta(\z)$ is well-defined at $T_\k\balpha$ for every $\k \in \K$. 
Let $L$ be a finite-dimensional $\Q$-vector space and $\psi \in \R_{\Gamma,r} \otimes_\Z L\{\z\}$ 
is such that the family $\left(\psi(\k,\z)\right)_{\k \in \K_\Delta}$ is not identically zero. It follows that 
the family $\left(\Delta(\z)\psi(\k,\z)\right)_{\k \in \K}$ is also 
not identically zero. Then Condition (C) ensures that $\Delta(T_\k\balpha)\psi(\k,T_\k\balpha)\neq 0$ 
for infinitely many $\k \in \K$. 
By definition of $\K_\Delta$, such $\k$ must belong to $\K_\Delta$, which ends the proof.  
\end{proof}

Without any loss of generality, we can thus assume that $\K=\K_\Delta$, that is, $\Delta(T_\k\balpha) \neq 0$ 
for every $\k \in \K$. Lemma 6.1 should then be modified as follow.

\begin{lem}[Lemma 6.1-bis] \label{lem: connect-bis}
For every integer $j$, $1\leq j \leq s$, 
there exists $\psi_j \in \R_{\Gamma,r} \otimes_\Z \widehat\bK_1$ such that
$$
\tau_{j,\k} = \psi_{j}(\k,T_\k\balpha) \, 
$$
and $\Delta(\z)^d\psi_j(\k,\z) \in \C\{\z\}$, for all $\k \in \N^r$. 
Furthermore, there exists a finite dimensional $\Q$-vector space $L_0$ such that 
for all $\k \in \N^r$ and all $j$, $1\leq j \leq s$, the coefficients of the formal power 
series $\Delta(\z)^d\psi_{j}(\k,\z)$ belong to $L_0$.   
\end{lem}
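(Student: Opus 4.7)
The plan is to follow the proof of Lemma \ref{lem: connect} almost verbatim, the only substantive modification being to carry along the denominator $\Delta(\z)$ imposed by the fact that $\Phi(\z)$ is now meromorphic rather than analytic. The starting point is to upgrade Lemma \ref{lem:regularitePhi} to the present setting: we must show that $\Phi(\z)$ is well-defined and non-singular at $\balpha$ and at every $T_\k\balpha$ for $\k\in\K$. Since by Lemma \ref{lem: Condition(C)-bis} we may assume $\K=\K_\Delta$, we have $\Delta(T_\k\balpha)\neq 0$ for all $\k\in\K$; as $\Delta(\z)\Phi(\z)$ and $\Delta(\z)\Phi^{-1}(\z)$ are analytic and $T_\k\balpha\to\boldsymbol 0$, both $\Phi(T_\k\balpha)$ and $\Phi^{-1}(T_\k\balpha)$ are well-defined for $\k\in\K$, so $\Phi$ is non-singular at $T_\k\balpha$. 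To descend to $\balpha$ itself, I would apply the identity \eqref{eq:Phi}, namely $\Phi(\z)=A_\k(\z)\Phi(T_\k\z)B_\k^{-1}$, for some $\k\in\K$ of large enough norm: the right-hand side is then well-defined at $\balpha$ by regularity of $\balpha$ and by the previous step. For invertibility at $\balpha$, the determinant $\det\Phi(\z)$ is a non-zero meromorphic function, so Condition (C) applied to $\Delta(\z)^m\det\Phi(\z)$ (which is a non-zero analytic function) produces infinitely many $\k\in\K$ with $\det\Phi(T_\k\balpha)\neq 0$, and the same identity transfers this to $\balpha$, exactly as in the original proof.

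With this in hand, I would then literally copy the construction of the original lemma: set
\[
B(\k,\z):=\Phi(\balpha)\,B_\k\,\Phi(\z)^{-1},\qquad \psi_l(\k,\z):=\sum_{j=1}^s \tau_j R_{j,l}\bigl(B(\k,\z)\bigr).
\]
Each entry of $B(\k,\z)$ is a sum of products of the form (entry of $\Phi(\balpha)$) $\cdot$ (entry of $B_\k$) $\cdot$ (entry of $\Phi^{-1}(\z)$), and $R_{j,l}$ is a polynomial of degree $d$ in the entries of its argument; since $\Delta(\z)\Phi^{-1}(\z)$ is analytic by the very choice of $\Delta$, multiplying by $\Delta(\z)^d$ kills all denominators and $\Delta(\z)^d\psi_l(\k,\z)$ is an analytic function of $\z$ for each fixed $\k$. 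For $\k\in\K$, the first step guarantees $\Phi(T_\k\balpha)\in\mathrm{GL}_M(\overline{\mathbb Q})$, whence \eqref{eq:matricefondamentalebloc} yields $B(\k,T_\k\balpha)=A_\k(\balpha)$ and therefore $\psi_l(\k,T_\k\balpha)=\tau_{l,\k}$, as required.

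It remains to check that $\psi_j\in \R_{\Gamma,r}\otimes_{\Z}\widehat\bK_1$ and that a single finite-dimensional $L_0$ absorbs all Taylor coefficients; both follow as in the analytic case, the Jordan decomposition of the $B_i$ giving the exponential-polynomial dependence in $\k$, and $L_0$ being taken as the $\Q$-vector space spanned by monomials of degree at most $d$ in the entries of the (constant!) matrix $\Phi(\balpha)$, since each Taylor coefficient in $\z$ of $\Delta(\z)^d\psi_l(\k,\z)$ is a $\C$-linear combination of such monomials with scalar coefficients depending only on $\k$. The only delicate point in the whole proof is the extension of Lemma \ref{lem:regularitePhi} sketched above; once the regularity of $\Phi$ at $\balpha$ and along the orbit $\{T_\k\balpha:\k\in\K\}$ is secured, every remaining computation is formally identical to the analytic case, with $\Delta(\z)^d$ playing the role of a universal denominator.
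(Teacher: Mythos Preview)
Your approach is correct and essentially identical to the paper's: both extend Lemma~\ref{lem:regularitePhi} by using the analyticity of $\Delta\Phi$ and $\Delta\Phi^{-1}$ together with the standing assumption $\K=\K_\Delta$, and then copy the construction of $B(\k,\z)$ and $\psi_l$ verbatim, observing that $\Delta(\z)^d$ clears all denominators because $R_{j,l}$ has degree $d$ in the entries of $\Phi^{-1}(\z)$. One small point to tighten: the statement asks for $\tau_{j,\k}=\psi_j(\k,T_\k\balpha)$ for \emph{all} $\k\in\N^r$, not only $\k\in\K$; as in the original proof of Lemma~\ref{lem:regularitePhi}, once $\Phi$ is well-defined and non-singular at $\balpha$, the identity $\Phi(T_\k\z)=A_\k(\z)^{-1}\Phi(\z)B_\k$ together with the regularity of $\balpha$ propagates this to every $T_\k\balpha$ with $\k\in\N^r$, so your argument extends immediately.
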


\begin{proof}
The proof follows the same steps as the one of Lemma \ref{lem: connect}. 
We first stress that Lemma 6.2 still holds true. Indeed, by assumption, the matrix $\Delta(\z)\Phi(\z)$, 
is well-defined in some neighborhood of the origin. 
It follows from Condition (B) that, for $\k \in \mathcal K$ large enough, 
$\Delta(\z)\Phi(\z)$ is well-defined at $T_\k\balpha$. 
By assumption, $\Delta(T_\k\balpha)$ does not vanish for $\k \in \K$. 
The matrix $\Phi(\z)$ is thus well-defined at $T_{\k}\balpha$, for every large $\k \in \K$. 
The end of the proof of Lemma 6.2 remains unchanged. 

Let $\k\in\N^r$. For every integer $i$, $1 \leq j \leq s$, the meromorphic function $\psi_j(\k,\z)$  is a polynomial 
of degree $d$ in the coefficients of the matrix $\Phi^{-1}(\z)$. It follows that  
$\Delta(\z)^d \psi_j(\k,\z) \in \C\{\z\}$, as wanted. 
The last assertion of Lemma \ref{lem: connect-bis} is proved in the same way as in the proof of Lemma \ref{lem: connect} 
choosing $L_0$ to be the $\Q$-vector space generated by the monomials of degree at most $d$ in the coefficients of the matrix 
$\Phi(\balpha)$. 
\end{proof}

Then the proofs of Lemmas \ref{lem: vanishpol}, \ref{dimensionespace}, \ref{lem:majorationespacepolynome}, and 
\ref{lem:fonctionauxiliaire} remain unchanged with one exception.  
We just have to be careful when, in Lemma \ref{lem: vanishpol},  we prove the implication (i) $\Rightarrow$ (iii) using 
using Condition (C). Let us assume that $P(\bpsi(\k,T_\k\balpha),T_\k\balpha)=0$ for all but finitely many $\k \in \K$. 
Then, $P$ being homogeneous, $P(\Delta(T_\k\balpha)^d\bpsi(\k,T_\k\balpha),T_\k\balpha)=0$ for all but finitely many $\k \in \K$. 
Using Condition (C), we infer that $P(\Delta(\z)^d\bpsi(\k,\z),\z)=0$ for all $\k\in \K$. Eventually, we get that 
$P(\bpsi(\k,\z),\z)=0$ for all $\k \in \K$.

The main change occurs in the proof of Lemma \ref{lem:nulliteF}  
when providing an upper bound for the quantity $|E(\btau_\k,T_\k\balpha)|$, in Section \ref{subsec: upper}. 
In order to obtain such an upper bound, we now have to provide a lower bound for the quantity $|\Delta(T_\k\balpha)|$. 
For this purpose, we use a result of Corvaja and Zannier \cite{CZ05}. 

\begin{proof}[End of the proof of Theorem \ref{thm: families}]
Setting $G:=E^{e_1+\cdots+e_h}$ as in Section \ref{subsec: upper}, 
we recall that
$$
G(\bchi(\k),\z) \in (\z)^{\left\lfloor C_1 \delta_1^{1/N}\delta_2\right\rfloor}
$$
for every $\k \in \K'$. Applying $\sigma^{-1}$ and multiplying $G$ by 
$$
(Q\Delta^d)^{2\delta_1(e_1+\cdots+e_h)}\, , 
$$
we get that 
$$
G(\Delta(z)^d\bpsi(\k,\z),\z) \in (\z)_{\C\{\z\}}^{\left\lfloor C_1 \delta_1^{1/N}\delta_2\right\rfloor}\, ,
$$
for every $\k \in \K'$. 
Then, reasoning as in Section \ref{subsub:min}, gives the following equivalent form of Equality \eqref{eq:majoE}:   
$$
\vert E(\Delta(T_\k\balpha)^d\bpsi(\k,T_\k\balpha),T_\k\balpha)\vert \leq e^{-c_1\delta_1^{1/N}\delta_2 \rho^{|\k|}}\, ,
$$
for every $\k \in \K'$, large enough with respect to $\delta_1$, $\delta_2$, and $q$. 
Since $E$ is a homogeneous polynomial of degree $2\delta_1$ in $\w$, we have that  
\begin{equation}
\label{eq: majo-bis}
\begin{array}{rcl}
\vert E(\btau_\k,T_\k\balpha)\vert &=&\vert E(\Delta(T_\k\balpha)^d\bpsi(\k,T_\k\balpha),T_\k\balpha)\vert 
\times |\Delta(T_\k\balpha)|^{-2\delta_1d}
\\ & \leq & e^{-c_1\delta_1^{1/N}\delta_2 \rho^{|\k|}}\times \vert \Delta(T_\k\balpha)\vert^{-2\delta_1d}\, .
\end{array}
\end{equation}
On the other hand, we infer from \cite[Proposition 3]{CZ05} that there exist two positive real numbers $C_2$ and $c_2$ 
such that 
\begin{equation}\label{eq: CZ-bis}
|\Delta(T_\k\balpha)| \geq C_2||T_\k\balpha||^{-c_2}\, ,
\end{equation}
 for infinitely many $\k \in \K'$. 
Indeed, as shown in section 4, \cite[Proposition 3]{CZ05} can be applied to the family of points $(T_\k\balpha)_{\k \in \K'}$. 
Furthermore, our choice of $\K$ ensures that $\Delta(T_\k\balpha) \neq 0$ for every $\k \in \K$, and thus for every $\k \in\K'$ 
since $\K'\subset \K$. 
Using the fact that $\delta^{1/N}\delta_2 \gg \delta_1$, as $\delta_1$ tends to infinity, and  
combining \eqref{eq: majo-bis} and \eqref{eq: CZ-bis}, 
we eventually get an upper bound of the same kind than in Section \ref{subsec: upper}. 
That is, 
$$
|E(\btau_\k,T_\k\balpha)| \leq e^{-c_3\delta_1^{1/N}\delta_2 \rho^{|\k|}}\, ,
$$
for infinitely many $\k \in \K'$. 
The computation leading to the lower bound remains the same. 
Furthermore, as the lower bound holds for every large $\k \in \K'$, 
the contradiction of Section \ref{contradiction} still holds true. 
The last part of the proof of Theorem \ref{thm: families} remains unchanged, which  
ends the proof of Theorem \ref{thm: families} in the general case.  
\end{proof}


\section{Admissibility conditions for Theorem \ref{thm: families}} \label{sec: admgen}

Conditions (A), (B), and (C) required to apply Theorem \ref{thm: families} look somewhat stronger than the corresponding  
Conditions (a), (b), and (c) occurring in Theorem \ref{thm: permanence}.  In particular, the vanishing theorem 
corresponding to Condition (C) is much more general than the one corresponding to Condition (c). 
We show here that it is enough for each pair $(T_i,\balpha_i)_{1\leq i\leq r}$ to satisfy Conditions (a), (b), (c) to ensure that 
Conditions (A), (B), (C) are satisfied by the family $(T_i,\balpha_i)_{1\leq i\leq r}$ at the point $\balpha=(\balpha_1,\ldots,\balpha_r)$.  
More precisely, the goal of this section is to prove the following result. 

\begin{thm}\label{thm:equivalenceadmissibilite}
Let us assume that $T_1,\ldots,T_r$ are matrices with non-negative integer coefficients such that 
$\log \rho(T_i)/\log \rho(T_j)\not\in\mathbb Q$ for all $i,j$, $i\not=j$. 
Then the   family $(T_i,\balpha_i)_{1\leq i\leq r}$ is admissible at the point $\balpha=(\balpha_1,\ldots,\balpha_r)$ 
in the sense of Definition \ref{def: globaladmissibility} if, and only if, every pair $(T_i,\balpha_i)$ is admissible in the sense of Definition 
\ref{def:admissible}.
\end{thm}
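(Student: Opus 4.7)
The plan is to prove the two implications separately, appealing to Theorem \ref{thm:masser} to replace individual admissibility with its intrinsic characterization: $T_i \in \M$, $T_i^k \balpha_i \to \boldsymbol{0}$, and $\balpha_i$ being $T_i$-independent. For the direction from individual to family admissibility, I would construct $\K$ explicitly by setting $\Theta = (1/\log\rho(T_1), \ldots, 1/\log\rho(T_r))$, choosing integer points $\k_l \in \mathbb{N}^r$ with $\Vert\k_l - l\Theta\Vert = \mathcal{O}(1)$, and taking $\K = \{\k_l : l \in \mathbb{N}\}$, $\rho = e^{1/|\Theta|}$. The asymptotics $|\k_l| \sim l|\Theta|$ and $\rho(T_i)^{k_{l,i}} \sim e^l$ immediately yield Condition (A) from individual (a) applied block-by-block to $T_\k = \mathrm{diag}(T_1^{k_{l,1}}, \ldots, T_r^{k_{l,r}})$, and Condition (B) from individual (b) via $\log\Vert T_\k\balpha\Vert = \max_i \log\Vert T_i^{k_{l,i}}\balpha_i\Vert$.

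The main obstacle is Condition (C), because it concerns $(\Gamma, r)$-exponential polynomials $\psi(\k,\z) = \sum_{(\gamm,\j)} \gamm^{\k}\,\k^{\j}\, a_{\gamm,\j}(\z) \in \R_{\Gamma,r} \otimes_\Z L\{\z\}$, whereas Theorem \ref{thm:lemmedezero} handles only pure analytic series $g \in L\{\z\}$. To bridge this gap, I would argue by contradiction: if $\{l : \psi(\k_l, T_{\k_l}\balpha) = 0\}$ were piecewise syndetic, then passing to the long arithmetic subprogressions furnished by Lemma \ref{lem:syndetique}(iv)—on which the increments $\be = \k_{l+e} - \k_l$ eventually take a constant value (only finitely many are possible since $\k_l - l\Theta$ is bounded)—and exploiting the multiplicative independence of the $\log\rho(T_i)$ to preclude coincidences $\gamm_1^{\be} = \gamm_2^{\be}$ in the spirit of Lemma \ref{lem:admislocglob}, one would eliminate distinct exponential terms one at a time, reducing to the case of a single $\gamm^{\k}\,\k^{\j}$ factor which, after being factored out, reduces to the pure power series case covered by Theorem \ref{thm:lemmedezero}.

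For the converse direction (family admissibility implies individual admissibility), I would specialize to each index $i$: extracting a subsequence of $\K$ along which $k_{l,i} \to \infty$, Condition (B) forces $T_i^{k_{l,i}}\balpha_i \to \boldsymbol{0}$, so by Lemma \ref{lem:U(T)} the full sequence $T_i^k \balpha_i$ converges to $\boldsymbol{0}$ once $T_i \in \M$ is established. Class membership $T_i \in \M$ and the $T_i$-independence of $\balpha_i$ then follow by re-running the arguments of Lemma \ref{lem:ClasseM} applied to the $i$-th block, noting that any test function depending only on $\z_i$ yields a pure power series in $\R_{\Gamma,r} \otimes_\Z L\{\z\}$ on which Condition (C) specializes cleanly, delivering the required non-vanishing for the monomial identities characterizing $T_i$-independence.
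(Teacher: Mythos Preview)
Your treatment of Conditions (A) and (B), and of the converse direction, is essentially the same as the paper's (Lemmas~\ref{lem:convergence} and~\ref{lem:conditionadmissibilitematrice}); the paper makes the converse slightly more explicit by first showing via Condition (C), applied to the polynomial $\prod_j(\z^{\be_i}-\lambda_j)$, that the $i$-th coordinate projection of $\K$ must be infinite, but your sketch is in the same spirit.

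The genuine gap is in your handling of Condition (C) for the forward direction. Two points are missing. First, the hypothesis $\log\rho(T_i)/\log\rho(T_j)\notin\mathbb Q$ is only \emph{pairwise}; for $r\ge 3$ it does \emph{not} imply that the numbers $1/\log\rho(T_1),\ldots,1/\log\rho(T_r)$ are $\mathbb Z$-linearly independent, and this stronger independence is exactly what one needs to ``preclude coincidences'' and kill the polynomial-in-$\k$ terms in the induction. The paper confronts this directly: Lemma~\ref{lem:constructionK} does not take your naive $\K=\{\k_l:l\in\mathbb N\}$, but instead builds $\K=\{\k_l:l\in\mL\}$ for a carefully chosen \emph{piecewise syndetic} $\mL$, with the $\k_l$ forced to lie in the orthogonal complement of the integer relations among the $1/\log\rho(T_i)$; Proposition~\ref{th:lemmedezerosansindependance} then performs a linear change of variables to reduce to $s\le r$ coordinates where the required independence holds, and only then applies the elimination argument (Lemma~\ref{lem:independanceglobalerayonsspectraux}). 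Second, your phrase ``reducing to the case of a single $\gamm^{\k}\k^{\j}$ factor which, after being factored out, reduces to the pure power series case'' glosses over the heart of the matter: the polynomial factor $\k^{\j}$ cannot simply be factored out. The paper's Lemma~\ref{lem:independanceglobalerayonsspectraux} runs a delicate induction on the quantity $\Delta(\psi)$, forming the auxiliary function $\xi_{\be}(\kk,\z)=\psi(\kk+\bbe,T_\be\z)g(\z)-\psi(\kk,\z)g(T_\be\z)$ to lower $\Delta$, invoking Nishioka's Theorem~3.1 to force certain ratios $g_{q,\bmu}/g$ to be constants, and then using the linear independence of the $1/\log\rho(T_i)$ (over the first $s$ indices) to conclude that the top-degree polynomial coefficients vanish. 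Your sketch does not contain this mechanism, and without it the reduction to Theorem~\ref{thm:lemmedezero} does not go through.
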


All along this section, we assume that $T_1,\ldots,T_r$ are matrices with non-negative integer coefficients 
such that 
$\log \rho(T_i)/\log \rho(T_j)\not\in\mathbb Q$ for all $i,j$, $i\not=j$. 
We recall that $\Theta$ is defined in \eqref{eq:Theta} by 
$$\Theta=\left( \frac{1}{\log \rho(T_1)},\ldots,\frac{1}{\log \rho(T_r)}\right) \, .
$$
We first prove two easy Lemmas. 

\begin{lem}\label{lem:convergence}
Let us assume that the matrices $T_1,\ldots,T_r$ belong to $\M$. 
Let  $\balpha=(\balpha_1,\ldots,\balpha_r) \in \C^N$ such that $\balpha_i \in \mathcal U(T_i)$ for every $i$, 
$1 \leq i \leq r$. 
Let $\K \subset \N^r$ be any infinite set that remains at bounded distance of the line generated 
by the vector $\Theta$. 
Then Conditions $(A)$ and $(B)$ are satisfied with this choice of $\K$ and $\rho := e^{1/|\Theta|}$. 
\end{lem}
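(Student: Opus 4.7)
The plan is to reduce Conditions (A) and (B) on the block diagonal matrix $T_\k$ to the per-coordinate estimates of Loxton--van der Poorten for each individual pair $(T_i,\balpha_i)$, using the bounded-distance hypothesis to calibrate the exponents uniformly. Since each $T_i$ belongs to $\M$ and $\balpha_i\in\mathcal U(T_i)$, the estimates already recalled in the proof of Theorem \ref{thm:masser} (from \cite{LvdP77}) provide positive constants $C_i$ and $c_i$ such that $\Vert T_i^k\Vert\leq C_i\rho(T_i)^k$ and $-\log\Vert T_i^k\balpha_i\Vert \geq c_i\rho(T_i)^k$ for every non-negative integer $k$. These are my starting bricks.

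Next, I would exploit the assumption that $\K$ lies within a bounded tube around the line $\mathbb R\Theta$: every $\k\in\K$ can be written $\k=t_\k\Theta+\bu_\k$ with $t_\k\in\mathbb R$ and $\Vert\bu_\k\Vert$ bounded by some $M$ independent of $\k$. Looking at the $i$-th coordinate gives $k_i\log\rho(T_i)=t_\k+\mathcal O(1)$, and summing gives $|\k|=t_\k\,|\Theta|+\mathcal O(1)$, where $|\Theta|=\sum_i 1/\log\rho(T_i)$. Eliminating $t_\k$ between these two relations yields the key calibration
$$
\rho(T_i)^{k_i} \;=\; e^{t_\k+\mathcal O(1)} \;=\; \Theta\bigl(e^{|\k|/|\Theta|}\bigr) \;=\; \Theta\bigl(\rho^{|\k|}\bigr),
$$
valid uniformly in $i$ and in $\k\in\K$, with $\rho=e^{1/|\Theta|}$. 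This is the one substantive computation and the only place where the bounded-distance hypothesis is used.

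Finally, I would assemble the conclusion from the block-diagonal structure of $T_\k=\mathrm{diag}(T_1^{k_1},\ldots,T_r^{k_r})$. The maximum norm is compatible with this block structure, so $\Vert T_\k\Vert=\max_i\Vert T_i^{k_i}\Vert$ and $\Vert T_\k\balpha\Vert=\max_i\Vert T_i^{k_i}\balpha_i\Vert$. Combining the Loxton--van der Poorten bounds with the calibration above gives $\Vert T_\k\Vert=\mathcal O(\rho^{|\k|})$, which is Condition (A), and
$$
-\log\Vert T_\k\balpha\Vert \;=\; \min_{i}\bigl(-\log\Vert T_i^{k_i}\balpha_i\Vert\bigr) \;\geq\; \min_i c_i\,\rho(T_i)^{k_i} \;\geq\; c\,\rho^{|\k|},
$$
for some $c>0$, which is Condition (B).

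I do not expect any serious obstacle: the whole argument is a bookkeeping exercise assembling already-available per-coordinate estimates. The only point deserving mild attention is to check that the implicit constants in the calibration $k_i\log\rho(T_i)=t_\k+\mathcal O(1)$ depend only on the fixed bound $M$ and on the matrices $T_i$, which is immediate since they come from a bounded additive perturbation of the coordinates of $t_\k\Theta$.
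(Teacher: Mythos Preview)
Your proof is correct and follows essentially the same approach as the paper: both reduce to the per-coordinate Loxton--van der Poorten estimates $\Vert T_i^k\Vert=\mathcal O(\rho(T_i)^k)$ and $-\log\Vert T_i^k\balpha_i\Vert\geq c_i\rho(T_i)^k$, then use the bounded-distance hypothesis to calibrate $\rho(T_i)^{k_i}$ against $\rho^{|\k|}$ with $\rho=e^{1/|\Theta|}$. Your write-up is in fact somewhat more explicit than the paper's, which uses the asymptotic $|\k|\sim l(\k)|\Theta|$ where you derive the sharper additive relation $|\k|=t_\k|\Theta|+\mathcal O(1)$, and you spell out the block-diagonal bookkeeping $\Vert T_\k\balpha\Vert=\max_i\Vert T_i^{k_i}\balpha_i\Vert$ that the paper leaves implicit.
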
 

\begin{proof}
The matrix $T_i$ being in the class $\M$, it follows from \cite{LvdP77} that 
\begin{equation}
\label{eq:controleTi}
\Vert T_i^{k}\Vert  = \mathcal O(\rho(T_i)^{k}) \qquad \text{ and } \qquad  -\log \Vert T_i^{k}\balpha_i\Vert  = \mathcal O(\rho(T_i)^{k})  \, ,
\end{equation}
for all non-negative integer $k$. 
Let $\K\subset \N^r$ be a set satisfying the assumption of the Lemma. 
Let $B$ denote an upper bound for the distance of any element of $\K$ to the set $\N.\Theta$. 
For every $\k \in \K$, we choose $l(\k) \in \N$ such that 
$$
\Vert\k - l(\k)\Theta\Vert \leq B \, .
$$
Applying \eqref{eq:controleTi}, we obtain that 
$$
\Vert T_{\k}\Vert  = \mathcal O(e^{l(\k)})  \qquad \text{ and } \qquad \log \Vert T_{\k}\balpha\Vert \leq -c e^{l(\k)} \, .
$$
On the other hand, $|\k| \sim l(\k)|\Theta|$, as $|\k|\to \infty$. It follows that Conditions (A) and (B) 
are satisfied by choosing $\rho := e^{1/|\Theta|}$. 
\end{proof}

Reciprocally, we show that the real number $\rho$ and the set $\K$ have to be chosen of the same form 
as  in Lemma \ref{lem:convergence}.

\begin{lem}\label{lem:conditionadmissibilitematrice}
Let $T_1,\ldots,T_r$ be square matrices with non-negative coefficients. Let us assume that the 
family $(T_i,\balpha_i)_{1\leq i\leq r}$ is admissible at the point $\balpha=(\balpha_1,\ldots,\balpha_r)$ 
in the sense of Definition \ref{def: globaladmissibility}. Then each pair $(T_i,\balpha_i)$ is admissible in the sense of Definition 
\ref{def:admissible}. Furthermore, the elements of $\K$ remains at bounded distance of the line generated by 
$\Theta$ and $\rho = e^{1/\vert\Theta\vert}$. 
\end{lem}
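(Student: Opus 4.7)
The plan is to recover each clause of Definition \ref{def:admissible} for the individual pairs $(T_i,\balpha_i)$ by specialising the family conditions (A), (B), (C), and simultaneously to pin down $\rho$ and the location of $\K$.

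First I would extract two quantitative inequalities. From Condition (A) together with the universal bound $\Vert T_i^{k_i}\Vert \geq \rho(T_i)^{k_i}$ (the operator norm dominates the spectral radius), we obtain
\[
k_i\log\rho(T_i) \leq |\k|\log\rho + O(1), \qquad \k \in \K. \qquad (\star)
\]
Condition (B) gives $-\log\Vert T_i^{k_i}\balpha_i\Vert \geq c\rho^{|\k|}$, which combined with the elementary estimate $-\log\Vert T_i^{k_i}\balpha_i\Vert \leq n_i\Vert T_i^{k_i}\Vert \cdot \max_j|\log|\alpha_{i,j}||$ and the standard bound $\Vert T_i^{k_i}\Vert = O(\rho(T_i)^{k_i}k_i^{d_i})$ for some $d_i\geq 0$ (from the Jordan decomposition) yields
\[
|\k|\log\rho \leq k_i\log\rho(T_i) + O(\log k_i), \qquad \k \in \K. \qquad (\star\star)
\]

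Next I would derive individual non-vanishing. For any nonzero $f \in \Q\{\z_i\}$, viewed as an element of $\Q\{\z\}$ constant in the other groups of variables, Condition (C) applied with $L = \Q$ and $\psi$ constant in $\k$ provides infinitely many $\k \in \K$ with $f(T_i^{k_i}\balpha_i)\neq 0$. Since $(\star)$--$(\star\star)$ show that $|\k|$ is unbounded, the set of $i$-th components of elements of $\K$ is infinite, so this establishes the analogue of Condition (c) for $(T_i,\balpha_i)$ restricted to polynomials with algebraic coefficients. In particular, if $\balpha_i$ failed to be $T_i$-independent, the monomial witness of the failure is algebraic and would give a direct contradiction with (C); hence $\balpha_i$ is $T_i$-independent. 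Similarly, (B) forces $T_i^{k_i}\balpha_i\to 0$ along $\K$.

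The main obstacle is to establish $T_i\in\M$. I would adapt the proof of Lemma \ref{lem:ClasseM}, replacing its single-variable Condition (c) with our multivariate (C). Each pathology in the normal-form analysis --- $T_i$ having $0$ as an eigenvalue, having a root of unity as an eigenvalue, failing normal-form Condition (1) (some top irreducible block has spectral radius $<\rho(T_i)$), or failing Condition (2) (some bottom block has spectral radius $\rho(T_i)$) --- can be translated into a nonzero element of $\R_{\Gamma,r}\otimes L\{\z\}$ that vanishes at $(\k,T_\k\balpha)$ for every $\k\in\K$ (for Condition (2), one uses instead the refinement of $(\star)$ by the extra $k_i$ factor in $\Vert T_i^{k_i}\Vert$ to reach a contradiction via the summation step below), each yielding a contradiction with (C). Once $T_i\in\M$ is established, the dominant Jordan blocks have size one, so $\Vert T_i^{k_i}\Vert \asymp \rho(T_i)^{k_i}$ and, by Lemma \ref{lem:U(T)} (applicable since (B) forces $\balpha_i\in\mathcal U(T_i)$), also $-\log\Vert T_i^{k_i}\balpha_i\Vert \asymp \rho(T_i)^{k_i}$ without polynomial correction. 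This upgrades the $O(\log k_i)$ error in $(\star\star)$ to $O(1)$, giving $|k_i\log\rho(T_i) - |\k|\log\rho| = O(1)$ for every $i$ and every $\k \in \K$.

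Finally, summing over $i = 1,\ldots,r$ and using $\sum_i k_i = |\k|$ produces
\[
|\k|\bigl(1 - \log\rho \cdot |\Theta|\bigr) = O(1).
\]
Since $\K$ is infinite, $|\k|$ is unbounded, forcing $\log\rho\cdot|\Theta| = 1$, i.e.\ $\rho = e^{1/|\Theta|}$. Substituting back gives $k_i = |\k|\,\Theta_i/|\Theta| + O(1)$ for every $i$, so $\Vert\k - (|\k|/|\Theta|)\Theta\Vert = O(1)$, which is the bounded-distance statement. Admissibility of each pair $(T_i,\balpha_i)$ in the sense of Definition \ref{def:admissible} then follows from Theorem \ref{thm:masser}, since we have verified $T_i \in \M$, $\lim_k T_i^k\balpha_i = \boldsymbol 0$, and the $T_i$-independence of $\balpha_i$.
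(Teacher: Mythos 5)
Your architecture is the right one, and the quantitative half of your argument is exactly the paper's: once the two-sided estimate $|k_i\log\rho(T_i)-|\k|\log\rho|=O(1)$ is available for each $i$, dividing by $\log\rho(T_i)$ and summing over $i$ forces $\log\rho=1/|\Theta|$ and then the bounded-distance claim. Your preliminary estimates $(\star)$ and $(\star\star)$, which give $k_i\to\infty$ along $\K$ \emph{before} $T_i\in\M$ is known, are in fact more careful than the paper's own (very terse) proof, which establishes that each coordinate projection of $\K$ is infinite by a small polynomial argument against (C) and then asserts the individual admissibility in one sentence. Reducing to Theorem \ref{thm:masser} and adapting Lemma \ref{lem:ClasseM} is the correct way to fill that sentence in.

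There is, however, a genuine gap in your $T_i$-independence step (and the same gap recurs in the root-of-unity-eigenvalue case of your adaptation of Lemma \ref{lem:ClasseM}). Failure of $T_i$-independence only provides a nonzero $\bmu\in\Z^{n_i}$ and an arithmetic progression $a+b\N$ with $(T_i^k\balpha_i)^{\bmu}=1$ for $k\in a+b\N$. The monomial witness $g(\z_i)=\z_i^{\bmu_+}-\z_i^{\bmu_-}$ therefore vanishes at $T_\k\balpha$ only for those $\k\in\K$ whose $i$-th coordinate lies in $a+b\N$ — a set which may well be finite or empty, since nothing forces the $i$-th coordinates of $\K$ to meet that progression. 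Condition (C) only asserts non-vanishing for \emph{infinitely many} $\k\in\K$, so there is no ``direct contradiction.'' The standard repair is to take the product over a full period: $f(\z_i):=\prod_{j=0}^{b-1}g(T_i^j\z_i)$ is a nonzero element of $\Q[\z_i]$ (nonzero because $T_i$ is nonsingular, the case $\det T_i=0$ having been excluded separately) which vanishes at $T_i^{k}\balpha_i$ for \emph{every} $k\geq a$, hence at $T_\k\balpha$ for all but finitely many $\k\in\K$ since $k_i\to\infty$ along $\K$; this does contradict (C). A minor additional imprecision: the failure of normal-form condition (1) is refuted by Condition (B) (one coordinate of $T_\k\balpha$ then tends to $0$ too slowly), not by a vanishing statement against (C).
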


\begin{proof} 
Let us assume that the family $(T_i,\balpha_i)_{1\leq i\leq r}$ is admissible at the point $\balpha=(\balpha_1,\ldots,\balpha_r)$ 
in the sense of Definition \ref{def: globaladmissibility}. We also consider the corresponding real number $\rho$ and set 
$\K\subset \mathbb N^r$. 
We first observe that the projection of $\K$ on the $i$-th coordinate cannot be a finite subset of $\N$. 
Indeed, otherwise the set $\{(T_\k\balpha)^{\be_i}$, $\k \in \K\}$ would be finite, where we let $\be_i$ denote the $i$-th vector 
of the standard basis. Let $\lambda_1,\ldots,\lambda_t$ be the elements of this finite set. Then the non-zero polynomial  
$$
P(\z)=\prod_{j=1}^t( \z^{\be_1}-\lambda_j)\, ,
$$
would satisfy $P(T_\k\balpha)=0$ for all $\k \in \K$, which would contradict Condition (C). 
Now, the fact that the projection of $\K$ on each coordinate is infinite,  
directly implies that each pair $(T_i,\balpha_i)$ is admissible in the sense of Definition \ref{def:admissible}. 
Set  $\k=(k_1, \ldots, k_r)$. Using on the one hand Conditions (a) and (b) for each $i$ , and on the other hand 
Conditions (A) and (B), we get that 
\begin{equation*}
\label{eq:equivalence}
\log \rho(T_i)k_i \sim \log(\rho) |\k| \, .
\end{equation*}
Dividing by $\log \rho(T_i)$, summing over $i$, and then dividing by $|\k|$, we get that 
$$
\log \rho = \left(\sum_{i=1}^r \frac{1}{\log \rho(T_i)} \right)^{-1}\, .
$$
Setting 
$$
\epsilon_i(\k) := k_i - \frac{\log(\rho)|\k|}{\log \rho(T_i)} \, , 
$$
we obtain $\sum_{i} \epsilon_i(\k) = 0$. Now we infer from (B) and from the fact that $T_i$ belongs to $\M$, 
that there exist two positive real numbers  $c_i$ et $\gamma_i$ such that 
$$
c_i \rho(T_i)^{k_i}  \leq \Vert T_i^{k_i}\Vert  \leq \gamma_i \rho^{|\k|} = \gamma_i \rho(T_i)^{k_i-\epsilon_i(\k)} \, .
$$
It follows that the numbers $\epsilon_i(\k), \k \in \K$ are bounded. In other words, $\K$ remains at bounded 
distance of the line generated by $\Theta$. This ends the proof. 
\end{proof}

In the rest of this section, we let $\rho$ be defined as in Lemma \ref{lem:convergence}. 
In view of Lemma \ref{lem:convergence} and \ref{lem:conditionadmissibilitematrice},   
the proof of Theorem \ref{thm:equivalenceadmissibilite} follows from the following result. 

\begin{prop}\label{th:lemmedezerosansindependance}
Let us assume that $T_1,\ldots,T_r$ are matrices with non-negative integer coefficients such that 
$\log \rho(T_i)/\log \rho(T_j)\not\in\mathbb Q$ for all $i,j$, $i\not=j$, and that every pair $(T_i,\balpha_i)$ 
is admissible in the sense of Definition \ref{def:admissible}.  
There exist an infinite set $\K \subset \N^r$ that remains at bounded distance of the line generated by 
the vector $\Theta$, and such that Condition {\rm (C)} is satisfied. 
\end{prop}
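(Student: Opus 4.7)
We take $\K = \{\k_l : l \in \N\}$, where $\k_l \in \N^r$ is a componentwise nearest-integer approximation to $l\Theta$; then $\Vert \k_l - l\Theta \Vert$ is bounded and $\K$ remains at bounded distance from $\mathbb{R}\Theta$. Since each pair $(T_i, \balpha_i)$ is admissible in the sense of Definition \ref{def:admissible}, Theorem \ref{thm:masser} gives $T_i \in \M$ and $\balpha_i$ is $T_i$-independent. Hence the hypotheses of the vanishing theorem (Theorem \ref{thm:lemmedezero}) are met along the sequence $(\k_l)$. It remains to establish Condition (C).

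To prove (C), we shall in fact establish the slightly stronger conclusion that, for every $\psi \in \R_{\Gamma,r} \otimes_\Z L\{\z\}$ not identically zero as a family on $\K$, the set $\{l \in \N : \psi(\k_l, T_{\k_l}\balpha) = 0\}$ is not piecewise syndetic. Writing $\psi(\k, \z) = \sum_{\gamm \in S} \gamm^\k P_\gamm(\k, \z)$, with $S \subset \Gamma^r$ finite and each $P_\gamm \in L\{\z\}[\k] \setminus \{0\}$, the argument proceeds by induction on $|S|$ together with a secondary induction on the total degree of the $P_\gamm$'s in $\k$. The base case reduces to $\psi(\k, \z) = \gamm^\k g(\z)$ with $g \in L\{\z\} \setminus \{0\}$: since $\gamm^{\k_l} \neq 0$, the vanishing of $\psi(\k_l, T_{\k_l}\balpha)$ amounts to $g(T_{\k_l}\balpha) = 0$, and Theorem \ref{thm:lemmedezero} gives the desired non-piecewise-syndeticity.

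For the inductive step, assume $\mL_0 = \{l : \psi(\k_l, T_{\k_l}\balpha) = 0\}$ is piecewise syndetic with bound $B$. Considering shifts $e \in [1, B+1]$ and the corresponding vectors $\be(l, e) = \k_{l+e} - \k_l$ (each of which takes finitely many values, since $\k_l - l\Theta$ is bounded), a syndetic pigeonhole argument patterned on the proof of Lemma \ref{lem:admislocglob} produces a pair $(e, \be)$ for which $\mL_{e,\be} := \{l \in \mL_0 : l+e \in \mL_0,\ \be(l,e)=\be\}$ is piecewise syndetic. On $\mL_{e,\be}$, the two identities $\psi(\k_l, T_{\k_l}\balpha) = 0$ and $\psi(\k_l + \be, T_\be T_{\k_l}\balpha) = 0$ hold simultaneously; picking a distinguished exponential $\gamm_0 \in S$ and performing a Vandermonde-style elimination between the two equations yields $\tilde\psi(\k_l, T_{\k_l}\balpha) = 0$ on $\mL_{e,\be}$, where $\tilde\psi \in \R_{\Gamma,r} \otimes_\Z L'\{\z\}$ for a possibly enlarged finite-dimensional $L' \supset L$, with strictly fewer distinct exponentials than $\psi$ (or, when $|S| = 1$, a finite-difference variant strictly decreases the degree in $\k$). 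If $\tilde\psi$ remains nontrivial on $\K$, the inductive hypothesis contradicts the piecewise syndeticity of $\mL_{e,\be}$. If $\tilde\psi \equiv 0$, the functional identities among the $P_\gamm$'s, combined with Nishioka's theorem on $T_\be$-invariance (already invoked in the proof of Theorem \ref{thm:lemmedezero}), force proportionality of the $P_\gamm$'s, collapsing $\psi$ to the base-case form.

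The main obstacle is the degenerate case $\tilde\psi \equiv 0$: correctly applying Nishioka's invariance theorem to the ratios $P_\gamm / P_{\gamm_0}$, viewed generically in $\k$ as meromorphic functions of $\z$, is delicate, and once proportionality is obtained, the hypothesis $\log \rho(T_i)/\log \rho(T_j) \notin \Q$ intervenes decisively — in the spirit of Lemma \ref{lem:admislocglob} — to guarantee the $\Q$-linear independence of distinct exponentials $\gamm^{\k_l}$ along our integer sequence, preventing multiplicative resonances between the $\gamma_{i,j}^{k_{l,i}}$'s from producing spurious cancellations. The combinatorial bookkeeping with piecewise syndetic sets via Lemma \ref{lem:syndetique} is routine but must be organized exactly along the lines of the proof of Lemma \ref{lem:admislocglob}.
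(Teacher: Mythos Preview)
Your sketch follows the inductive strategy of the paper's Lemma~\ref{lem:independanceglobalerayonsspectraux}, but there is a genuine gap in the degenerate case $\tilde\psi\equiv 0$. In the paper's argument, after Nishioka's theorem yields the proportionalities $g_{q,\bmu}=\gamma_\bmu g$, one obtains a relation of the form $\sum_{i}e_i\,c_i=0$ holding for infinitely many shift vectors $\bbe=(e_1,\dots,e_s)$ with $\bbe/e\to\pi_s(\Theta)$; passing to the limit produces $\sum_i c_i/\log\rho(T_i)=0$, and to conclude that each $c_i$ vanishes one \emph{must} know that $1/\log\rho(T_1),\dots,1/\log\rho(T_s)$ are linearly independent over~$\Z$. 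The same issue appears at the very end, where the relation $\sum_i\mu_i/\log\rho(T_i)=0$ must force $\bmu=0$. Pairwise irrationality of the ratios $\log\rho(T_i)/\log\rho(T_j)$ is equivalent to this $\Z$-linear independence only when $r=2$; for $r\geq 3$ one may perfectly well have, say, $1/\log\rho(T_1)+1/\log\rho(T_2)=1/\log\rho(T_3)$ with all pairwise ratios irrational. Your appeal ``in the spirit of Lemma~\ref{lem:admislocglob}'' does not bridge this gap: that lemma uses pairwise independence to separate eigenvalues of different $T_i$, which is a different mechanism.

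The paper circumvents this difficulty not by strengthening the inductive argument but by changing the set~$\K$. In Lemma~\ref{lem:constructionK} it selects a piecewise syndetic $\mL\subset\N$ and vectors $\k_l$ lying in the orthogonal complement of a $\Z$-basis $\bmu_1,\dots,\bmu_t$ of the rational relations among the $1/\log\rho(T_i)$; one then projects via an integer matrix $S$ onto the first $s=r-t$ coordinates, where the surviving $1/\log\rho(T_1),\dots,1/\log\rho(T_s)$ \emph{are} $\Z$-linearly independent, and applies Lemma~\ref{lem:independanceglobalerayonsspectraux} to the pushed-forward $\overline\psi\in\R_{\Gamma',s}\otimes_\Z\C\{\z\}$. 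Your naive choice $\k_l=\lfloor l\Theta\rfloor$ does not allow this reduction, and without it the degenerate branch of your induction does not close for $r\geq 3$.
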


Let $s \leq r$ be an integer, we let 
$$
\pi_s : \Z^r \mapsto \Z^s
$$
denote the projection on the first $s$ coordinates. We recall that 
$\R_{\Gamma,s}\otimes_\Z \C\{\z\}$ stands for the algebra form by the $(\Gamma,s)$-multivariate exponential polynomial 
with values in $\C\{\z\}$.

\begin{lem}\label{lem:independanceglobalerayonsspectraux} 
Let $s$ be an integer with $1\leq s\leq r$. Let us assume that  the numbers 
$$
\frac{1}{\log \rho(T_1)},\ldots,\frac{1}{\log \rho(T_s)}
$$
are linearly independent over $\Z$. 
We let $\mL \subset \N$ be a piecewise syndetic set and 
$\K = \{\k_l : \ l \in \mL\}$ be a sequence in $\N^r$ such that 
$$
\k_l=l \Theta + \mathcal O(1) \, .
$$
Let $\psi \in \R_{\Gamma,s}\otimes_\Z \C\{\z\}$ be non-zero (that is, $\psi$ does not identically vanish on $\Z^s$). 
Then the set 
$$
\mL_0 :=\left\{l \in \mL \, | \, \psi(\pi_s(\k_l),T_{\k_l}\balpha)=0\right\} 
$$
is not piecewise syndetic. 
\end{lem}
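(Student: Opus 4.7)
The plan is to proceed by induction on a complexity measure of $\psi$, combined with a shift-and-cancel technique analogous to the inductive step in the proof of Theorem \ref{thm:lemmedezero}. Writing $\psi(\k, \z) = \sum_{i=1}^t \bgamma_i^\k h_i(\k, \z)$ with distinct $\bgamma_i \in \Gamma^s$ and nonzero $h_i \in \C\{\z\}[\k]$, I would induct on the lexicographically ordered pair $(t, \max_i \deg_\k h_i)$. The genuine base case, $t = 1$ and $\deg_\k h_1 = 0$, reduces to the vanishing condition $g(T_{\k_l}\balpha) = 0$ with $g := h_1 \in \C\{\z\}$ nonzero; the conclusion then follows directly from Theorem \ref{thm:lemmedezero} applied to $g$, after choosing an appropriate finite-dimensional $\mathbb Q$-subspace of $\C$ accommodating its coefficients.

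For the inductive step, assume for contradiction that $\mL_0 := \{l \in \mL : \psi(\pi_s(\k_l), T_{\k_l}\balpha) = 0\}$ is piecewise syndetic with bound $B$. I would first invoke Lemma \ref{lem:syndetique}(iii) together with the observation that the differences $\be(l, e) := \k_{l+e} - \k_l$ satisfy $\be(l, e) = e\Theta + \mathcal{O}(1)$ and hence take only finitely many values in $\N^r$ for $e \in [1, B]$, and then apply Lemma \ref{lem:syndetique}(ii) to extract a piecewise syndetic subset $\mL_\be \subseteq \mL_0$ on which $\k_{l+e} - \k_l = \be$ is constant for some fixed $\be \in \N^r$. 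For every $l \in \mL_\be$, both $\psi(\pi_s(\k_l), T_{\k_l}\balpha) = 0$ and $\psi(\pi_s(\k_l) + \pi_s(\be), T_\be T_{\k_l}\balpha) = 0$ then hold simultaneously.

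When $t \geq 2$, I would form the auxiliary exponential polynomial
\[
\psi'(\k, \z) := \bgamma_t^{-\pi_s(\be)}\, h_t(\k, \z)\, \psi(\k + \pi_s(\be), T_\be \z) - h_t(\k + \pi_s(\be), T_\be \z)\, \psi(\k, \z),
\]
which vanishes at $(\pi_s(\k_l), T_{\k_l}\balpha)$ for every $l \in \mL_\be$, and whose $\bgamma_t^\k$-component cancels exactly, so that $\psi'$ involves only $t - 1$ distinct exponentials. When $t = 1$ and $\deg_\k h_1 \geq 1$, I would use instead the difference $\psi'(\k, \z) := \bgamma_1^{-\pi_s(\be)} \psi(\k + \pi_s(\be), T_\be \z) - \psi(\k, \z) = \bgamma_1^\k[h_1(\k + \pi_s(\be), T_\be \z) - h_1(\k, \z)]$, which generically strictly reduces the $\k$-degree of the polynomial factor. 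If $\psi'$ is not identically zero, the induction hypothesis applied to it contradicts piecewise syndeticity of $\mL_\be$.

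The main obstacle is the residual case where the reduced $\psi'$ vanishes identically: in the $t \geq 2$ case this yields the rigid identity $(\bgamma_i/\bgamma_t)^{\pi_s(\be)} h_t(\k, \z) h_i(\k + \pi_s(\be), T_\be \z) = h_t(\k + \pi_s(\be), T_\be \z) h_i(\k, \z)$ for each $i < t$, and in the $t = 1$ case it gives $h_1(\k + \pi_s(\be), T_\be \z) = h_1(\k, \z)$, forcing each leading coefficient $g_\j$ of $h_1$ in $\k$ to satisfy $g_\j(T_\be \z) = g_\j(\z)$ and hence, by Nishioka's functional equation theorem (Theorem 3.1 of \cite{Ni_Liv}, as used in the proof of Theorem \ref{thm:lemmedezero}), to be constant. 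To eliminate these degenerate configurations one must vary $\be$ over the finite pool of admissible shifts and invoke the hypothesis that $1/\log\rho(T_1), \ldots, 1/\log\rho(T_s)$ are $\Z$-linearly independent; this independence is what guarantees that the shift pool is rich enough for the resulting identities to force each $h_i$ to vanish, contradicting $\psi \neq 0$. Carrying this elimination through for several $\be$'s in parallel, while propagating piecewise syndeticity at each step via the stability properties of Lemma \ref{lem:syndetique}, is the technical heart of the proof.
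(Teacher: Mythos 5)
Your overall architecture --- induction on a complexity measure, a shift-and-cancel auxiliary function, Nishioka's functional-equation theorem, and a limit argument exploiting the $\Z$-linear independence of the $1/\log\rho(T_i)$ --- is the right one and matches the paper's strategy in spirit. But there are two genuine gaps. First, your induction measure $(t,\max_i\deg_{\k}h_i)$ breaks down in the case $t=1$, $\deg_{\k}h_1\geq 1$: writing $h_1(\k,\z)=\sum_{|\bmu|\le\delta}\k^{\bmu}g_{\bmu}(\z)$, the top-degree part of $h_1(\k+\pi_s(\be),T_{\be}\z)-h_1(\k,\z)$ is $\sum_{|\bmu|=\delta}\k^{\bmu}\bigl(g_{\bmu}(T_{\be}\z)-g_{\bmu}(\z)\bigr)$, which is in general nonzero --- the finite-difference intuition fails because $\z$ is transformed simultaneously with $\k$. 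So your $\psi'$ may be nonzero with the same $t$ and the same $\k$-degree, and the induction hypothesis does not apply. The paper circumvents this by using the auxiliary function $\xi_{\be}(\kk,\z)=\psi(\kk+\bbe,T_{\be}\z)g(\z)-\psi(\kk,\z)g(T_{\be}\z)$, where $g$ is a single nonzero top-degree coefficient of the component with trivial exponential, and by inducting on a finer measure $\Delta(\psi)$ that counts coefficient slots; this construction annihilates exactly the slot of $g$ and guarantees a strict decrease in every case.

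Second, the elimination of the degenerate case $\psi'\equiv 0$, which you correctly flag as the technical heart, is not carried out, and the framework you set up cannot support it as stated: you confine the shifts to $e\in[1,B]$, giving a finite pool with bounded $e$, whereas the hypothesis that $1/\log\rho(T_1),\ldots,1/\log\rho(T_s)$ are $\Z$-linearly independent enters only through the limit $\bbe(l,e)/e\to(1/\log\rho(T_1),\ldots,1/\log\rho(T_s))$ as $e\to\infty$. One must therefore produce admissible shifts $\be(l,e)$ with $e$ arbitrarily large (the paper re-runs the syndetic extraction over every window $[e_1,e_1+B]$ with $e_1\to\infty$), convert the rigid identities --- via Nishioka's theorem applied to the ratios $g_{\bmu}/g$, giving $g_{\bmu}=\gamma_{\bmu}g$ --- into linear relations $\sum_{\bmu>\bmu_0}\bbe^{\bmu-\bmu_0}\binom{\bmu}{\bmu_0}\gamma_{\bmu}=0$, and pass to the limit to contradict the independence. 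Note also that the correct endpoint is not that the $h_i$ vanish (they need not): the degenerate case forces all polynomial degrees to be $0$ and then forces two of the exponentials $\bet_i$ to coincide, contradicting the minimality of the representation of $\psi$.
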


The proof of Lemma \ref{lem:independanceglobalerayonsspectraux} follows the same strategy as the one of Lemma 3.3.1 in 
\cite{Ni_Liv}. However, our framework is more general and we also need to consider piecewise syndetic sets. 
This makes our proof of Lemma \ref{lem:independanceglobalerayonsspectraux} more technical.  
We invite the reader to look at the proof of Lemma 3.3.1 in \cite{Ni_Liv}. 
This could make the following arguments more transparent.

\begin{proof}
To reduce the amount of notation, we set $\kk := \pi_s(\k)$ for $\k \in \Z^r$. 
We argue by contradiction, assuming that the set $\mL_0$ is piecewise syndetic, with bound $B$. 
We write 
\begin{equation}\label{serieexp} 
\psi(\kk,\z)=\sum_{i=1}^q\bet_{i}^{\kk} \sum_{|\bmu|\leq \delta_i} \kk^\bmu g_{i,\bmu}(\z)\, , 
\end{equation}
where the $r$-tuples $\bet_i=(\eta_{i,1},\ldots,\eta_{i,s})$ are all distinct, and where $q$ and the numbers $\delta_i$, 
$1 \leq i \leq q$, are minimal. 
The decomposition \eqref{serieexp} is then unique up to permutation of indices (see for instance \cite[Th\'eor\`eme 1]{La89}). 
Now, we define $\Delta(\psi)$ as the cardinal of the set 
$$
\left\{ (i,\bmu), 1 \leq i \leq q, \text{ such that either } |\bmu| < \delta_i , \text{ or } |\bmu|=\delta_i \text{ and } g_{i,\bmu} \neq 0 \right \} \, .
$$
Without any loss of generality, we assume that  $\delta_q \geq \delta_i$ for all $i$, $1 \leq i\leq q$. 
We argue by induction on $\Delta(\psi)$. 
If $\Delta(\psi)=1$, then $\psi(\kk,\z) = \bet^\kk g(\z)$ and a contradiction follows form Theorem \ref{thm:lemmedezero}. 
We now assume that $\Delta(\psi):=\Delta > 1$ and that the conclusion of the Lemma holds true for $\Delta(\psi)<\Delta$. 
Without any loss of generality, we can assume that $\bet_q = (1,\ldots,1)$. 
Let $\bnu$ be a $s$-tuple of non-negative integers such that  $|\bnu|=\delta_q$.  We set $g(z) := g_{q,\bnu}(z) \neq 0$. 
For every $\be\in \mathbb N^r$, we define the map 
$$
\xi_\be(\kk,\z) := \psi(\kk+\bbe,T_{\be}\z)g(\z) -  \psi(\kk,\z)g(T_\be\z) \, ,
$$
where $T_\be$ is defined as in \eqref{eq:matriceTe} by 
\begin{equation*}
T_\be:=\left(\begin{array}{ccc} T_1^{e_1} && \\ & \ddots & \\ && T_r^{e_r} \end{array} \right)
\end{equation*}
Then $\xi_\be(\kk,\z)\in\R_{\Gamma,s}\otimes_\Z \C\{\z\}$
One has 
\begin{equation}
\label{eq:definitionxi}
\xi_\be(\kk,\z) =\sum_{i=1}^{q-1} \left( \bet_{i}^{\kk}  \sum_{|\bmu|\leq \delta_i} \kk^\bmu h_{\be,i,\bmu}(\z)\right) 
+  \sum_{|\bmu|\leq \delta_i,\, \bmu \neq \bnu} \kk^\bmu h_{\be,q,\bmu}(\z)\, .
\end{equation}
By construction, $\Delta(\xi_\be) < \Delta(\psi)$ for all $\be$. We can thus apply our assumption to 
$\xi_\be$. Set 
$$
\mL_\be:=\{ l \in \N : \xi_\be(\kk_l,T_{\k_l}\balpha)=0\} \,.
$$
Given an integer $e_1 \geq B$, we consider the set $\mL_1$ formed by the integers $l \in \mL_0$ 
for which there exists $e$,  $e_1\leq e\leq e_1+ B$ such that $l + e \in \mL_0$. 
We infer from Lemma \ref{lem:syndetique} that $\mL_1$ is piecewise syndetic. For such a pair $(l,e)$, 
we set $\be:=\be(l,e)=\k_{l+e}-\k_l$ and we let $\mathcal E_1$ denote the (finite) set of $r$-tuples $\be$ 
obtained in this way. For $\be \in \mathcal E_1$, one has 
\begin{eqnarray*}
\xi_\be(\kk_l,T_{\k_l}\balpha) &=&  \psi(\kk_{l+e},T_{\k_{l+e}}\balpha)g(\balpha) -  
\psi(\kk_l,T_{\k_l}\balpha)g(T_{\k_{l+e}}\balpha) \\
&=& 0 \,.
\end{eqnarray*}
This implies the following inclusion: 
$$
\mL_1 \subset \bigcup_{\be \in \mathcal E_1} \mL_\be \, .
$$
By Lemma \ref{lem:syndetique}, there thus exists $\be(l,e) \in \mathcal E_1$ such that $\mL_\be$ is piecewise syndetic. 
For such a $\be=\be(l,e)$, our assumption implies that $\xi_\be \equiv 0$. 
Letting $e_1$ run along the integers larger than $B$, we can find infinitely many $r$-tuples $\be=\be(l,e)$ such that 
$\xi_\be \equiv 0$. Let  $\mathcal E_2$ denote the infinite set of such  $r$-tuples. For $\be \in \mathcal E_2$, 
we thus have $h_{\be,i,\bmu}(\z)=0$ for all indices $(i,\bmu)$. If $\bmu$ is a $s$-tuple such that $|\bmu| = \delta_q$, 
we obtain that 
\begin{eqnarray*}
0&=&h_{\be, q, \bmu}(\z)\\
&=& g_{q,\bmu}(T_\be\z)g(\z)  - g_{q,\bmu}(\z)g(T_\be\z) \, .
\end{eqnarray*}
Dividing by $g(T_\be\z)g(\z)$,  we get that 
$$
\frac{g_{q,\bmu}(T_\be\z)}{g(T_\be\z)} = \frac{g_{q,\bmu}(\z)}{g(\z)} \, \cdot
$$
Since the matrix $T_\be$ has not root of unity as eigenvalue, we can apply Theorem 3.1 of \cite{Ni_Liv}.  
It follows that there exists a complex number  $\gamma_\bmu$ such that 
\begin{equation}\label{eq:egaliteg}
g_{q,\bmu}(\z) = \gamma_\bmu g(\z) \, .
\end{equation}
Let us remark that, in particular, $\gamma_\bnu = 1$. 
Let us consider now a $s$-tuple of non-negative integer $\bmu_0$ such that 
$\bnu - \bmu_0 \in \N^s$ and $|\bnu - \bmu_0| = 1$. 
Then we have that 
\begin{eqnarray*}
0&=& h_{\be, q, \bmu_0}(\z) \\
&= &\sum_{\bmu > \bmu_0} \bbe^{\bmu - \bmu_0} \binom{\bmu}{\bmu_0} g_{q,\bmu}(T_\be\z)g(\z) + 
g_{q,\bmu_0}(T_\be\z)g(\z) - g_{q,\bmu_0}(\z)g(T_\be\z)
\\ & = & \left(\sum_{\bmu > \bmu_0} \bbe^{\bmu - \bmu_0} \binom{\bmu}{\bmu_0} \gamma_\bmu \right)g(T_\be\z)g(\z) 
+ g_{q,\bmu_0}(T_\be\z)g(\z) - g_{q,\bmu_0}(\z)g(T_\be\z) \, .
\end{eqnarray*}
Indeed, if $\bmu > \bmu_0$, then one has $|\bmu|=\delta_q$, and  \eqref{eq:egaliteg} gives that 
$g_{q,\bmu}(\z) = \gamma_\bmu g(\z)$. 
Dividing by $g(T_\be\z)g(\z)$, we obtain that 
$$
\frac{g_{q,\bmu_0}(T_\be\z)}{g(T_\be\z)} = 
\frac{g_{q,\bmu_0}(\z)}{g(\z)} - \sum_{\bmu > \bmu_0} \bbe^{\bmu - \bmu_0} \binom{\bmu}{\bmu_0} \gamma_\bmu \, .
$$
By Theorem 3.1 in \cite{Ni_Liv}, this implies that 
\begin{equation}
\label{eq:formelineairezero}
\sum_{\bmu > \bmu_0} \bbe^{\bmu - \bmu_0} \binom{\bmu}{\bmu_0} \gamma_\bmu  = 0 \, .
\end{equation}
But if $\bmu > \bmu_0$, our assumption implies that $\bmu - \bmu_0$ is a vector of the standard basis of $\C^s$. 
Thus for every $i \leq s$, there exists a unique $\bmu:=\bmu(i)$ such that $\bbe^{\bmu(i) - \bmu_0} =e_i$. 
Recall that $\bbe = \overline{\be(l,e)}=\kk_{l+e} - \kk_l$, with $l$ and $l+e$ in $\mL_0$. 
As $\mathcal E_2$ is infinite, there exist some  $\be(l,e)$ in $\mathcal E_2$ with arbitrarily large $e$.  
But 
$$
\lim_{e\to\infty}\frac{\kk_{l+e} - \kk_l}{e} =  \left(\frac{1}{\log \rho(T_1)},\ldots,\frac{1}{\log \rho(T_s)}\right)\, .
$$
Dividing Equality \eqref{eq:formelineairezero} by $e$ and taking the limit as $e$ tends to infinity, 
we obtain that  
$$
\sum_{i=1}^s \frac{1}{\log \rho(T_i)} \binom{\bmu(i)}{\bmu_0} \gamma_{\bmu(i)}  = 0 \,.
$$
Since, by assumption, the numbers $\frac{1}{\log \rho(T_1)},\ldots,\frac{1}{\log \rho(T_s)}$ 
are linearly independent over $\mathbb Z$, 
we get that 
$$
\binom{\bmu}{\bmu_0} \gamma_\bmu = 0 \, ,
$$
for every $\bmu$. 
Choosing $\bmu=\bnu$, we obtain that 
$$
\delta_q = \binom{\bnu}{\bmu_0} = 0 \, ,
$$
since $\gamma_\bnu=1$. 
Since $\delta_q \geq \delta_i$ for every $i$, it follows that $\delta_i=0$, $1 \leq i \leq q$. 
Thus $\psi(\k,\z)$ can be written as 
$$
\psi(\kk,\z)=\sum_{i=1}^q\bet_{i}^{\kk}g_{i}(\z) \, .
$$
For $\be \in \mathcal E_2$, $h_{\be,1,\boldsymbol{0}}(\z)$ can thus be written as 
\begin{eqnarray*}
0&=&h_{\be,1,\boldsymbol{0}}(\z)\\
&=& \bet_1^\bbe g_{1}(T_\be\z)g(\z) - g_{1}(\z)g(T_\be\z) \, .
\end{eqnarray*}
By minimality of $q$, we have that $g_1(\z) \neq 0$. Then Theorem 3.1 of \cite{Ni_Liv} implies that 
$\bet_1^\bbe=1$ for all $\be \in \mathcal E_2$. Taking the logarithm, it follows that for all pairs $(l,e)$ 
such that $\be = \be(l,e) =(e_1,\ldots,e_r) \in \mathcal E_2$, one has 
$$
e_1\log \eta_{1,1}  + \cdots + e_s\log \eta_{1,s}  = 0 \, .
$$
Let us assume that the vector $(\log \eta_{1,1},\ldots,\log \eta_{1,s})$ is non-zero. Since $\bbe$ has non-negative integer coordinates, 
there exists a non-zero vector $\bmu = (\mu_1,\ldots,\mu_s)\in \Z^s$ such that 
$$
e_1\mu_1  + \cdots + e_s\mu_s  = 0 \, .
$$
Dividing this equality by $e$ and letting $e$ tend to infinity, we obtain that 
$$
\frac{\mu_1}{\log \rho(T_1)}+ \cdots + \frac{\mu_s}{\log \rho(T_s)} = 0\, .
$$
This provides a contradiction. Thus $\log \eta_{1,1} =\cdots =\log \eta_{1,s}=0$. 
Since by assumption the group $\Gamma$ is torsion-free, we get that $\bet_1 = (1,\ldots,1) = \bet_q $, which 
contradicts the minimality of $q$. This ends the proof. 
\end{proof}

We are now ready to construct the suitable set $\K = \{\k_l,\ l \in \mL\}$ needed for proving Proposition 
\ref{th:lemmedezerosansindependance}.

\begin{lem}\label{lem:constructionK}
Let $\bmu_1,\ldots,\bmu_t\in\Z^r$ be a basis of the orthogonal complement of $\Theta$ in $\Q^r$. 
Then there exists a piecewise syndetic set $\mL\subset \N$ and a sequence of vectors $(\k_l)_{l\in\mL}\in\Z^r$ 
in the orthogonal complement of the vector space generated by the vectors $\bmu_i$, $1 \leq i \leq t$, and such that 
\begin{equation}\label{eq:equivalencekl}
\k_l =l \Theta + \mathcal O(1) \, .
\end{equation}
\end{lem}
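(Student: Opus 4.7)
The plan is to exploit Kronecker's theorem---or equivalently the minimality of any translation on a compact abelian group---to produce the required syndetic (hence piecewise syndetic) set of integers $l$, together with the approximations $\k_l$.

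I would first introduce the real subspace
$$
V := \bigl\{ \x \in \mathbb R^r : \langle \x, \bmu_i \rangle = 0 \text{ for } 1 \leq i \leq t \bigr\}.
$$
Since the $\bmu_i$ are $\mathbb Q$-linearly independent vectors in $\mathbb Z^r$, this subspace has dimension $r-t$ and is defined over $\mathbb Q$; in particular, $\Lambda := V \cap \mathbb Z^r$ is a lattice of full rank $r-t$ in $V$. The hypothesis that $\bmu_1,\ldots,\bmu_t$ form a $\mathbb Q$-basis of the orthogonal complement of $\Theta$ in $\mathbb Q^r$ gives $\Theta \in V$. I would then fix a $\mathbb Z$-basis $\bv_1,\ldots,\bv_{r-t}$ of $\Lambda$, which is simultaneously an $\mathbb R$-basis of $V$, and expand $\Theta = \sum_{i=1}^{r-t} \lambda_i \bv_i$ with $\lambda_i \in \mathbb R$. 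For any $\k = \sum_i n_i \bv_i \in \Lambda$ one has $\|l\Theta - \k\| \leq C\, \max_i |l\lambda_i - n_i|$ with a constant $C$ depending only on the chosen basis, so the lemma reduces to finding a piecewise syndetic set $\mL \subset \mathbb N$ together with integers $n_{l,i}$ for which $\max_i |l\lambda_i - n_{l,i}|$ is uniformly bounded.

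For this I would consider the image $\thet$ of $(\lambda_1,\ldots,\lambda_{r-t})$ in the torus $\mathbb T^{r-t} := \mathbb R^{r-t}/\mathbb Z^{r-t}$ and set $G := \overline{\{l\thet : l \in \mathbb N\}}$. A standard compactness argument (based on the existence of arbitrarily long returns of the sequence $(l\thet)_{l\geq 0}$ to any neighborhood of $0$) shows that $G$ is a closed subgroup of $\mathbb T^{r-t}$ containing $0$, on which the translation $T : x \mapsto x + \thet$ acts minimally. Given any neighborhood $U$ of $0$ in $G$, minimality gives $G = \bigcup_{n\geq 0} T^{-n}(U)$, and compactness yields an integer $N$ with $G = \bigcup_{n=0}^N T^{-n}(U)$. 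Applied to $x=0$, this shows that every window $\{k,k+1,\ldots,k+N\} \subset \mathbb N$ contains at least one $l$ with $l\thet \in U$, so the set $\mL := \{l \in \mathbb N : l\thet \in U\}$ is syndetic. Taking $U$ to be the image of the open ball of radius $1/2$ about the origin in $\mathbb R^{r-t}$ and, for each $l \in \mL$, letting $n_{l,i}$ be the nearest integer to $l\lambda_i$, the vectors $\k_l := \sum_i n_{l,i} \bv_i$ then lie in $\Lambda \subset V \cap \mathbb Z^r$ and satisfy $\|l\Theta - \k_l\| < C/2 = \mathcal O(1)$. Since $\mL$ is syndetic, it is in particular piecewise syndetic.

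The only delicate point is the passage from ``infinitely many good approximations'' of $l\Theta$ by elements of $\Lambda$ (which is immediate from Dirichlet-type arguments) to a \emph{syndetic} collection of such $l$; this is precisely what topological minimality on the compact orbit-closure $G$ delivers, and the argument requires no irrationality hypothesis on the $\lambda_i$'s beyond those already built into the data.
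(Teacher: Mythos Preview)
Your argument is correct, but the detour through orbit closures and minimality is superfluous: once you have a $\mathbb Z$-basis $\bv_1,\ldots,\bv_{r-t}$ of $\Lambda=V\cap\mathbb Z^r$ and write $\Theta=\sum_i\lambda_i\bv_i$, the rounded vector $\k_l:=\sum_i\lfloor l\lambda_i\rfloor\bv_i$ already satisfies $\|\k_l-l\Theta\|\leq\sum_i\|\bv_i\|$ for \emph{every} $l\in\mathbb N$, so one may simply take $\mL=\mathbb N$. The paper takes a genuinely different route. It never introduces the lattice $\Lambda$; instead it starts from the coordinatewise floor $\k^0_l=(\lfloor l/\log\rho(T_i)\rfloor)_i\in\mathbb Z^r$ (which need not lie in $V$), observes that each inner product $\langle\bmu_j,\k^0_l\rangle$ is a bounded integer-valued function of $l$, invokes Brown's lemma (Lemma~\ref{lem:syndetique}(ii)) to pass to a piecewise syndetic subset on which this inner product equals some constant $c_j$, and then subtracts a fixed $\bnu_j\in\mathbb Z^r$ with $\langle\bmu_j,\bnu_j\rangle=c_j$ to force orthogonality to $\bmu_j$; iterating over $j=1,\ldots,t$ yields the claim. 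Your approach delivers a stronger conclusion (a syndetic set, indeed all of $\mathbb N$) at the price of the lattice-theoretic fact that $V\cap\mathbb Z^r$ has full rank $r-t$ in $V$; the paper's argument avoids this and stays entirely within the combinatorics of piecewise syndetic sets already set up in Section~\ref{sec: vanishing}, at the cost of yielding only piecewise syndeticity.
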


\begin{rem}
In the case where $t=0$, that is, when the numbers 
$$
\frac{1}{\log \rho(T_1)},\ldots,\frac{1}{\log \rho(T_r)}
$$
are linearly independent over $\Z$, the situation is simplified and we could choose 
$$
\K = \left\{ \left(\left \lfloor \frac{l}{\log \rho(T_1)}\right\rfloor,\ldots,
\left\lfloor\frac{l}{\log \rho(T_r)}\right\rfloor\right) : l \in \N \right\}\, .
$$
Of course, this condition is automatically satisfied when $r=2$ for $\log \rho(T_1)$ and 
$\log \rho(T_2)$ are assumed to be multiplicatively independent. 
However, it seems to be a hard Diophantine problem to check it as soon as $r\geq 3$.
\end{rem}

\begin{proof}[Proof of Lemma \ref{lem:constructionK}]
We first define the sequence $(\k^0_l)_{l\in \N}$ by  
$$
\k^0_l := \left(\left \lfloor \frac{l}{\log \rho(T_1)}\right\rfloor,\ldots,\left\lfloor\frac{l}{\log \rho(T_r)}\right\rfloor\right) \, .
$$
Since $\bmu_1$ is orthogonal to $\Theta$, the scalar products 
$
\langle \bmu_1 \, ,\,  \k^0_l \rangle
$
remain bounded when $l$ runs along $\mathbb N$. 
By Property \emph{(ii)} of Lemma \ref{lem:syndetique}, there exists an integer $c_1$ 
such that  the set 
$$
\mL_1=\{l \in \N : \langle \bmu_1 \, , \, \k_l \rangle = c_1 \}
$$
is piecewise syndetic. Let $\bnu_1 \in \Z^r$  be such that 
$\langle \bmu_1 \, , \, \bnu_1 \rangle = c_1 $. Then, for all $l \in \mathcal L_1$, 
the vector 
$$
\k^1_l := \k^0_l - \bnu_1
$$
is orthogonal $\bmu_1$ and satisfies 
$$
\k^1_l =l \Theta + \mathcal O(1)  \, .
$$
Since $\bmu_2$ is orthogonal to $\Theta$, the scalar products 
$$
\langle \bmu_2 \, ,\, \k^1_l \rangle
$$
remain bounded when $l$ runs along $\mL_1$. There thus exists an integer $c_2$ 
such that the set 
$$
\mL_2=\{l \in \mL_1 : \langle \bmu_2 \, , \, \k^1_l \rangle = c_2 \}
$$
is piecewise syndetic. 
Let $\bnu_2 \in \Z^r$ be such that $\langle \bmu_2 \, , \, \bnu_2 \rangle = c_2 $ and 
$\langle \bmu_1 \, , \, \bnu_2 \rangle = 0 $.  
We could for instance choose $\bnu_2=\k^1_{l_0}$ for some $l_0 \in \mL_2$. 
Then, for all $l \in \mathcal L_2$, the vector 
$$
\k^2_l = \k^1_l - \bnu_2
$$
is orthogonal to $\bmu_1$ and $\bmu_2$, and satisfies 
$$
\k^2_l =l \Theta + \mathcal O(1) \, .
$$
Keeping on in this way, we can find a piecewise syndetic set $\mL:=\mL_t$, and a sequence of vectors 
$(\k^t_l)_{l\in \mL}:=(\k_l)_{l\in\mL}$ with the desired property. This ends the proof.
\end{proof}

Let $\mL$, and $\K:=\{\k_l,\ l \in \mL\} \subset \N^r$ be defined as in Lemma \ref{lem:constructionK}.  
We are now ready to end the proof of Proposition \ref{th:lemmedezerosansindependance}.

\begin{proof}[Proof of Proposition \ref{th:lemmedezerosansindependance}]
Set $s = r-t$. Without any loss of generality, we can assume that the numbers 
$$
\frac{1}{\log \rho(T_1)},\ldots,\frac{1}{\log \rho(T_s)},
$$
are linearly independent over $\Z$. Let us consider the matrix 
$$
S:=\left(\begin{array}{ccccccc}
1 & 0 & \cdots & \cdots & \cdots & \cdots & 0
\\ 0 & 1 & \ddots & \cdots & \cdots & \cdots &\vdots 
\\ \vdots & \ddots & \ddots & \ddots & \cdots & \cdots & \vdots 
\\ 0 & \cdots & \cdots & 1 & 0 & \cdots & 0
\\ \hline &&& \bmu_1 &&&
\\ &&&\vdots&&& 
\\ &&& \bmu_t &&&
\end{array}
\right)\, .
$$
By assumption, $S$ is non-singular and has integer coefficients. 
We also observe that our choice of the set $\mL$ ensures that 
$S\k_l = (k_{l,1},\ldots,k_{l,s},0,\ldots,0)$ 
for every $l \in \mL$. 
Let us consider $\psi \in \R_{\Gamma,r}\otimes_\Z \C\{\z\}$. 
We let $E$ denote the map from $\Z^s$ to $\Z^r$ defined by 
$E(k_1,\ldots,k_s)=(k_1,\ldots,k_s,0\ldots,0)$. 
We now define a map $\overline{\psi}$ from $\Z^s$ to $\C\{z\}$ by 
$$
\overline{\psi}((k_1,\ldots,k_s),\z) = \psi(S^{-1}E(k_1,\ldots,k_s),\z) \,.
$$
Note that if $S$ is not invertible in $\mathbb Z$ but only in $\mathbb Q$, then 
the vector $S^{-1}E(k_1,\ldots,k_s)$ may have rational coordinates 
(but with a fixed denominator corresponding to the determinant of $S$).  
Since $\psi\in \R_{\Gamma,r}\otimes_\Z \C\{\z\}$, it can naturally be extended to vectors in $\mathbb Q^r$. 
This shows that $\overline{\psi}$ is well-defined. Furthermore, it is not hard to see that 
$\overline{\psi}\in\R_{\Gamma',s}\otimes_\Z \C\{\z\}$. Indeed, because of the possible occurrence of the determinant 
of $S$ as denominator of the coordinates of the vector $S^{-1}E(k_1,\ldots,k_s)$, we may have to replace $\Gamma$ 
by some $\Gamma'$. 
But $\Gamma'$ is still torsion-free in that case. 
The main point now is that for all $l \in \mL$, $E(\overline{\k_l})=S(\k_l)$ and thus 
\begin{eqnarray}\label{eq:pushbackpsi}
  \overline{\psi}(\kk_l,\z)=\psi(\k_l,\z) \,.
\end{eqnarray}
Now if $\psi(\k_l,T_{\k_l}\balpha)=0$ for all but finitely many $l \in \mL$, then 
\eqref{eq:pushbackpsi} implies that the set 
$$
\mL_0 = \{ l \in \mL \, | \, \overline{\psi}(\kk_l,T_{\k_l}\balpha)=0\}
$$
is piecewise syndetic, since $\mL$ is piecewise syndetic. 
Since $\overline{\psi}\in\R_{\Gamma',s}\otimes_\Z \C\{\z\}$, 
Lemma \ref{lem:independanceglobalerayonsspectraux} thus 
implies that  
$$\overline{\psi}(\k,\z)=0$$ 
for all $\k\in\mathbb Z^s$, and 
in particular $\overline{\psi}(\kk_l,\z)=0$ for all $l \in \mL$.  
By \eqref{eq:pushbackpsi}, we get that $\psi(\k_l,\z)=0$ for all $l \in \mL$, concluding the proof. 
\end{proof}


\section{Proofs of Theorems \ref{thm: permanence} and \ref{thm: purity}}\label{sec: final}

In this final section, we complete the proof of our two main results. 

\subsection{Proof of Theorem  \ref{thm: permanence}} 
Concerning the proof of Theorem  \ref{thm: permanence}, there is nothing  more to do. 
The conclusion directly follows from Theorems 
\ref{thm: families} and \ref{thm:equivalenceadmissibilite}. Indeed, if 
$(T,\balpha)$ is admissible in the sense of Definition \ref{def:admissible}, then by 
Theorem \ref{thm:equivalenceadmissibilite} (with $r=1$) 
it is also admissible in the sense of Definition \ref{def: globaladmissibility}. 
We can then apply Theorem \ref{thm: families} 
(with $r=1$) to obtained the desired conclusion.

\subsection{Proof of Theorem  \ref{thm: purity}} 
We are now going to see how to deduce Theorem  \ref{thm: purity} from Theorem \ref{thm: families}. 

Let $\mathbb L$ be a field and $\mathbb K \subset \mathbb L$ be a subfield of $\mathbb L$. 
Let us consider some finite sets  $\mathcal E_1,\ldots,\mathcal E_r\subset \mathbb L$, 
with $\mathcal E_i:=\{\alpha_{i,1},\ldots,\alpha_{i,m_i}\}$. For every $i$, we consider 
the vector of indeterminates $\X_i:=(X_{i,1},\ldots,X_{i,m_i})$. 
We also set $\mathcal E:=\cup \mathcal E_i$, $\X:=(\X_1,\ldots,\X_r)$,  
and ${\check\X}_i:=(\X_1,\ldots,\X_{i-1},\X_{i},\ldots,\X_r)$. 

The $\mathbb K$-vector space formed by 
the $\mathbb K$-linear relations between the elements of $\mathcal E$ is defined by 
$$
{\rm Lin}_{\mathbb K}(\mathcal E_i) 
:= \left\{L(\X_i)=  a_1X_{i,1}+\cdots +a_{m_i}X_{i,m_i}\in \mathbb K[\X_i] 
: L(\alpha_{i,1},\ldots,\alpha_{i,m_i})=0\right\}\, .
$$
We also set
$$
{\rm Lin}_{\mathbb K}(\mathcal E_i \mid \mathcal E) 
:=  {\rm span}_{\mathbb K[\Check{\X}_i]} \{ L(\X_i) : L\in {\rm Lin}_{\mathbb K}(\mathcal E_i)  \}\, .
$$
Then we let $\mathbb K[\X]_{mul}$ denote the set of polynomials that are multilinear with respect to 
every set of variables $\X_i$. Hence, $P$ belongs to $\mathbb K[\X]_{mul}$ if, for every $1\leq i\leq r$, 
it has a decomposition of the form  
$$
P(\X)=\sum_{k=1}^{m_i} a_k(\Check{\X}_i)X_{i,k} \, ,
$$
where $a_k(\Check{\X}_i)\in \mathbb K[\Check{\X}_i]$. 
We finally define 
$$
{\rm Mul}_{\mathbb K}(\mathcal E_1,\ldots,\mathcal E_r) 
:= \left\{P(\X)\in \mathbb K[\X]_{mul} : P(\alpha_{1,1},\ldots,\alpha_{r,m_r})=0 \right\}\, .
$$

Before proving Theorem \ref{thm: purity}, we prove the following result. 

\begin{prop}\label{prop:pur-lin}
Under the assumptions of Theorem \ref{thm: purity}, we have 
$$
{\rm Mul}_{\Q}(\mathcal E_1,\ldots,\mathcal E_r) \subset \sum_{i=1}^r {\rm Lin}_{\Q}(\E_i \mid \E) \, .
$$
\end{prop}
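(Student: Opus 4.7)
The plan is to apply Theorem~\ref{thm: families} to produce a polynomial lift $Q$ of the multilinear relation $P$, then to exploit the independence of the variable blocks $\z_1, \ldots, \z_r$ to decompose $Q$ into pure pieces, and finally to specialize at $\balpha$.

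First, I would verify that Theorem~\ref{thm: families} applies. By hypotheses (i) and (ii) of Theorem~\ref{thm: purity}, together with Theorem~\ref{thm:equivalenceadmissibilite}, the family $(T_i,\balpha_i)_{1\leq i\leq r}$ is admissible in the sense of Definition~\ref{def: globaladmissibility}. Up to enlarging each $\mathcal{E}_i$ to the full value set $\{f_{i,1}(\balpha_i),\ldots,f_{i,m_i}(\balpha_i)\}$ (an operation that only enlarges the target ideal $\sum_i {\rm Lin}_\Q(\mathcal{E}_i\mid\mathcal{E})$), the given $P \in {\rm Mul}_\Q(\mathcal{E}_1,\ldots,\mathcal{E}_r)$ is homogeneous of degree $d_i = 1$ in each $\X_i$. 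Theorem~\ref{thm: families} then yields a polynomial $Q \in \Q[\z, \X]$, multilinear in each $\X_i$, with $Q(\z,\f(\z)) = 0$ and $Q(\balpha,\X) = P(\X)$.

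Second, I would prove the following algebraic sublemma: any multilinear $Q \in \Q[\z,\X]$ satisfying $Q(\z,\f(\z)) = 0$ lies in the ideal $\sum_{i=1}^r J_i$ of $\Q(\z)[\X]$, where $J_i$ is the $\Q(\z)[\check{\X}_i]$-ideal generated by polynomials $\tilde L(\z_i,\X_i) \in \Q[\z_i,\X_i]$ linear in $\X_i$ with $\tilde L(\z_i,\f_i(\z_i)) = 0$. For each $i$, I would choose a subset $S_i \subseteq \{1,\ldots,m_i\}$ indexing a $\Q(\z_i)$-basis of the $\Q(\z_i)$-span of $\mathcal{F}_i := \{f_{i,1}(\z_i),\ldots,f_{i,m_i}(\z_i)\}$. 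The $\Q(\z_i)$-dependencies for $j \notin S_i$ yield, after clearing denominators, elements of $J_i$. Using these to reduce $Q$ modulo $\sum_i J_i$, one is left with a polynomial $\tilde Q$ whose monomials $\X^\bmu$ satisfy $\mu_i \in S_i$, and $\tilde Q(\z,\f(\z)) = 0$ becomes a $\Q(\z)$-linear relation among the products $\prod_i f_{i,\mu_i}(\z_i)$. The key claim is that such products are $\Q(\z)$-linearly independent in $\Q\{\z\}$, which I would establish by induction on $r$; the inductive step rests on the fact that a $\Q(\z_i)$-linearly independent family in $\Q\{\z_i\}$ remains $\Q(\z)$-linearly independent in $\Q\{\z\}$, proved by expanding putative coefficients as power series in the remaining variables and comparing term-by-term. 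Hence $\tilde Q = 0$ and $Q \in \sum_i J_i$.

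Finally, I would specialize the decomposition at $\z = \balpha$. By Theorem~\ref{thm: permanence} applied to each individual Mahler system, the $\Q(\z_i)$-basis can be chosen so that it specializes to a $\Q$-basis of the span of $\mathcal{E}_i$, allowing the common denominator $d(\z) \in \Q[\z]$ of the decomposition to be taken with $d(\balpha) \neq 0$. Writing $d(\z) Q = \sum_i R_i(\z,\check{\X}_i) \tilde L_i(\z_i,\X_i)$ with $R_i \in \Q[\z,\check{\X}_i]$ and $\tilde L_i \in \Q[\z_i,\X_i]$ linear in $\X_i$ vanishing on $\f_i(\z_i)$, and evaluating at $\balpha$, each $\tilde L_i(\balpha_i,\X_i)$ lies in ${\rm Lin}_\Q(\mathcal{E}_i)$. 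Therefore $d(\balpha) P(\X) \in \sum_i {\rm Lin}_\Q(\mathcal{E}_i\mid\mathcal{E})$, and dividing by the nonzero rational $d(\balpha)$ finishes the argument. The main obstacle is the algebraic sublemma — in particular the $\Q(\z)$-linear independence of product functions — together with the technical matter of arranging $d(\z)$ to be non-vanishing at $\balpha$, which is handled by invoking Theorem~\ref{thm: permanence} for each individual system.
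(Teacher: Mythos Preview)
Your approach is correct and rests on the same two pillars as the paper: the lift via Theorem~\ref{thm: families} and the functional decomposition (your sublemma is exactly Lemma~\ref{lem:pur-fonc}). The paper organises the endgame differently: it first isolates the $\Q$-linearly independent case as Lemma~\ref{lem:independancelineaire}, proving ${\rm Mul}_\Q(\mathcal E_1,\ldots,\mathcal E_r)=\{0\}$ there, and then reduces the general case by the linear substitution $\Lambda:X_{i,j}\mapsto\sum_k\lambda_{i,j,k}X_{i,k}$ onto basis variables, so that $L\in\ker\Lambda$, which is manifestly generated by the pure relations $X_{i,j}-\sum_k\lambda_{i,j,k}X_{i,k}\in{\rm Lin}_\Q(\mathcal E_i)$. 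You instead specialise the $\Q(\z)$-decomposition of $Q$ directly at $\balpha$, invoking Theorem~\ref{thm: permanence} on each individual system to force the reduction relations to be regular at $\balpha_i$ (via the Gaussian-elimination step you sketch). This is a legitimate and slightly more direct variant; both routes must in fact confront the same issue that the pieces $Q_i\in\Q(\z)[\X]$ coming from Lemma~\ref{lem:pur-fonc} are not a priori defined at $\balpha$, and your appeal to Theorem~\ref{thm: permanence} makes this explicit where the paper's $\Lambda$-argument leaves it implicit.

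One point needs repair: your justification for enlarging each $\mathcal E_i$ to the full value set (``an operation that only enlarges the target ideal'') is backwards --- proving that $P$ lies in a larger ideal is a weaker conclusion, not a stronger one. What you actually need is the contraction identity $\bigl(\sum_i {\rm Lin}_\Q(\mathcal E_i'\mid\mathcal E')\bigr)\cap\Q[\X]=\sum_i {\rm Lin}_\Q(\mathcal E_i\mid\mathcal E)$. This does hold, since both sides are prime ideals generated by linear forms and the projection $\Q^{m_i}\to\Q^{s_i}$ carries the zero locus $W_i'$ of ${\rm Lin}_\Q(\mathcal E_i')$ onto that of ${\rm Lin}_\Q(\mathcal E_i)$; but it requires a line of argument you did not supply.
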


In order to prove Proposition \ref{prop:pur-lin}, we need the following two lemmas. 

\begin{lem}\label{lem:pur-fonc}
Under the assumptions of Theorem \ref{thm: purity}, we have 
$$
{\rm Mul}_{\Q(\z)}\left(\f_1(\z_1),\ldots,\f_r(\z_r)\right) 
\subset \sum_{i=1}^r {\rm Lin}_ {\Q(\z)}(\f_i(\z_i)\ | \ \f(\z)) \,.
$$
\end{lem}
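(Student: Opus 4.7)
The plan is to observe that although this lemma is stated under the assumptions of Theorem \ref{thm: purity}, its proof is essentially formal and uses only that the functions $\f_i(\z_i)$ depend on pairwise disjoint sets of variables; the Mahler structure plays no role here.

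I would begin by choosing, for each $i$, a subset $B_i \subset \{1, \ldots, m_i\}$ such that $\{f_{i,k}(\z_i) : k \in B_i\}$ is a maximal $\Q(\z_i)$-linearly independent subfamily of $\f_i(\z_i)$. For each $j \notin B_i$ this produces a pure linear relation
$$L_{i,j}(\X_i) := X_{i,j} - \sum_{k \in B_i} a_{i,j,k}(\z_i)\, X_{i,k} \in {\rm Lin}_{\Q(\z)}(\f_i(\z_i)),$$
with $a_{i,j,k} \in \Q(\z_i)$. Given any $P \in {\rm Mul}_{\Q(\z)}(\f_1,\ldots,\f_r)$, I would multiply the $L_{i,j}$ by suitable monomials in $\Q(\z)[\check\X_i]$ and subtract iteratively from $P$ to kill every monomial involving some $X_{i,j_i}$ with $j_i \notin B_i$. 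This reduces $P$ modulo $\sum_i {\rm Lin}_{\Q(\z)}(\f_i \mid \f)$ to a multilinear polynomial
$$\tilde P(\X) = \sum_{\j \in \prod_i B_i} c_\j(\z) \prod_i X_{i, j_i}, \qquad c_\j \in \Q(\z),$$
still satisfying $\tilde P(\f(\z)) = 0$.

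The main obstacle is then to show $\tilde P = 0$, which amounts to the \emph{key claim}: given $\Q(\z_i)$-linearly independent families $\{f_{i,k}(\z_i) : k \in B_i\} \subset \Q\{\z_i\}$ for $i=1, \ldots, r$, the products $\prod_i f_{i, j_i}(\z_i)$ for $\j \in \prod_i B_i$ are $\Q(\z)$-linearly independent in $\Q[[\z]]$. I would prove this by induction on $r$, the base case $r=1$ being immediate. For the inductive step, starting from $\sum_\j c_\j(\z) \prod_i f_{i, j_i}(\z_i) = 0$, I clear denominators to get $\sum p_\j(\z) \prod_i f_{i, j_i}(\z_i) = 0$ with $p_\j \in \Q[\z]$, and group by $j_r$:
$$\sum_{j_r \in B_r} A_{j_r}(\z)\, f_{r, j_r}(\z_r) = 0, \qquad A_{j_r}(\z) := \sum_{\j'} p_{\j', j_r}(\z) \prod_{i < r} f_{i, j_i}(\z_i).$$
Setting $\z'' := (\z_1, \ldots, \z_{r-1})$, I write $A_{j_r}(\z) = \sum_\bmu A_{j_r}^\bmu(\z_r)\, \z''^\bmu$ (a finite sum, since the $p_\j$ are polynomials) with $A_{j_r}^\bmu(\z_r) \in \Q[\z_r]$; since the monomials $\z''^\bmu$ are $\Q[[\z_r]]$-linearly independent in $\Q[[\z]] = \Q[[\z_r]][[\z'']]$, the relation yields $\sum_{j_r} A_{j_r}^\bmu(\z_r) f_{r, j_r}(\z_r) = 0$ in $\Q[[\z_r]]$ for every $\bmu$, and the $\Q(\z_r)$-linear independence of $\{f_{r,k} : k \in B_r\}$ then forces $A_{j_r}^\bmu = 0$, hence $A_{j_r} \equiv 0$. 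Expanding $p_{\j', j_r}(\z) = \sum_\bnu q_{\j', j_r}^\bnu(\z'')\, \z_r^\bnu$ and using the analogous $\Q[[\z'']]$-linear independence of the $\z_r^\bnu$ in $\Q[[\z'']][[\z_r]]$, the equation $A_{j_r} = 0$ forces $\sum_{\j'} q_{\j', j_r}^\bnu(\z'') \prod_{i<r} f_{i, j_i}(\z_i) = 0$ for each $\bnu$; the inductive hypothesis (applied over $\Q(\z'')$) then gives $q_{\j', j_r}^\bnu = 0$, whence $p_\j = 0$ and $c_\j = 0$, completing the induction.

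Beyond the bookkeeping required to run this double variable-separation argument, the proof presents no serious difficulty; the structural fact exploited twice is the canonical factorization $\Q[[\z]] = \Q[[\z_r]][[\z'']] = \Q[[\z'']][[\z_r]]$, which allows relations across the two blocks of variables to be decoupled. Once the key claim is established, $\tilde P = 0$ and hence $P \in \sum_{i=1}^r {\rm Lin}_{\Q(\z)}(\f_i(\z_i) \mid \f(\z))$, as required.
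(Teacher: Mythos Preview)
Your approach is correct and essentially the same as the paper's, though organized differently: the paper runs the induction on $r$ directly on the multilinear relation $L$ (picking a basis only for $\f_r$, decomposing $L = \sum_j X_{r,j} L_j(\check\X_r)$, and applying the inductive hypothesis to the resulting combinations $L'_k$), whereas you first reduce to a basis for all $\f_i$ at once and then isolate your ``key claim'' as a separate independence statement proved by the same induction. Both arguments rest on the same structural fact, namely that functions in $\z_r$ which are $\Q(\z_r)$-linearly independent remain so over $\Q(\z_r)((\z_1,\ldots,\z_{r-1}))$, justified by separating the disjoint blocks of variables.

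One minor slip: the expansion $A_{j_r}(\z) = \sum_\bmu A_{j_r}^\bmu(\z_r)\, \z''^\bmu$ is \emph{not} a finite sum, since $A_{j_r}$ involves the power series $\prod_{i<r} f_{i,j_i}(\z_i)$ in $\z''$. What is true, and what your argument actually needs, is that each coefficient $A_{j_r}^\bmu(\z_r)$ lies in $\Q[\z_r]$ (because the only $\z_r$-dependence in $A_{j_r}$ comes from the polynomials $p_\j$). With this correction the rest of your argument goes through unchanged.
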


\begin{proof}
We argue by induction on $r$. For $r=1$, there is nothing to do.  Let us now assume that $r>1$ and 
that the result holds for $r-1$. 
Let $s$ denote the rank over $\Q(\z_r)$ of the power series $f_{r,1}(\z_r),\ldots,f_{r,m_r}(\z_r)$. 
Reordering if necessary, we can assume that the functions
$$
f_{r,1}(\z_r),\ldots,f_{r,s}(\z_r)
$$
are linearly independent over $\Q(\z_r)$. 
There thus exist some rational functions $r_{j,k}(\z_r)$, $j>s$, 
$1 \leq k \leq s$ such that for every $j>s$, we have 
\begin{equation}\label{eq:dependancelineairef1}
f_{r,j}(\z_r)=r_{j,1}(\z_r)f_{r,1}(\z_r)+\cdots+r_{j,s}(\z_r)f_{r,s}(\z_r)\, .
\end{equation}
We stress that the functions $f_{r,1}(\z_r),\ldots,f_{r,s}(\z_r)$ are also linearly independent over 
$\Q(\z_r)((\z_1,\ldots,\z_{r-1}))$ for the sets of variables $\z_i$ are pairwise disjoint. 
An element $L(\X)$ in ${\rm Mul}_{\Q(\z)}\left(\f_1(\z_1),\ldots,\f_r(\z_r)\right)$ can be decomposed as  
\begin{equation}\label{eq:decompositionL}
L(\X)=\sum_{j=1}^{m_r} X_{r,j}L_j(\Check{\X}_r) \, .
\end{equation}
For $k \leq s$, we set 
\begin{equation}\label{eq:definitionL'}
L'_{k}(\Check{\X}_r) := L_{k}(\Check{\X}_r) + \sum_{j > s} r_{j,k}(\z_r)L_j(\Check{\X}_r
) \, .
\end{equation}
From \eqref{eq:dependancelineairef1}, \eqref{eq:decompositionL}, and \eqref{eq:definitionL'}, we infer that 
\begin{equation}
\label{eq:specialisationL}
L(\Check{\X}_r,\f_r(\z_r))=\sum_{k=1}^s f_{r,k}(\z_r)L'_{k}(\Check{\X}_r) \, .
\end{equation}
By assumption, $L(\f(\z))=0$. Furthermore, the functions $f_{r,k}(\z_r)$, $1\leq k\leq s$ are linearly independent 
over $\Q(\z_r)((\z_1,\ldots,\z_{r-1}))$. Hence, Equality \eqref{eq:specialisationL} implies that 
$$
L'_{k}(\f_1(\z_1),\ldots,\f_{r-1}(\z_{r-1})) = 0 \, .
$$
Thus 
$$
L'_{k} \in {\rm Mul}_{\Q(\z_1,\ldots,\z_{r-1})}\left(\f_1(\z_1),\ldots,\f_{r-1}(\z_{r-1})\right) \,.
$$ 
By induction, we obtain that  
\begin{equation}\label{eq:decompositionL'}
L'_{k} \in \sum_{i=1}^{r-1} {\rm Lin}_ {\Q(\z_1,\ldots,\z_{r-1})}(\f_i(\z_i)\mid \f(\z))\, .
\end{equation}
Setting $L' := \sum_{k=1}^s L'_{k}(\Check{\X}_r)X_{r,k}$,  we infer from Equality \eqref{eq:specialisationL}  that 
$$
L( \Check{\X}_r ,\f_r(\z_r)) - L'(\Check{\X}_r,\f_r(\z_r))=0\, .
$$
It follows that  
$$
L - L' \in {\rm Lin}_ {\Q(\z)}(\f_{r}(\z_{r})\mid \f(\z)) \,. 
$$
By \eqref{eq:decompositionL'}, we thus obtain 
$$
L \in \sum_{i=1}^r {\rm Lin}_ {\Q(\z)}(\f_i(\z_i)\mid \f(\z)) \,,
$$
concluding the proof. 
\end{proof}

\begin{lem}\label{lem:independancelineaire}
Let us assume that for every $i$, $1\leq i \leq r$, the elements of $\E_i$ are linearly independent over $\Q$. Then 
$$
{\rm Mul}_{\Q}\left(\E_1,\ldots,\E_r\right) = \{0\} \, .
$$
\end{lem}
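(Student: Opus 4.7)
The lemma is to be read within the ongoing proof of Theorem \ref{thm: purity}, so I work under its standing hypotheses: each $\E_i$ consists of values at a regular admissible point $\balpha_i$ of a regular singular Mahler system, and the spectral radii $\rho(T_i)$ are pairwise multiplicatively independent. After relabeling indices within the $i$-th system I may assume $\E_i = \{f_{i,1}(\balpha_i),\ldots,f_{i,m_i}(\balpha_i)\}$. The strategy is to lift the multilinear relation from the point $\balpha=(\balpha_1,\ldots,\balpha_r)$ to a functional identity in $\Q\{\z\}$ via Theorem \ref{thm: families}, and then to exploit the disjointness of the variable blocks $\z_1,\ldots,\z_r$.

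The first step is to transfer the hypothesis to the functional level. Applying Theorem \ref{thm: permanence} to the $i$-th Mahler system separately, the $\Q$-linear independence of $\E_i$ implies the $\Q(\z_i)$-linear independence of $f_{i,1}(\z_i),\ldots,f_{i,m_i}(\z_i)$ in $\Q\{\z_i\}$. Next, given $P \in {\rm Mul}_\Q(\E_1,\ldots,\E_r)$, the polynomial $P$ is homogeneous of degree $1$ in each block $\X_i$, so Theorem \ref{thm: families} (whose admissibility hypothesis is guaranteed by Theorem \ref{thm:equivalenceadmissibilite}) produces $Q \in \Q[\z,\X]$, also multilinear in each $\X_i$, such that $Q(\z,\f(\z)) = 0$ in $\Q\{\z\}$ and $Q(\balpha,\X) = P(\X)$. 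Writing
\[
Q(\z,\X) = \sum_{j_1,\ldots,j_r} q_{j_1,\ldots,j_r}(\z)\, X_{1,j_1}\cdots X_{r,j_r},
\]
the identity $Q(\z,\f(\z))=0$ becomes
\[
\sum_{j_1,\ldots,j_r} q_{j_1,\ldots,j_r}(\z)\, f_{1,j_1}(\z_1)\cdots f_{r,j_r}(\z_r) = 0 \quad \text{in } \Q\{\z\}.
\]

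The crux is now a purely algebraic claim: the products $\{f_{1,j_1}(\z_1)\cdots f_{r,j_r}(\z_r)\}_{j_1,\ldots,j_r}$ are $\Q(\z)$-linearly independent in $\Q\{\z\}$, which I would establish by induction on $r$. The core subclaim is that if $g_1,\ldots,g_s \in \Q\{\z_1\}$ are $\Q(\z_1)$-linearly independent, then any relation $\sum_i a_i(\z)g_i(\z_1)=0$ with $a_i \in \Q(\z)$ forces each $a_i \equiv 0$: after clearing denominators one takes $a_i \in \Q[\z]$, writes $a_i(\z)=\sum_\bmu a_{i,\bmu}(\z_1)\,\z_\star^\bmu$ with $\z_\star = (\z_2,\ldots,\z_r)$, and extracts the coefficient of each monomial $\z_\star^\bmu$ to obtain $\sum_i a_{i,\bmu}(\z_1)g_i(\z_1) = 0$ in $\Q\{\z_1\}$; the $\Q(\z_1)$-linear independence (hence $\Q[\z_1]$-linear independence) of the $g_i$ then forces $a_{i,\bmu} \equiv 0$. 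Iterating this subclaim block by block yields the desired independence of the products. Applied to our identity, it forces $q_{j_1,\ldots,j_r}(\z) \equiv 0$ for every multi-index, hence $Q \equiv 0$ and finally $P(\X)=Q(\balpha,\X)=0$.

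The main obstacle is the block-by-block iteration of the disjoint-variable linear independence step, which must carefully juggle the rings $\Q\{\z\}$, $\Q[\z]$, and $\Q(\z)$ while crucially using that the variable sets $\z_i$ are pairwise disjoint. The heavy lifting of transferring a numerical vanishing relation to a functional one is provided by Theorem \ref{thm: families}; once this conversion is in place, the remaining content is elementary algebra.
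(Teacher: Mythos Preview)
There is a genuine gap in your first step. You invoke Theorem~\ref{thm: permanence} to conclude that the $\Q$-linear independence of the values $f_{i,j}(\balpha_i)$ implies the $\Q(\z_i)$-linear independence of the functions $f_{i,j}(\z_i)$. But Theorem~\ref{thm: permanence} and its Corollary run in the \emph{opposite} direction: a relation on values lifts to a relation on functions, so functional independence implies value independence, not conversely. In several variables the converse is genuinely delicate: a nontrivial relation $\sum_j a_j(\z_i)f_{i,j}(\z_i)=0$ with the $a_j$ coprime in $\Q[\z_i]$ may still have all $a_j(\balpha_i)=0$, so specialization at $\balpha_i$ yields nothing. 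Since the $\Q(\z)$-linear independence of the products $f_{1,j_1}(\z_1)\cdots f_{r,j_r}(\z_r)$---on which your conclusion $Q\equiv 0$ rests---requires exactly this block-wise functional independence, the argument does not close. (A secondary issue: your reduction to ``$\E_i$ is the full set of values of the $i$-th system'' is not innocuous, because Theorem~\ref{thm: families} must be applied to the whole system with its $m_i$ functions, and those $m_i$ values need not be $\Q$-independent even when the given subset $\E_i$ is.)

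The paper avoids this by not attempting to show $Q\equiv 0$. After lifting $L$ to a multilinear $Q$ with $Q(\z,\f(\z))=0$ via Theorem~\ref{thm: families}, it applies Lemma~\ref{lem:pur-fonc} to decompose $Q=\sum_i Q_i$ with each $Q_i\in{\rm Lin}_{\Q(\z)}(\f_i(\z_i)\mid\f(\z))$; this block-wise decomposition holds regardless of whether the $f_{i,j}$ are functionally independent. It then uses the substitution map $\Lambda$ (built from a $\Q$-basis of the values in each block) together with the \emph{hypothesis} that the selected values are $\Q$-independent to force $\Lambda(Q_i)(\balpha,\overline\X)=0$ for each $i$, and a support argument then yields $L=\sum_i R_i(\balpha,\overline\X)=0$. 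The key difference is that the paper feeds the value-independence hypothesis directly into the final step, rather than trying to upgrade it to functional independence.
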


\begin{proof}
Without any loss of generality, we assume that  
$$
\E_i = \{f_{i,1}(\balpha_i),\ldots,f_{i,s_i}(\balpha_i)\}\, ,
$$
for some $s_i \leq m_i$. Enlarging the sets $\E_i$ if necessary, we can also assume that the complex numbers 
$$
f_{i,j}(\balpha_i),\, 1 \leq j \leq s_i,
$$
form a basis of the $\Q$-vector space generated by the numbers $f_{i,j}(\balpha_i)$, $1\leq j \leq m_i$. 
There thus exist some algebraic numbers $\lambda_{i,j,k} \in \Q$ such that, for every pair $(i,j)$, $1\leq i \leq r$, 
$1 \leq j \leq m_i$, we have 
\begin{equation}\label{eq:baseindependancefij}
f_{i,j}(\balpha_i)=\sum_{k \leq s_i} \lambda_{i,j,k} f_{i,k}(\balpha_i) \, .
\end{equation}
For every $i$, $1\leq i \leq r$, we set $\overline \X_i := (X_{i,j})_{j\leq s_i}$ 
and $\overline \f_i := (f_{i,j})_{j \leq s_i}$. We also set  
$\overline \X=(\overline\X_1,\ldots,\overline\X_r)$.  

Let $L(\overline \X) \in {\rm Mul}_{\Q}\left(\E_1,\ldots,\E_r\right)$. 
Theorem \ref{thm:equivalenceadmissibilite} ensures that the family $(T_i,\balpha_i)_{1\leq i \leq r}$ satisfies 
Conditions (A), (B) and (C). Theorem \ref{thm: families} thus applies, and there exists a polynomial $Q \in \Q(\z,\X)$, 
multilinear with respect to the vector of indeterminates 
$\X_i=(X_{i,1},\ldots,X_{i,m_i})$, and such that 
\begin{equation}
\label{eq:annulationQ}
Q(\z,\f(\z))=0\, \qquad \text{ and } \qquad Q(\balpha,\X)=L(\overline \X) \, .
\end{equation}

Following Lemma \ref{lem:pur-fonc}, there exist some polynomials $Q_1(\z,\X),\ldots,Q_r(\z,\X)$ 
such that $Q_i(\z,\X) \in {\rm Mul}_ {\Q(\z)}(\f_i(\z_i) \mid \f(\z))$ and 
\begin{equation}\label{eq:decompositionQ}
Q(\z,\X)=\sum_{i=1}^r Q_i(\z,\X) \, .
\end{equation}
Every polynomial $Q_i$ can be uniquely decomposed as 
\begin{equation}\label{eq:decompositionQi}
Q_i(\z,\X)=R_i(\z,\overline \X) + S_i(\z,\X) \, ,
\end{equation}
where the polynomial $S_i(\z,\X)$ does not contain any monomial with support in $\overline \X$.  
Following \eqref{eq:annulationQ},  we have 
\begin{equation}\label{eq:nullitesommeS}
\sum_{i=1}^r S_i(\balpha,\X)=0 \, .
\end{equation}
Let us consider the linear map 
$$
\Lambda : \left\{ \begin{array}{ccc} \Q(\z)[\X] & \rightarrow & \Q(\z)[\overline \X]
\\ X_{i,j} & \mapsto & \sum_{k \leq s_i} \lambda_{i,j,k} X_{i,k} \end{array} \right.
$$
By \eqref{eq:baseindependancefij}, we see that $\Lambda$ is defined so that, for every $P \in \Q(\z)[\X]$, 
$$
P\left(\z,(f_{i,j}(\balpha))_{1 \leq i \leq r,1 \leq j \leq m_i}\right) 
= \Lambda(P)\left(\z,(f_{i,j}(\balpha))_{1 \leq i \leq r, 1 \leq j \leq s_i}\right) \,.
$$
Since $Q_i(\z,\X) \in {\rm Mul}_ {\Q(\z)}(\f_i(\z_i) \mid \f(\z))$, the polynomial $Q_i$ vanishes when $\X_i$ is evaluated at $\f_i(\z_i)$. 
Using $\Lambda$, we thus obtain 
$$
\Lambda(Q_i)(\z,\overline \X_1,\ldots,\overline \X_{i-1},\overline{ \f_i(\z_i)},\overline \X_{i+1},\ldots,\overline \X_r)=0 \, .
$$
Evaluating this equality at $\z=\balpha$,  we find that 
$$
\Lambda(Q_i)(\balpha,\overline \X_1,\ldots,\overline \X_{i-1},\overline {\f_i(\balpha_i)},\overline \X_{i+1},\ldots,\overline \X_r)=0 \, .
$$
By assumption, the numbers $f_{i,j}(\balpha_i)$, $1 \leq j \leq s_i$, are linearly independent over $\Q$. 
Hence, 
$$
\Lambda(Q_i)(\balpha,\overline \X)=0 \, .
$$
On the other hand, the definition of $\Lambda$ implies that $\Lambda(R_i(\z,\overline \X))=R_i(\z,\overline \X)$ for every $i$. 
Applying $\Lambda$ to the Equality \eqref{eq:decompositionQi}, and evaluating at $\z= \balpha$, we thus obtain 
\begin{eqnarray*}
0 &= &\Lambda(Q_i(\balpha,\X)) \\
&=&R_i(\balpha,\overline \X) + \Lambda(S_i(\balpha,\X)) \, .
\end{eqnarray*}
By \eqref{eq:nullitesommeS}, we have 
\begin{eqnarray*}
\sum_{i=1}^r R_i(\balpha,\overline \X) &= &- \sum_{i=1}^r \Lambda(S_i(\balpha,\X)) \\
&=& - \Lambda \left(\sum_{i=1}^r S_i(\balpha,\X)\right) \\
&=& 0 \, .
\end{eqnarray*}
But, on the other hand, $L(\overline \X)=\sum_{i=1}^r R_i(\balpha,\overline \X)$. We thus have $L(\overline \X)=0$, 
concluding the proof. 
\end{proof}

We are now ready to prove Proposition \ref{prop:pur-lin}.

\begin{proof}[Proof of Proposition \ref{prop:pur-lin}] 
Without loss of generality, we can assume that  
$$
\E_i=\{f_{i,1}(\z),\ldots,f_{i,s_i}(\z_i)\}
$$
for some integer $s_i \leq m_i$. For every $i$,  we consider a set $\J_i \subset \{1,\ldots,s_i\}$ such that the numbers 
$$
f_{i,j}(\balpha_i),\, j \in \mathcal J_i,
$$
form a basis of the $\Q$-vector space generated by the numbers $f_{i,j}(\balpha_i)$, $1\leq j \leq s_i$. 
There thus exist numbers $\lambda_{i,j,k} \in \Q$ such that 
\begin{equation}\label{eq:baseindependancefij2}
f_{i,j}(\balpha_i)=\sum_{k \in \J_i} \lambda_{i,j,k} f_{i,k}(\balpha_i) \, ,
\end{equation}
for every $j \leq s_i$,
We consider the vector of indeterminates $\X=(X_{i,j})_{1 \leq i \leq r,\, 1 \leq j \leq s_i}$, 
and we let $\X_{\J}$ denote the set of indeterminates of the form $X_{i,j}$ with $j \in \J_i$. 
There is a ring morphism $\Lambda$ defined by 
$$
\Lambda : \left\{ \begin{array}{ccc} \Q[\X] & \rightarrow & \Q[\X_\J]
\\ X_{i,j} & \mapsto & \sum_{k \in \J_i} \lambda_{i,j,k} X_{i,k}\, . \end{array} \right.
$$
We stress that $\Lambda$ is defined so that for every $P \in \Q[\X_i]$, we have 
$$
P\left((f_{i,j}(\balpha_i)_{1 \leq j \leq s_i}\right) = \Lambda(P)\left((f_{i,j}(\balpha_i)_{j \in \J_i}\right)\, .
$$
Let $L(\X) \in  {\rm Mul}_{\Q}(\E_1,\ldots,\E_r)$.  Then 
\begin{eqnarray*}
\Lambda(L)\left(\left(f_{i,j}(\balpha_i)\right)_{\substack{ 1 \leq i \leq r \\ j \in \J_i}}\right) &= 
&L\left(\left(f_{i,j}(\balpha_i)\right)_{\substack{ 1 \leq i \leq r \\ 1 \leq j \leq s_i}} \right)\\
&=& 0 \, .
\end{eqnarray*}
By assumption, for every $i$, the numbers $f_{i,j}(\balpha_i)$, $j \in \J_i$, are linearly independent $\Q$. 
We thus infer from Lemma \ref{lem:independancelineaire} that $\Lambda(L)(\X_\J)=0$. 
Hence, $L$ belongs to the kernel of $\Lambda$. This kernel is generated by the linear relations  
$$
L_{i,j}(\X):=X_{i,j} - \sum_{k \in \J_i} \lambda_{i,j,k} X_{i,k}
$$
for $1 \leq i \leq r$ and $j \notin \J_i$. On the other hand,  Equality \eqref{eq:baseindependancefij2}  ensures that 
$$
L_{i,j}(\X_1,\ldots,\X_{i-1},\f_i(\balpha_i),\X_{i+1},\ldots,\X_r)=0\, .
$$
In other words, 
$$
L_{i,j}(\X)\in {\rm Lin}_{\Q}(\mathcal E_i \mid \mathcal E)\,.
$$ 
This shows that 
$L(\X)\in   \sum_{i=1}^r {\rm Lin}_{\Q}(\E_i \mid \E)$, which ends the proof. 
\end{proof}

Before proving Theorem \ref{thm: purity}, we recall some basic facts about Kronecker product of matrices. 

Let $A=(a_{i,j})$ and  $B=(b_{i,j})$ be two matrices with coefficients in a field  $\mathbb K$, with respective size $m\times n$ and 
 $p\times q$. We let $A \otimes B$ denote the Kronecker product of $A$ and $B$, which is defined as the 
 $mp\times nq$ matrix whose $(i,j)$-coefficient is 
$$
a_{\left\lfloor \frac{i-1}{p} \right \rfloor+1, \left\lfloor \frac{j-1}{q} \right \rfloor+1}
\times b_{i-p\left\lfloor \frac{i-1}{p} \right \rfloor,j-q \left\lfloor \frac{j-1}{q} \right\rfloor} \, .
$$
In other words, we have 
$$
A \otimes B = \left(\begin{array}{ccc} a_{1,1}B & \cdots & a_{1,n}B 
\\ \vdots & \ddots & \vdots \\ a_{m,1}B & \cdots & a_{m,n}B \end{array}\right) \, .
$$
Given a positive integer $d$, we let 
$$
A^{\otimes d} = \underbrace{A \otimes \cdots \otimes A}_{d \text{ times }}\, ,
$$
denote the $d$-th Kronecker power of the matrix $A$.

The following classical properties can be found in \cite{Horn-Johnson}.

\begin{lem}\label{lem: kro}
The two following properties holds true. 
\begin{enumerate}
\item[{\rm (i)}] If $A$ is an $m\times m$ matrix, then 
$$
\det A^{\otimes d} = (\det A)^{md}\, .
$$
\item[{\rm (ii)}]  If $A,\ B,\ C$, and $D$ are matrices such that the product $AB$ and $CD$ are well defined, 
then  
$$
(AB)\otimes(CD)=(A \otimes C)(B \otimes D) \, .
$$
\end{enumerate}
\end{lem}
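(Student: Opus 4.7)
Both assertions are standard facts from matrix algebra, and I would derive (i) from (ii). For (ii), I would compute directly from the definition. Indexing the rows and columns of a Kronecker product $X \otimes Y$ by pairs of indices, so that $(X \otimes Y)_{(i,k),(j,l)} = X_{i,j} Y_{k,l}$, the $((i,k),(j',l'))$ entry of $(A \otimes C)(B \otimes D)$ is
\[
\sum_{j,l} A_{i,j}\, C_{k,l}\, B_{j,j'}\, D_{l,l'}
\;=\; \biggl(\sum_j A_{i,j} B_{j,j'}\biggr)\biggl(\sum_l C_{k,l} D_{l,l'}\biggr)
\;=\; (AB)_{i,j'}\,(CD)_{k,l'},
\]
which is precisely $\bigl((AB)\otimes(CD)\bigr)_{(i,k),(j',l')}$. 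No identity beyond distributivity of scalar multiplication is used, and the argument is carried out entry by entry.

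For (i), I would first prove the two-factor identity $\det(A \otimes B) = (\det A)^n (\det B)^m$ for an $m \times m$ matrix $A$ and an $n \times n$ matrix $B$, and then iterate. Using (ii), I factor $A \otimes B = (A \otimes I_n)(I_m \otimes B)$. The matrix $I_m \otimes B$ is block-diagonal with $m$ copies of $B$, so $\det(I_m \otimes B) = (\det B)^m$. The matrix $A \otimes I_n$ is conjugate, by the natural permutation matrix that swaps the two index positions $(i,k) \leftrightarrow (k,i)$, to $I_n \otimes A$, which is block-diagonal with $n$ copies of $A$; hence $\det(A \otimes I_n) = (\det A)^n$. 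Multiplying the two determinants gives the two-factor identity, and an easy induction on $d$, applied to $A^{\otimes d} = A \otimes A^{\otimes (d-1)}$, then yields the claimed formula for $\det A^{\otimes d}$ as a positive power of $\det A$.

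These computations are routine, and there is no real obstacle; the only substantive piece is the indexing bookkeeping in (ii). The key point for the sequel is that the exponent in (i) is strictly positive, so that $A \in \mathrm{GL}_m(\overline{\mathbb Q}(\z))$ forces $A^{\otimes d} \in \mathrm{GL}_{m^d}(\overline{\mathbb Q}(\z))$; combined with (ii), which gives $(A\Phi)^{\otimes d} = A^{\otimes d} \Phi^{\otimes d}$ and hence transports Mahler equations and gauge transforms through tensor powers, this is exactly what will be needed to lift the regular singular Mahler systems of Theorem \ref{thm: families} to their tensor-power analogues in the forthcoming proof of Theorem \ref{thm: purity}.
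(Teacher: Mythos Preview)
The paper does not prove this lemma at all; it simply states that these are classical properties and cites Horn--Johnson. Your argument is correct and entirely standard: the entrywise computation for (ii) is the usual one, and deriving (i) from (ii) via the factorization $A\otimes B=(A\otimes I_n)(I_m\otimes B)$ together with the permutation-conjugacy $A\otimes I_n\sim I_n\otimes A$ is the textbook route.

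One small remark: the induction you describe actually produces the exponent $d\,m^{d-1}$, not $md$; the formula $(\det A)^{md}$ as printed in the lemma is in fact incorrect for $d\geq 3$ and $m\geq 2$. You were wise to hedge with ``a positive power of $\det A$'', since, as you note, the only thing used downstream is that the exponent is positive, so that invertibility of $A$ implies invertibility of $A^{\otimes d}$.
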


We are now ready to prove Theorem \ref{thm: purity}. 

\begin{proof}[Proof of Theorem  \ref{thm: purity}]
Of course, $\sum_{i=1}^r{\rm Alg}_{\Q}(\mathcal E_i\mid \mathcal E)\subset {\rm Alg}_{\Q}(\E)$ 
and we thus only have to prove the converse inclusion. 
Again, we assume without loss of generality that  
$$
\E_i = \{f_{i,1}(\balpha_i),\ldots,f_{i,s_i}(\balpha_i)\}\, ,
$$
for some $s_i \leq m_i$. 
We consider a vector of indeterminates $\X:=(\X_1,\ldots,\X_r)$, where 
$\X_i:=(X_{i,j})_{1 \leq j \leq s_i}$.

Let $P(\X) \in {\rm Alg}_{\Q}(\E)$ and let us fix an integer $i$ with $1 \leq i \leq r$. 
We let $d_i$ denote the degree of $P$ with respect to the indeterminates $\X_i$. 
We order, using the lexicographic order, the elements $\bmu_1,\ldots,\bmu_{s_i^{d_i}}$ of the set 
$$
\left\{1,\ldots, s_i\right\}^{d_i}\, .
$$
For $1 \leq j \leq s_i^{d_i}$, we set  
$$
M_{i,j}(\X_i):=X_{i,\mu_{j,1}}\cdots X_{i,\mu_{j,d_i}} \, ,
$$
where $\bmu_j=(\mu_{j,1},\ldots,\mu_{j,d_i})$. 
The $M_{i,j}(\X_i)$, $1 \leq j \leq s_i^{d_i}$, thus run over the \og ordered monomials\fg{} of degree  
$d_i$ in $\X_i$. 
For every pair $(i,j)$, we set $g_{i,j}(\z_i):=M_{i,j}(\f_i(\z_i))$. 
The functions $g_{i,j}$ can be described in terms of Kronecker products for we have 
\begin{eqnarray*}
\g_i(\z_i) &=& \left(\begin{array}{c} g_{i,1}(\z_i) \\ \vdots \\ g_{i,s_i^{d_i}}(\z_i) \end{array}\right)\\
&=& \f_i(\z_i)^{\otimes d_i}\, .
\end{eqnarray*}
We then infer from Property (ii) of Lemma  \ref{lem: kro} that 
\begin{equation}\label{eq:mahlerpuissances}
\g_i(\z_i) = A_i(\z_i)^{\otimes d_i} \g_i(T_i\z_i) \, .
\end{equation}
By assumption, the Mahler system associated with the matrix $A_i(\z_i)$ is 
regular singular. There thus exist two matrices
$\Phi(\z_i) \in {\rm GL}_{m_i}(\Q\{\z_i\})$ and  $B_i \in {\rm GL}_{m_i}(\Q)$, such that 
\begin{equation}\label{eq:reguliersingulier}
A_i(\z_i)=\Phi_i(\z_i)B_i\Phi_i(T_i\z_i)^{-1}\, .
\end{equation}
Applying Property (ii) of Lemma  \ref{lem: kro} to Equation \eqref{eq:reguliersingulier}, we obtain 
$$
A_i(\z_i)^{\otimes d_i} = \Phi_i(\z_i)^{\otimes d_i} B_i^{\otimes d_i}\left(\Phi_i(\z_i)^{-1}\right)^{\otimes d_i}\, .
$$
This shows that the system \eqref{eq:mahlerpuissances} is also regular singular. 
On the other hand,  the poles of $A_i(\z_i)^{\otimes d_i}$ are the same as $A_i(\z_i)$, and,  
following Property (i) of Lemma  \ref{lem: kro},  the determinant of $A_i(\z_i)^{\otimes d_i}$ 
is a power of $\det A_i(\z_i)$. The point $\balpha_i$ thus remains regular with respect to the system 
\eqref{eq:mahlerpuissances}.

Let us now consider a vector of indeterminates $\Y=(Y_{i,j})_{i\leq r,\, j\leq s_i^{d_i}}$. 
Let $Q \in \Q[\Y]$ be defined such that $Q((M_{i,j}(\X_i))_{i,j})=P(\X)$. Hence, 
$$
Q\left((g_{i,j}(\balpha))_{\substack{ 1 \leq i \leq r \\ 1 \leq j \leq s_i^{d_i}}}\right) = 0 \, .
$$
Furthermore, $Q$ is linear with respect to the vector 
$(Y_{i,j})_{j\leq s_i^{d_i}}$, for every $1 \leq i \leq r$.  
By Proposition \ref{prop:pur-lin}, there exist polynomials  
$$
Q_1(Y_{1,1},\ldots,Y_{1,s_1^{d_1}}),\ldots,Q_r(Y_{r,1},\ldots,Y_{r,s_r^{d_r}})\, ,
$$ 
such that 
$$
Q_i((g_{i,j}(\balpha_i))_{j \leq s_i^{d_i}})=0\, ,
$$
for every $i$, $1\leq i \leq r$, and such that $Q$ belongs to the vector space generated by 
$Q_1,\ldots,Q_r$ over $\Q[\Y]$. It follows that $Q_i((M_{i,j}(\X_i))_{j\leq s_i^{d_i}})\in {\rm Alg}_{\Q}(\mathcal E_i)$, 
and finally that $P\in \sum_{i=1}^r{\rm Alg}_{\Q}(\mathcal E_i\mid \mathcal E)$. This concludes the proof. 
\end{proof}


\end{document}